\newif\ifpdf
\newcommand{\R}{\mathbb R}
\newcommand{\C}{\mathbb C}
\DeclareMathOperator{\ind}{Ind}
\newcommand{\D}{\Delta}
\newcommand{\G}{\Gamma}
\DeclareMathOperator{\di}{d}
\DeclareMathOperator{\dm}{dm}
\newcommand{\dr}{\dm_{Rect}}
\newcommand{\la}{\lambda}
\DeclareMathOperator{\IPSR}{IPSR}
\DeclareMathOperator{\RDP}{RDP}
\newcommand{\ld}{\lg_2}
\newtheorem{propositionn}{Proposition}
\newtheorem{Corollaryn}{Corollary}
\newtheorem{lemman}{Lemma}
\newtheorem{Theorem}{Theorem}
\begin{document}
%\begin{center}
%{\large A Geometric Algorithm for Winding Number Computation,
%with Complexity  Analysis }
%
%\vspace{3cm}
%
%\textit{Juan-Luis García Zapata}, 
%
%Departamento de Matemáticas
%
%(corresponding author)
%
%\href{mailto:jgzapata@unex.es}{\uline{\texttt{jgzapata@unex.es}}}
%
%Tf: +34927 257445
%
%Escuela Politécnica, Universidad de Extremadura
%
%Avda. de la Universidad s/n
%
%10071, Cáceres (Spain)
%
%\vspace{3cm}
% \textit{Juan Carlos Díaz Martín,} 
% 
% Departamento de Tecnología de los
%Computadores 
%
%\href{mailto:juancarl@unex.es}{{\texttt{\uline{juancarl@unex.es}}}} 
% 
%Escuela Politénica, Universidad de Extremadura
%
%Avda. de la Universidad s/n
%
%10071, Cáceres (Spain)
%
%
%
%\end{center}

\newpage

\begin{center}
{\bf A Geometrical Root Finding Method for Polynomials, with Complexity Analysis}

\textit{Juan-Luis García Zapata}, Departamento de Matemáticas

 \textit{Juan Carlos Díaz Martín}, Departamento de  Tecnología de los
Computadores y las comunicaciones 

Escuela Politécnica, Universidad de Extremadura

10003, Cáceres (Spain)
\end{center}

\begin{abstract}

The usual methods for root finding of polynomials are based on the iteration of a numerical formula for improvement of successive estimations. The unpredictable nature of the iterations prevents to search roots inside a pre-specified region of complex plane. This work describes a root finding method that overcomes this disadvantage. It is based on the winding number of plane curves, a geometric construct that is related to the number of zeros of each polynomial. The method can be restrained to search inside a pre-specified region. It can be easily parallelized.
%, and it does not need high precision in the intermediate computations.
 Its  correctness is formally proven, and its computational complexity is also analyzed.
\end{abstract}

{\bf Keywords:} Root Finding, Winding Number, Quad\-tree Algorithm, Contour Integration.

\section{Introduction: Root Finding as a Geometric Problem}
In this section we first compare the iterative methods of root finding with the geometric methods, then we display a graphic example of the disadvantages of the former ones, and finally expose the practical motivation of the geometric method shown in this paper.

The methods to find polynomial roots can be roughly classified as {\em iterative} or as {\em geometric}. The iterative methods (IM) are based on a sequence of error estimation and correction, which, in well-defined conditions, leads to a complex point as close to a root as needed. The methods of this type are fast (better than linear convergence, in general) and its analysis (that is, the  proof of its correction, and the determination of the resources needed) relies on relatively standard numerical techniques (\cite{ralston}). They are the common approach to the polynomial root finding problem, named after Newton, Müller, Legendre, etc. For example, the procedure built in mathematical packages (as LAPACK \cite{lapack} or MATLAB \cite{dongarra}) is an IM which is based on the eigenvectors of the companion matrix of the polynomial (see \cite{fortune}). An IM usually converges rapidly for most
polynomials of moderate degree (less than about 50 \cite{pan}). However, even for
these degrees, the IM are inadequate for certain classes
of polynomials (as those with clusters or with multiple roots, depending
on the method), or for specific polynomials that show ill-conditioning
(like the Wilkinson example \cite{wilkinson}). To cope with these issues, it is necessary to
apply heuristic rules in case of difficulties (to choose another initial
approximations, or change the method applied, or use
high precision arithmetic \cite{pan}). These matters hinder the algorithmic complexity analysis of IM. For example, for the Newton method there is not such complexity analysis (see \cite{traub}, \cite{forster}). However, in practice most applications of root finding rely on IM, with an proper set of heuristics. 

Another disadvantage of IM is that the position of the found root is not related to the initial estimation  \cite{blum}. This can be pictured in a figure of basins of attraction. The {\em basin of attraction} of a root (for the Newton method) is the set of complex values $z$ such that the method converges to that root, if we start with initial value $z$. It is said that a root {\em attracts} the points that converge to it. The basins of two distinct roots are disjoint subsets of the plane. We depict the basins of the roots of a 31-degree polynomial in figure \ref{basin}. It is the plane region $|\text{Re}(z)| < 1.2,  |\text{Im}(z)| < 1.2$. The roots are marked with stars. The Newton method is applied one time for every pixel of the figure. The initial root estimation is the complex value corresponding to the position of this pixel. Each pixel is colored with a gray level, according to the root to which it converges. Hence each basin is colored with a different gray level.

\begin{figure}[h!]
%  \centering \includegraphics[width=\textwidth]{}
  \centering \includegraphics[width=\textwidth, bb= 10 10 200 165]{}
  \parbox{.9\textwidth}{\caption{Basins of attraction for the Newton method applied to a 31 degree polynomial. The stars mark the roots, and the gray levels its basins.}}
  \label{basin}
\end{figure}

In the figure, the boundaries between basins show points whose color is different from that of its neighborhoods. One such point converges to a root located far apart from the roots corresponding to nearby points. Then the root to which a point converges is unpredictable. This {\em non-local convergence} is characteristic of IM. It is a phenomenon of sensitive dependence of initial conditions, and it cannot be avoided with any level of precision \cite{kalantari}.  

In this setting, to find for instance the roots near the unit circumference with common IM, it is necessary to find all roots, and pick up those of interest. To avoid the repeated finding of the same root,  each found root is removed from the polynomial by dividing it by a linear factor (a so called deflation step). The deflation steps require high precision arithmetic, which is computationally intensive \cite{ralston}.

In contrast with IM, others methods of root finding are based on geometric notions about the distribution of roots in the complex plane, as its relative position, or if they are contained inside a given circle, etc. Traditionally, geometric methods (GM) are applied in a first stage of root finding, as ``rules of thumb'' to separate intervals on the real line (or to separate regions on the complex plane) each containing a desired root. On a second stage, an iterative method (that now in each separated interval supposedly converges) approximates the root in every interval. Tools used for root separation in the real line include the criterion of Cauchy for root signs, the bisection method (based on Bolzano's theorem), or the Sturm successions \cite{ralston}. Examples of root separation tools in the complex plane are Lehmer-Schur \cite{lehmer}, Weyl \cite{henrici,yakoubsohn}, or the process of Graeffe \cite{bini}. The GM are more difficult to implement than iterative methods, requiring data types for geometric objects and complex variables, and tree-search or backtracking procedures for control flow.
Notwithstanding, the methods based on geometrical relations are  valid for all polynomials equally. This uniformity allows an analysis of the complexity of such methods. 
The theoretical studies of complexity have been a driving force in the development of GM (\cite{renegar}). Suppose that we require the roots of a polynomial with up to $b$ bits, that is, with a precision of $2^{-b}$. The number of multiplications needed to extract all the roots of a $n$-degree polynomial, with this precision, is $O(n^2 \log n \log b)$ using the GM of \cite{pan}, or $O(n \log n \log b)$ in \cite{neff-reif}.

Contrarily to what happens with IM, the use of deflation steps (and hence of high precision) is not necessary in GM. Besides, if the area of interest is relatively reduced, and contains few roots, a GM avoids spending computations in not desired roots. Many practical applications need methods that allow to focus the root finding on a specified complex plane region (see \cite{sitton}, \cite{vandooren} and references therein). This ability produces a computational saving with respect to IM.  

In our work \cite{garciaLPC} we face a situation where a geometric method is better than an iterative one. This advantage mainly comes from that in many cases there is not need to find all and each root of a polynomial, but only those ones satisfying certain conditions. In signal processing applications, a polynomial represents an autoregressive model (Linear Predictive Coding, or LPC) of the signal \cite{oppenheim}. The roots of this polynomial are inside the unit circle, by the stability of the model, and the roots that are closer to the circumference are related to the main frequency components of the signal. For instance the polynomial of figure \ref{basin} comes from LPC of a voice signal. We are interested in finding the complex roots of LPC polynomials that are placed near the unit circumference, up to a distance that depends on the degree of the modeling. This leads us to consider GM for this problem. In signal processing there are several IM  to find the roots of greater module, as Graeffe or Bernoulli \cite{ralston}. However, the high degree of the polynomials of interest for us, and the intricate case analysis required by these methods, are two features that obstruct its application, and  the practical methods of voice processing must use spectral analysis techniques. 

In general, the GM have a common pattern. Let us consider, for expository purpose, the bisection method for zero finding of real continuous functions. It is based on the Bolzano's theorem. This theorem says that if a continuous function $f:[a,b]\rightarrow \R$ in a interval $[a,b]$ of the real straight takes opposed signs at its ends (that is,  $f(a)\cdot f(b)<0$), then it has at least a root in the interval. The bisection method consists of using this fact recursively to build a succession of nested intervals (and hence progressively smaller) containing a real root. These are the prototype components of a GM: an {\em inclusion test}, to decide whether there is any root in a piece (either of the line or of the plane); and a {\em recursive procedure} to get progressively smaller  localizations of desired roots.

We discuss two examples for comparison: Lehmer-Schur and  Weyl methods. As the GM are based on the delimitation of regions that can contain roots, they are often called bracketing methods, by analogy with the bisection ones. The Lehmer-Schur method \cite{lehmer} is a bi-dimensional bracketing method based on an inclusion test by Schur and Cohn, which says whether a polynomial has roots inside a circle. The area of interest is covered by circles, and this criterion is applied to each one. Then, the circles that contain a root are covered by circles of lesser radius, and so on, until the required precision is reached. The recursive procedure entails certain inefficiency because it implies overlapping and repeating finding of the same roots. The bracketing method of Weyl \cite{henrici} is based on the decomposition in squares of the domain of interest. Each square is subdivided in four sub-squares, if it contains some root, and so recursively. This leads to a {\em quadtree procedure},  a search tree in which each node is a square that is linked with its four sub-squares. Weyl and others \cite{pan} proposed several inclusion tests  to see if there are roots inside a square. So, ``method of Weyl'' is a generic name of several GM, with a common recursive procedure (the quadtree), but with different inclusion tests.

The theoretically optimal methods (in the sense of the computational complexity of the root finding problem) are of geometric type (\cite{renegar}, \cite{pan96}, \cite{neff-reif} \cite{schonhage}), but they are not widespread in practice. The implementation of GM  is usually enriched with some iterative stages, like Newton when the quadtree search reaches a subregion with fast convergence. Other changes in practical implementations of  a GM blueprint  are related with numerical issues of the inclusion tests. In the implementations in floating-point arithmetic, that is,
with a fixed finite precision, the roundoff errors can accumulate \cite{gourdon}. This can make that a test miss a root if, for example, the test indicate the presence of a root with a zero  value and its absence with a non zero value.  However, if we switch to an implementation in arbitrary-precision arithmetic, with no roundoff errors, other drawback arises: the precision (and hence the computational resources needed) can grow indefinitely \cite{pan}.
This problems of numerical accuracy have been an obstacle to the practical use of GM. 

In this work we expose a method with a floating point implementation that avoids the precision problem noted above. It is based on an inclusion test, IPSR in figure \ref{IPSR}, that is applicable to regions of arbitrary shapes, and on a recursive procedure giving a quadtree search, RDP in figure \ref{recur}. The inclusion test uses the winding number of closed plane curves. This number is the amount of twists of the curve around the origin. We have developed \cite{garciaart1} an efficient procedure to compute the winding number, and we use it in IPSR. Besides, we give a bound of the computational cost of the test. 
 In the recursive procedure RDP, we avoid the situations that require the use of IPSR with high precision, and hence also avoid the precision problem.
We compute a bound of the cost of the full root finding method using the cost bound of the inclusion test ISPN in each node of the RDP quadtree search.

% NO ES MOMENTO DE CONTAR LO DE ILL-POSED. 
%Besides, we determine a bound of its computational cost, using a ``condition number'' similar to that used in Matrix Numerical Analysis.
%For a class of problems, the condition number of a specific problem measures how uncertainty in the data affects the uncertainty of the solution. It can be viewed as the proximity of the specific problem to an ill-posed problem in the considered class of problems (see \cite{demmel}). 
%In our case, an ill-posed problem is a curve that crosses over the origin, without a defined winding number, and the condition number of a curve is its distance to the origin (see \cite{garciaart1}). 

About the structure of the paper, section 2 below exposes the procedure of winding number computation. We give results about its applicability and cost, mainly in terms of the distance to the origin from the curve to which the procedure applies. In section 3 we adapt these results to the curves that appear in the root finding method. The cost bound in this case is more complex, in terms of the distance to every root from the curve. The section 4 describe the quadtree procedure, RDP. The recursive partition ought to be performed avoiding to cross over polynomial roots, because this cause precision problem in IPSR. The section 5 contains a cost analysis of the full root finding method.

\section{Winding Number as inclusion test}

We compute the winding number of a curve using a discrete sampling obtained by an iterative procedure. We have proved the convergence of this procedure for nonsingular curves in \cite{garciaart1}, and we also modified it in such a way that the singular curves will not cause a endless computation.  

Let us consider closed curves defined parametrically, that is, as mappings of an interval to the complex plane  $\D:[a,b] \rightarrow \C$ with $\D(a)= \D(b)$. The {\em winding number}, or {\em index}, $\ind(\D )$ of a curve  $\D:[a,b] \rightarrow \C$, is the number of complete rotations, in counterclockwise sense, of the curve around the point $(0,0)$. See figure \ref{winnum}.
 
\begin{figure}[h!]
  \centering \includegraphics[width=\textwidth]{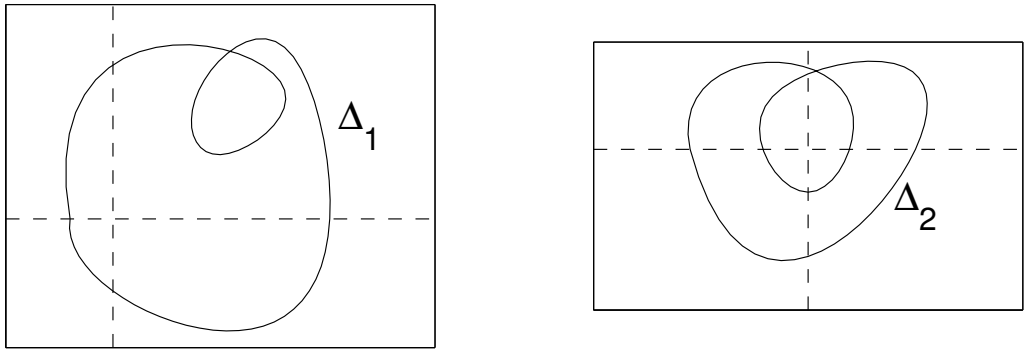}
  \parbox{.9\textwidth}{\caption{The winding numbers of the curves  $\D_1$ and  $\D_2$ are $\ind(\D_1) = 1$ and  $\ind(\D_2) = 2$}}.
  \label{winnum}
\end{figure}

\noindent As a particular case of the Cauchy formula of Complex Analysis, the winding number is equal to this line integral:

$$ \ind(\D)=\frac{1}{2\pi i}\int_\D{\frac{1}{w}dw}
$$
 
The principle of the argument \cite{henrici}  states that the number of zeros (counted with multiplicity) of an analytic function $f: \C \rightarrow \C$, $w=f(z)$, inside a region with border defined by the curve $\G,$ is equal to the winding number of the curve $\D=f(\G )$. The principle of the argument can be viewed as a bi-dimensional analogy of Bolzano's theorem, and it is in the base of several recursive methods to find the zeros of holomorphic functions \cite{henrici-search}.

It should be noted that the winding number of the curve $\D$  is not defined if $\D$  crosses over the origin $(0,0)$. In that case $\D$  is called {\em singular curve}, since the integral $\int_\D{\frac{1}{w}dw}$  does not exist. If $\D=f(\G )$, this is equivalent to that $\G$  crosses over a root of $f$.  In that case $\G$  is called singular curve with respect to $f$, the integral $\int_\D{\frac{1}{w}dw}$  does not exist, and the above theorem is not applicable.

We apply the principle of the argument to analytic functions defined by polynomials. The number of zeroes $N_0$ of a polynomial complex function $f(z)$ inside a region bounded by a curve $\G$  is the winding number of the curve $f(\G )$, that is:  $N_0=\ind(f(\G ))$. For an example, see figure \ref{winzcubo}. This can be used as an inclusion test to decide if a given region of the plane has any root, that is, $N_0 > 0$.

\begin{figure}[h!]
  \centering \includegraphics[width=\textwidth]{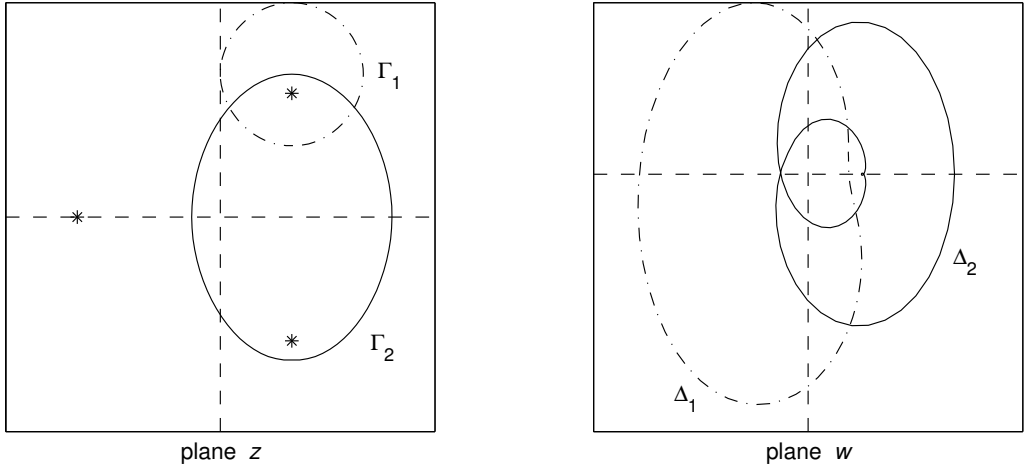}
  \parbox{.9\textwidth}{\caption{The number of roots of the polynomial $f(z)=z^3 + 1$ inside  $\G_1$ and  $\G_2$ equals the winding numbers of  $\D_1=f(\G_1)$ and $\D_2=f(\G_2)$.} \label{winzcubo}}
\end{figure}

The computation of the winding number is not an easy task. We could say that there are two schools of thought: the approaches that use the numerical integration,  as  \cite{kravanja}, and those based on computational geometry (\cite{henrici}). The first school use  the integration error estimates to ensure the correct calculation of the winding number. The second school, while producing faster algorithms, need to assume some hypothesis to prove their correctness. We follow this  school in the method of \cite{garciaart1}, and proof its correction under a mild hypothesis of Lipschitzianity. We now summarize the method and give a bound of its cost.

A curve $\D$ is defined by an continuous mapping  $\D:[a,b] \rightarrow \C$. We also call $\D$ to the set of its points, so $x\in\D$ means that there is $t\in[a,b]$ with $x=\D(t)$. Numerically, it is represented by  a polygonal approximation, that is, a discrete set of complex points, followed in certain order. Figure \ref{aprox} shows the curve $\D$   with two polygonal approximations.

\begin{figure}[h!]
  \centering \includegraphics[width=\textwidth]{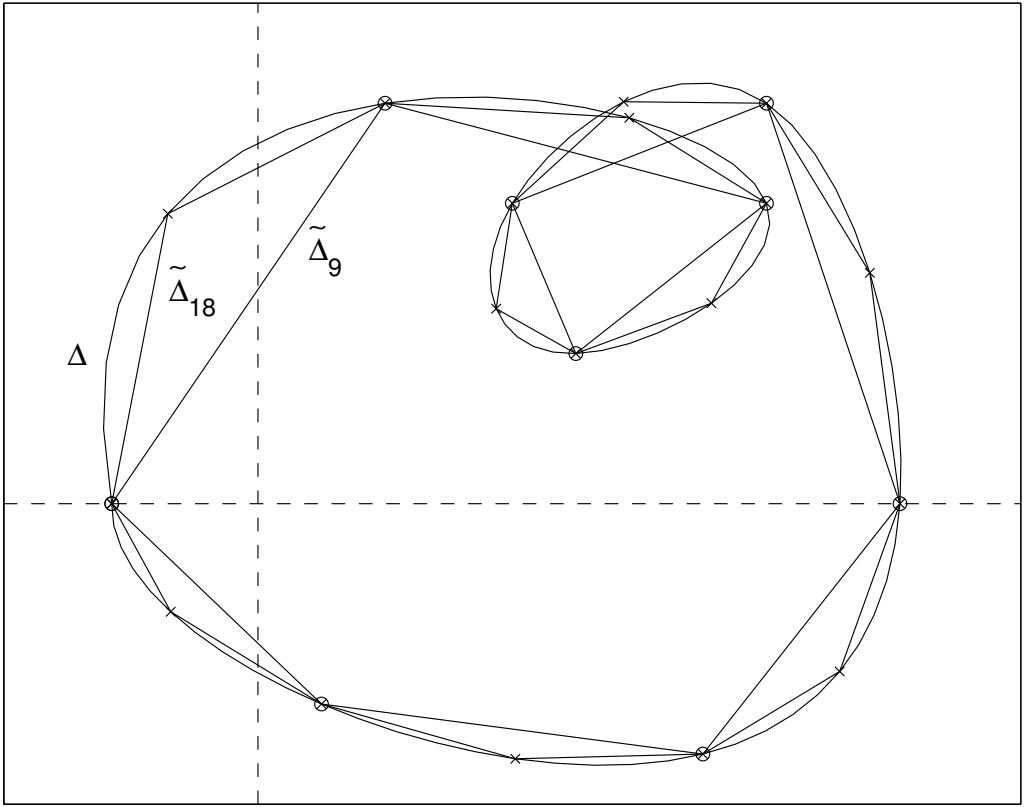}
  \parbox{.9\textwidth}{\caption{The curve  $\D$  is approximated by polygonal $\widetilde{\D_9}$ with a resolution of 9 points, and by $\widetilde{\D_{18}}$  with double resolution.}\label{aprox}}
\end{figure}

To calculate the winding number $\ind(\D )$ of a contour $\D$, we divide the complex plane in angular sectors, for example of angle ${\pi}/{4}$, like in figure \ref{adja}. There are eight such sectors, called $C_0, C_1, C_2 , \dots, C_7$, each sector the half of a quadrant.  A division in angular sectors with  angle different of  ${\pi}/{4}$ is also possible. The following method and the proof of its correction are similar in this case, provided that there are as least four sectors, and only  the constants involved must be changed.

Let us suppose that the curve $\D$  has a defined index, that is, it does not cross the origin.  Then it is sure that there is an array $(t_0, t_1, \dots, t_m)$ of values of the parameter  $t\in[a,b]$, $a = t_0 < t_1 < \dots < t_m = b$ whose images by the mapping $\D$  are in adjacent sectors. 

\begin{figure}[h!]
  \centering \includegraphics[width=\textwidth]{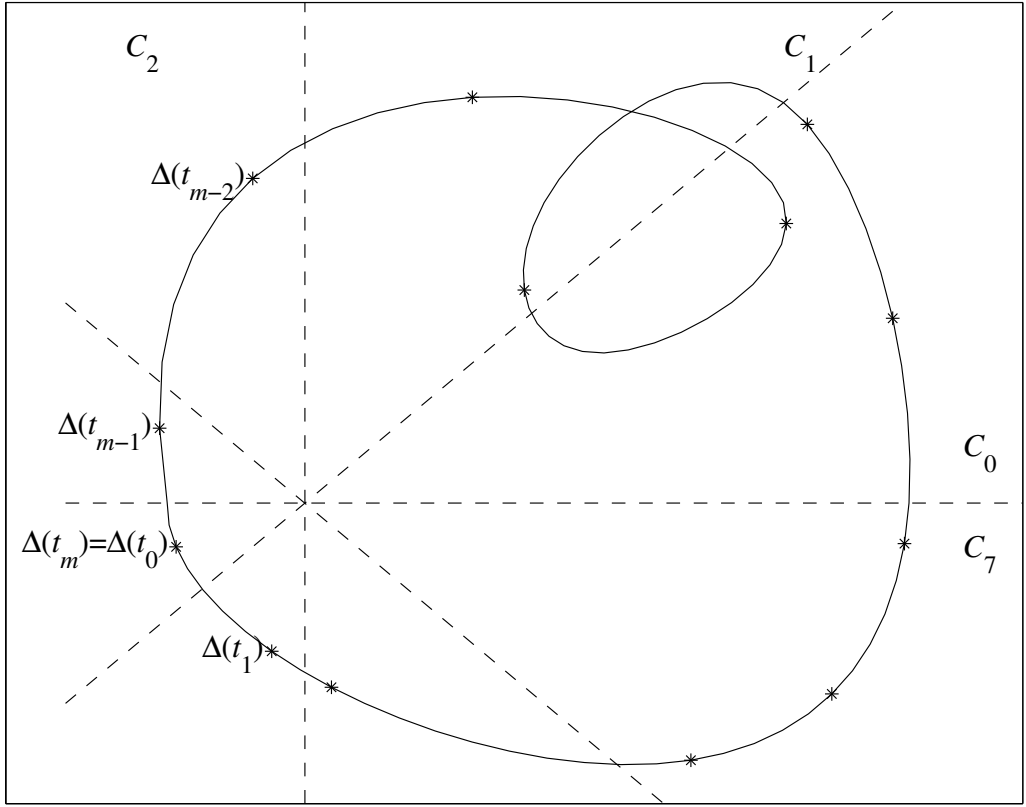}
  \parbox{.9\textwidth}{\caption{The images of the successive values $t_i$, $t_{i+1}$ are in adjacent (or the same) sectors.}\label{adja}}
\end{figure}

We say that a sector is {\em connected} to other sector if they are adjacent or are the same. Two points are {\em connected} if they are placed in connected sectors. A polygon of vertexes $\D(t_i)$, $i = 0, 1, \dots, m$ {\em satisfies the property of connection} if any two consecutive points $\D(t_i)$  and  $\D(t_{i+1})$ are connected. That is, the polygon verifies that if $\D(t_i)\in C_x$  and $\D(t_{i+1})\in C_y$  then $y = x \pm 1$ (or $y = x$). As the sectors $C_7$ and $C_0$ are adjacent, the equalities must be understood as an arithmetic congruence modulo 8.

Henrici \cite{henrici} suggests that if we obtain by some method an array of points $(\D(t_0),  \D(t_1), \dots,  \D(t_m))$ defining a polygon $\widetilde{\D_m}$ that verifies the property of connection, then its index $\ind(\widetilde{\D_m})$ can be calculated as the number of points $\D(t_i)$  in $C_7$ followed by a point $\D(t_{i+1})$  in $C_0$. The occurrence of a sector $C_0$ followed by $C_7$ must be counted negatively, standing for a turning back in the curve. That is, $\ind(\widetilde{\D_m})= \#(\mbox{crossings $C_7$ to $C_0$}) - \#(\mbox{crossings $C_0$ to $C_7$})$. The procedure to find the parameter values $(t_0, t_1,  \dots , t_m)$ was left unspecified by Henrici, as well as the conditions under which the index of the approximating polygon  $\widetilde{\D_m}$, defined by the $m$ points array $(\D(t_0),  \D(t_1), \dots,  \D(t_m))$, coincides with that of the original curve  $\D$. 
 
 The paper \cite{ying} of Ying and Katz devises a procedure to find an array $(\D(t_0),  \D(t_1), \dots,  \D(t_m))$ verifying the property of connection, at a reasonable computational cost. It constructs such array starting from some initial samples $(a = s_0, ..., s_n = b)$ of the curve  $\D$, whose images do not necessarily verify the property of connection. This means that perhaps, for some $i$, the images of $s_i$ and $s_{i+1}$ are not placed in connected sectors. The array of the images of $(s_0, \dots, s_n)$ is scanned from the beginning $s_0$, until a pair $(s_i, s_{i+1})$ is found such that its images $\D(s_i)\in C_x$  and $\D(s_{i+1})\in C_y$ are in non-connected sectors. In this situation an interpolation point $\frac{s_i + s_{i+1}}{2}$ is inserted in the array $(s_0, \dots, s_n)$ between $s_i$ and $s_{i+1}$. Then the resulting array  $(s'_0, ..., s'_{n+1})$ is scanned again for another pair $(s'_j, s'_{j+1})$  whose images are not in connected sectors and a middle point is inserted as described. Iterating this process, finally an array $(t_0, \dots, t_m)$, $m\geq n$, is obtained such that the polygon of its images $(\D(t_0), \dots,\D(t_m))$ verifies the property of connection.

This basic procedure of Ying-Katz has two features that prevent its practical application. First, it can run forever if it is applied to singular curves. Besides, in nonsingular curves, it can produce a result that is not correct, because a curve segment $\D(s_i, s_{i+1})$ that surrounds the origin can have its endpoints connected, hence is undetected by the procedure. For example, in figure \ref{lostturn}, $\D$ and $\widetilde{\D}$ have different winding number, by a lost turn between $s_i$ and $s_{i+1}$. 

\begin{figure}[h!]
  \centering \includegraphics[width=\textwidth]{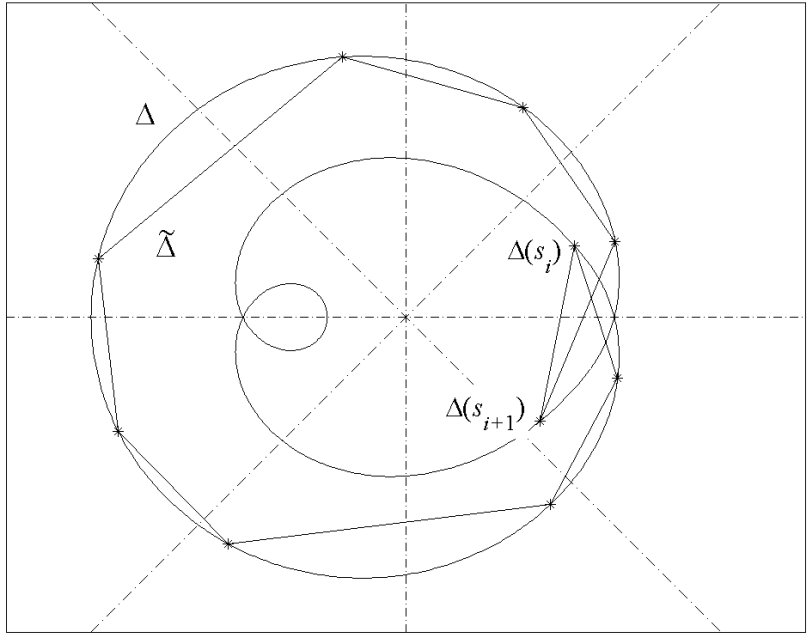}
  \parbox{.9\textwidth}{\caption{The curve  $\D$  is approximated by polygonal $\widetilde{\D}$, that verifies the property of connection, but theirs winding numbers are 2 and 1 respectively.}\label{lostturn}}
\end{figure}

In \cite{garciaart1} we proposed a procedure that enriches the loop of scanning-insertion of the basic procedure of Ying-Katz with an additional entry condition, that ensures the correct calculation of the winding number by the net number of $C7-C0$ crossings of the result array.
Being precise, for a curve  $\Delta:[a,b] \rightarrow \C$ Lipschitz of constant $L$, and an array $S =(s_0 , s_1 , \dots, s_n )$, the assertion $p(s_i)$ is  ``the values $s_i$  and $s_{i+1}$  in array $S$   have their images  $\Delta(s_i)$  and  $\Delta(s_{i+1})$  not connected'' and the assertion $q(s_i)$ is ``the values $s_i$  and $s_{i+1}$  in array $S$ verify   $(s_{i+1}-s_i)\geq \frac{|\Delta(s_i)|+ |\Delta(s_{i+1})|}{L}$''. The meaning of $s_{i+1}$ in the extreme case of $i=n$ is $s_0$. With this notation, we have the Insertion Procedure (so called IP) of figure \ref{IP}.

\begin{figure}[h]
  \centering 
  \fbox{
  \begin{minipage}{.9\textwidth}
  \vspace{.25cm}
  \hspace{0cm} \textbf{\textsl{Insertion procedure:}} To find the winding number of a curve $\Delta :[a,b] \rightarrow \C$

	\hspace{0.5cm} {\bf Input parameters:} The curve  $\Delta$ of Lipschitz constant $L$, and an array  $S=(a=s_0, s_1, \dots, s_n=b)$, sampling of $[a,b]$.
	
	\hspace{0.5cm} {\bf Output:} An array that is valid to compute $\ind(\Delta)$.

  \hspace{0.5cm} {\bf Method:}
   
 	\hspace{1cm} While there is a  $s_i$   in  $S$   with   $p(s_i)$  or with  $q(s_i)$  do:
	     
  \hspace{1.5cm} \{	Insert $\dfrac{s_i+s_{i+1}}{2}$   between $s_i$  and  $s_{i+1}$; 

  \hspace{1.5cm}  \}
  
  \hspace{1cm} Return the resulting array.
  \vspace{.25cm}
  \end{minipage}
  }% de fbox
  \caption{Insertion procedure (IP).}
  \label{IP}
\end{figure}

Informally, we can say that the insertion procedure involves a while loop that is repeated until a connected array (i.e., verifying the condition ``no $p$'') and without lost turns (``no $q$'') is obtained.

The validity of the output array to compute $\ind(\D )$ is proved in \cite{garciaart1}. In this work the cost of IP is also analyzed, using the notion of $\varepsilon$-singularity. For a value  $\varepsilon\geq 0$, we say that a curve is  {\sl $\varepsilon$-singular} if its minimum distance to the origin is  $\varepsilon$. The 0-singular curves are the curves previously called singular for the index integral. 
In fact, the following theorem (\cite{garciaart1}) bounds the number of iterations of IP. We denote $|S|=\max_{0\leq i \leq n-1}(s_{i+1}-s_i)$.

\begin{Theorem} \label{teorIP} If   $\Delta:[a,b] \rightarrow \C$ is  $\varepsilon$-singular with   $\varepsilon\neq0$, with a Lipschitzian parameterization of constant $L$, then the IP for the curve $\Delta$ with initial array $S$ concludes in less than  $\frac{4L(b-a)}{\pi\varepsilon}\left\lfloor\frac{L|S|}{\varepsilon}\right\rfloor \left\lceil\lg_2\left(\frac{4L|S|}{\pi\varepsilon}\right)\right\rceil+\left(\frac{4L|S|}{\pi\varepsilon}+1\right)\left\lfloor\frac{L(b-a)}{\varepsilon}\right\rfloor$  iterations, and the returned array gives us the winding number.
 \end{Theorem}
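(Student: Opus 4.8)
The plan is to read each pass through the \texttt{while} loop as the bisection of one consecutive pair $[s_i,s_{i+1}]$, and to record all bisections ever performed as a binary forest: its roots are the $n$ intervals of the initial array $S$, and whenever a pair is split we attach its two halves as children. Then the number of iterations of IP equals the number of internal nodes of this forest, so the bound will follow from (i) a lower bound on the length $s_{i+1}-s_i$ of any pair that is ever split, which caps the depth of the forest, together with (ii) an upper bound, at each fixed depth, on how many pairs of that depth get split — and crucially (ii) must not involve $n$. The mechanism for (ii) is that all pairs of a given depth are pairwise disjoint subintervals of $[a,b]$, so a length lower bound for the split ones limits their number in terms of $b-a$, $L$, $\varepsilon$ alone.

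The technical heart is the lemma: \emph{every pair $[s_i,s_{i+1}]$ that IP bisects satisfies $s_{i+1}-s_i\ge \frac{\pi\varepsilon}{4L}$.} A pair is split only while $p(s_i)$ or $q(s_i)$ holds for it. If $q(s_i)$ holds then $s_{i+1}-s_i\ge\frac{|\Delta(s_i)|+|\Delta(s_{i+1})|}{L}\ge\frac{2\varepsilon}{L}>\frac{\pi\varepsilon}{4L}$, using that $\Delta$ is $\varepsilon$-singular. If $p(s_i)$ holds, the continuous sub-arc $\Delta|_{[s_i,s_{i+1}]}$ runs from one angular sector to a non-adjacent one while staying outside the disc of radius $\varepsilon$; writing $\Delta=r e^{i\phi}$ with a Lipschitz branch $\phi$ of the argument (available since $\Delta$ is Lipschitz and $r\ge\varepsilon$), the total variation of $\phi$ on $[s_i,s_{i+1}]$ is at least $\pi/4$, so the arc length $\int|\Delta'|\ge\int r\,|\phi'|\ge\frac{\pi\varepsilon}{4}$, whereas the Lipschitz bound gives arc length $\le L(s_{i+1}-s_i)$; combining the two proves the lemma.

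Now a pair obtained after $d$ bisections inside an original interval has length at most $|S|/2^{d}$, so by the lemma no pair of depth $d>\log_2\!\frac{4L|S|}{\pi\varepsilon}$ is ever split; hence the forest has depth at most $D:=\lceil\log_2\frac{4L|S|}{\pi\varepsilon}\rceil$, and in particular IP terminates. For a fixed depth $d$, the depth-$d$ pairs are disjoint and lie in $[a,b]$, while each split one has length $\ge\pi\varepsilon/(4L)$, so at most $\frac{4L(b-a)}{\pi\varepsilon}$ of them are split; summing over the $\le D+1$ relevant depths bounds the internal nodes, hence the iterations. To land exactly on the two-term bound of the statement one refines this count by separating $q$-triggered from $p$-triggered bisections: for the $p$-type one also uses that the argument variation of $\Delta$ over a single original interval is at most $L|S|/\varepsilon$ (so roughly $\lfloor L|S|/\varepsilon\rfloor$ sector transitions inside it must be repaired, each within $D$ bisections, and only $\le 4L(b-a)/(\pi\varepsilon)$ original intervals are long enough to matter), while for the $q$-type one bounds the number of ``potential hidden turns'' across $[a,b]$ by $\lfloor L(b-a)/\varepsilon\rfloor$ and the number of subpairs of an original interval that can carry one by $4L|S|/(\pi\varepsilon)+1$. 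Finally, on termination the array satisfies $\neg p$ everywhere (it is connected) and $\neg q$ everywhere; a sub-arc looping once around the origin would have arc length $\ge 2\pi\varepsilon$ and would therefore trigger $q$, so no undetected turn survives and the connected-polygon formula yields $\ind(\Delta)$ as the net count of $C_7\!\to\!C_0$ transitions.

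I expect the main obstacle to be the bookkeeping in step (ii) rather than the lemma. The lemma is the one genuinely geometric ingredient, but it reduces to standard Lipschitz/Rademacher facts (arc length $=\int|\Delta'|\le L(b-a)$, existence of a Lipschitz argument branch) plus the elementary observation that an arc between non-adjacent $\pi/4$-sectors turns by at least $\pi/4$. The delicate part is ensuring the final estimate never covertly depends on $n$ — which is exactly why one must argue level by level via disjointness instead of the naive ``every original interval is subdivided to depth $D$'' bound — and, on top of that, tracking the constants tightly enough to produce the precise factors $\frac{4}{\pi}$, $\lfloor L|S|/\varepsilon\rfloor$ and $\lceil\log_2(4L|S|/\pi\varepsilon)\rceil$ rather than merely something of the same order.
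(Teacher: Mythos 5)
First, a caveat: this paper does not actually prove Theorem~\ref{teorIP} --- it is imported verbatim from \cite{garciaart1} --- so there is no in-paper proof to compare yours against, and your proposal has to stand on its own. Its core does stand: the polar estimate $L\geq|\Delta'|\geq r|\phi'|\geq\varepsilon|\phi'|$ together with ``non-adjacent sectors force argument variation $\geq\pi/4$'' correctly shows that any pair split for reason $p$ has length at least $\pi\varepsilon/(4L)$ (the $q$ case is immediate from $|\Delta|\geq\varepsilon$); the bisection forest then has depth at most $\lceil\lg_2(4L|S|/(\pi\varepsilon))\rceil$, and per-level disjointness caps the splits per level at $4L(b-a)/(\pi\varepsilon)$ independently of $n$. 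That yields termination and an explicit iteration bound of the right order, which is the substance of the theorem.

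Two genuine gaps remain, though. (i) The bound you actually derive, $\frac{4L(b-a)}{\pi\varepsilon}\left(\left\lceil\lg_2\frac{4L|S|}{\pi\varepsilon}\right\rceil+1\right)$, is not the stated bound and does not dominate it in every regime (for instance with $\lfloor L|S|/\varepsilon\rfloor=1$, $\lceil\lg_2(4L|S|/(\pi\varepsilon))\rceil=1$ and $L(b-a)/\varepsilon$ just below $2$, the stated expression is the smaller of the two), so the theorem as written is not yet proved; the ``refinement'' separating $p$-splits from $q$-splits, which is supposed to produce the factors $\lfloor L|S|/\varepsilon\rfloor$ and $\lfloor L(b-a)/\varepsilon\rfloor$, is only gestured at and is not an argument. (ii) More seriously, the final correctness step is wrong as stated: an arc of length $\geq 2\pi\varepsilon$ between $\Delta(s_i)$ and $\Delta(s_{i+1})$ does \emph{not} trigger $q$, because $q$'s threshold is $(|\Delta(s_i)|+|\Delta(s_{i+1})|)/L$ and $|\Delta(s_i)|+|\Delta(s_{i+1})|$ may greatly exceed $2\pi\varepsilon$ when the curve is far from the origin. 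The lemma you actually need --- and the one the very form of $q$ is designed around --- is that a lost turn forces the sub-arc to have length at least $|\Delta(s_i)|+|\Delta(s_{i+1})|$ (it must reach the rays antipodal to both endpoints), so that $\neg q$ excludes lost turns; as written, your implication fails exactly for the long sub-arcs one is worried about.
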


Pushing aside the constants, this cost bound is of order $O\left(\frac{1}{\varepsilon^2}\log\left(\frac{1}{\varepsilon}\right)+\frac{1}{\varepsilon^2} \right)=O\left(\frac{1}{\varepsilon^2}\log\left(\frac{1}{\varepsilon}\right)\right)$. However, this result is of little interest for the practical application of IP, because the value $\varepsilon$ is not known a priori.

The above theorem says us that the number of iterations is bounded by a decreasing  function of $\varepsilon$.
Reciprocally, we can deduce that if the number of iterations surpasses certain threshold, the value $\varepsilon$ should be low, that is, the curve should be near the origin. Following this viewpoint, we define another procedure that extends IP with a check to detect a small value of $\varepsilon$  and exits in that case (that corresponds to a curve close to singular). It is called insertion procedure with control of singularity (IPS). IPS introduces the new assertion  $r(s_{i},Q)$ that depends on a input parameter $Q$, as shown in figure \ref{IPS}.  $r(s_{i},Q)$ is defined as ``the values $s_{i} $  and $s_{i+1} $  in array $S $  verify  $s_{i+1} -s_{i} \leq Q$''. The parameter $Q$ controls the maximum number of iterations that IPS can reach.

\begin{figure}[h]
  \centering 
  \fbox{
  \begin{minipage}{.9\textwidth}
  \vspace{.25cm}
  \hspace{0cm} \textbf{\textsl{Insertion procedure with control of singularity:}} To find the winding number of a curve $\Delta :[a,b] \rightarrow \C$

	\hspace{0.5cm} {\bf Input parameters:} The curve  $\Delta$ with Lipschitz constant $L$, an array  $S=(s_0, s_1, \dots, s_n)$, sampling of $[a,b]$, and a real parameter $Q>0$.
	
	\hspace{0.5cm} {\bf Output:} An array that is valid to compute $\ind(\Delta)$, on normal exit, or a value $t$ in $[a,b]$ such that $|\Delta(t)|\leq\frac{LQ}{\sin(\pi/8)}$, if exit on error.

  \hspace{0.5cm} {\bf Method:}
   
 	\hspace{1cm} While there is a  $s_i$   in  $S$   with   $p(s_i)$  or with  $q(s_i)$  do:
	     
  \hspace{1.5cm} \{	Insert $\frac{s_i+s_{i+1}}{2}$   between $s_i$  and  $s_{i+1}$; 
  
  \hspace{1.5cm}    If  $r(s_{i},Q)$, return either  $t=s_i$ or $t=s_{i+1}$ depending on $\min(|\Delta(s_i^{(k)})|,|\Delta(s_{i+1}^{(k)})|)$. [Exit on error]

  \hspace{1.5cm}  \}
  
  \hspace{1cm} Return the resulting array. [Normal exit]
  \vspace{.25cm}
  \end{minipage}
  }% de fbox
  \caption{Insertion procedure with control of singularity (IPS).}
  \label{IPS}
\end{figure}

In \cite{garciaart1} is also proved that:

\begin{Theorem}\label{teorIPS} If  $\D:[a,b] \rightarrow \C$ is Lipschitzian with constant $L$, $S$ an array sampling of $[a,b]$, and $Q$ a positive real,  the insertion procedure with control of singularity applied to $\D$, $S$ and $Q$ verifies:
\begin{itemize}
\item[a)]	It returns in less than $\left\lfloor\frac{b-a}{Q}\right\rfloor$  iterations.
\item[b)]	If it exits normally then the returned array gives us $\ind(\Delta )$.
\item[c)]	If it exits on error, the returned value $t$ verifies  $|\Delta(t)|\leq\frac{LQ}{\sin\left(\frac{\pi}{8}\right)}$. 
\end{itemize}
\end{Theorem}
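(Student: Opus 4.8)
The plan is to handle the three assertions in turn; (b) will be essentially a restatement of the validity result already proven for IP (our Theorem \ref{teorIP}), (a) is a combinatorial bound on the refinement tree, and the real content is a small planar estimate behind (c). For (a) I would model a run as a binary refinement forest: the $n$ roots are the initial subintervals $[s_i,s_{i+1}]$, and each iteration attaches to the node it splits the two halves as children, so the number of iterations equals (number of leaves)$-n$. The decisive observation is the exit test: after splitting an interval of length $\ell$ into halves of length $\ell/2$, the loop continues only when $r$ fails, i.e.\ only when $\ell/2>Q$; hence every interval that is split and then followed by a further iteration had length $>2Q$, so the halves it produces have length $>Q$. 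Therefore all leaves of the forest except (at most the two) created by the terminating exit-on-error split are pairwise-disjoint subintervals of $[a,b]$ of length $>Q$, so there are strictly fewer than $(b-a)/Q$ of them; since every performed iteration splits a descendant of one of the $n$ original intervals, a short accounting yields the bound of fewer than $\lfloor(b-a)/Q\rfloor$ iterations. In particular the loop always halts, so the procedure returns.

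For (b): a normal exit occurs exactly when the while-condition fails, i.e.\ when the returned array $\bar S$ satisfies $\neg p(s_i)$ and $\neg q(s_i)$ for every $i$. The condition $\neg p$ is the property of connection, so the net count of $C_7\to C_0$ crossings of the associated polygon equals its index, and $\neg q$ forbids a lost turn, so that index coincides with $\ind(\Delta)$. But ``$\neg p$ and $\neg q$ everywhere, reached by midpoint refinement'' is exactly the halting situation of IP, so $\bar S$ is a legitimate IP output and Theorem \ref{teorIP} applies unchanged. For (c), suppose the procedure exits on error while processing an interval $[s_i,s_{i+1}]$ for which $p(s_i)$ or $q(s_i)$ holds; the test $r$ having fired means the left half has length $\le Q$, hence $s_{i+1}-s_i\le 2Q$, and by Lipschitzianity $|\Delta(s_i)-\Delta(s_{i+1})|\le L(s_{i+1}-s_i)\le 2LQ$. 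If $q(s_i)$ holds then directly $|\Delta(s_i)|+|\Delta(s_{i+1})|\le L(s_{i+1}-s_i)\le 2LQ$, so $\min(|\Delta(s_i)|,|\Delta(s_{i+1})|)\le LQ<\frac{LQ}{\sin(\pi/8)}$. If instead $p(s_i)$ holds then $\Delta(s_i)$ and $\Delta(s_{i+1})$ lie in non-connected sectors and so subtend at the origin an angle $\theta\ge\pi/4$; here I would insert the elementary lemma that whenever $\angle AOB=\theta\ge\pi/4$ one has $|A-B|\ge 2\sin(\pi/8)\min(|A|,|B|)$: from $|A-B|^2=|A|^2+|B|^2-2|A||B|\cos\theta$ and $\cos\theta\le\cos(\pi/4)<1$ one sees that, with the smaller modulus fixed, the right-hand side is increasing in the larger modulus over its admissible range $[\min,\infty)$, so its infimum is attained at $|A|=|B|$, $\theta=\pi/4$, equal to $4\sin^2(\pi/8)\min(|A|,|B|)^2$. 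Combining with the Lipschitz bound gives $\min(|\Delta(s_i)|,|\Delta(s_{i+1})|)\le \frac{2LQ}{2\sin(\pi/8)}=\frac{LQ}{\sin(\pi/8)}$, and since the returned $t$ is the endpoint achieving this minimum, $|\Delta(t)|\le\frac{LQ}{\sin(\pi/8)}$.

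The step I expect to be most delicate is the exact constant in (a): pinning down whether the terminating exit-on-error split should be charged, the borderline case where $(b-a)/Q$ is an integer, and the harmless implicit hypothesis $Q\le b-a$ (otherwise even one forced split can exceed $\lfloor(b-a)/Q\rfloor=0$). In (c) the only genuine point is isolating the planar inequality with the sharp constant $\sin(\pi/8)$; once that is in hand the estimate is immediate, and (b) is little more than quoting the IP validity theorem.
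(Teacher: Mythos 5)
The paper itself does not prove Theorem \ref{teorIPS}: it is quoted from \cite{garciaart1} with no in-text argument, so there is nothing here to compare your route against; I can only judge the proposal on its merits, and it is essentially sound. For (c) you have independently isolated the right geometric fact: your inequality $|A-B|\geq 2\sin(\theta/2)\min(|A|,|B|)$ for points subtending angle $\theta$ at the origin is exactly the paper's Lemma \ref{xyz} specialized to vertex $z=O$, and combining it with $s_{i+1}-s_i\leq 2Q$ (correctly read off from the fact that $r$ tests the \emph{half}-interval after insertion), the Lipschitz bound, and the minimum angle $\pi/4$ between non-connected sectors gives precisely $LQ/\sin(\pi/8)$; the $q$-case is immediate. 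For (b) your reduction to the IP correctness result is legitimate, but one point deserves to be explicit: Theorem \ref{teorIP} carries the hypothesis $\varepsilon\neq 0$, which a normal exit does not assume but does imply, since $\neg q(s_i)$ for every $i$ forces, via Lipschitzianity, $\Delta$ to be nonzero on each subinterval, so the curve is indeed nonsingular and its index is defined.

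There are two slips in (a), both minor. First, your assertion that all leaves of the refinement forest (minus the last two) have length $>Q$ is false as stated: an initial subinterval for which neither $p$ nor $q$ ever holds is never split, remains a root leaf, and may be arbitrarily short. The accounting survives because you only need (number of internal nodes) $\leq$ (number of non-root leaves), and it is the non-root leaves that are guaranteed longer than $Q$ --- but you should say this rather than the blanket claim. Second, on an error exit the terminal split contributes two non-root leaves of length $\leq Q$, so this argument gives only (iterations) $<(b-a)/Q+2$, and even on a normal exit it gives $<(b-a)/Q$ rather than $<\lfloor(b-a)/Q\rfloor$. You flag these borderline issues yourself; given that the paper uses three mutually inconsistent versions of this constant ($\lfloor(b-a)/Q\rfloor$ here, $\lfloor(b-a)/Q+1\rfloor$ in Theorem \ref{teorIPSR}, and $\lfloor(b-a)/Q-1\rfloor$ in the final cost count), this is looseness in the statement rather than a gap in your argument, but the write-up should state the bound it actually proves.
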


Theorem \ref{teorIPS} sets that IPS effectively computes the index of curves $\Delta$ with   $\varepsilon > \frac{LQ}{\sin\left(\frac{\pi}{8}\right)}$. For curves whose distance to the origin is under this level, IPS can by chance return normally with an array valid to compute the index, or it can return with error, signaling that the input curve is $\varepsilon$-singular. In any case, the procedure returns in less than $\left\lfloor\frac{b-a}{Q}\right\rfloor$ iterations.

\section{Insertion procedure for curves $\D= f(\G )$}

The computational cost of a winding number computation using IP (that is, the number of iterations) is related to the distance of the curve from the origin $\di(O,\D)$ (theorem \ref{teorIP}). The IPS, in addition, when the cost exceeds a threshold, returns with error with a bound on this distance (theorem \ref{teorIPS}). 

In this section we focus on curves of the form $\D=f(\G)$. These curves arise when we compute the number of roots of $f$ contained inside $\G$. 
In this particular case, the cost of applying IP to $\D$ given by theorem \ref{teorIP}  can be put in function of the distance of the curve $\G$ from the nearest root of $f$. We will show that the bound of this distance that we can obtain from theorem \ref{teorIPS} is not adequate for our purpose. In consequence, we define another procedure (IPSR, theorem \ref{teorIPSR}) that, when it returns on error, gives us a  bound of  certain function of the roots  and the curve (the condition number). This bound  will be useful in the next section, for the iterative method of partition of the search region. 

To compute the winding number of curves $\D=f(\G)$  using IP, the theorem \ref{teorIP} requires that $\D$ be Lipschitzian. To ensure that this requirement is fulfilled, we impose that $\G$ be uniformly parameterized. This is not a restriction in practice because the curve $\G$ that encloses the region of interest is usually built connecting straight segments (or circumference arcs) uniformly parameterized. Also, if this plane region is compact, its boundary $\G$ is bounded and, consequently, by a result about differentiable functions in a bounded set (\cite{kolmogorov}), $f$ is Lipschitzian (on the set  $\G$) with constant $L=\sup_{x\in\G}{|f'(\G(x))|}$. Hence  $\D= f(\G)$ is the composition of an uniformly parameterized curve and a Lipschitzian function, and so $\D$ is a Lipschitzian curve with the same constant $L$. Therefore the hypothesis of theorem \ref{teorIP} is verified, and IP computes the winding number of $\D= f(\G )$ (i.e., the number of roots inside $\G$).

The key factor in the cost of the winding number computation, by theorem \ref{teorIP}, is the distance to the origin from $\D$. This distance and the roots of $f$ are related as described in the following proposition.

Let $f(z)=a_nz^n+\dots+a_1z+a_0$ be a polynomial of degree $n$ and its root decomposition $f(z)=a_n(z-z_1)(z-z_2)\dots(z-z_n)$. Recall that the distance $\di(A,B)$ between two sets $A$ and $B$ is the minimum of the distances between each pair of points of the respective sets. In particular, if $Z$ is the set of the roots of $f$, $Z=\{z_1,z_2,\dots, z_n\}$, its distance $\di(Z,\G)$ to  $\G$ is the distance from the closer root, and $\di(O,\D)$ is the distance from the origin $O$ to $\D$.

\begin{propositionn}\label{GtoD}\label{DtoG} If  $\G:[a,b] \rightarrow \C$ is a curve uniformly parameterized, $f$ a polynomial of degree $n$,  with Lipschitz constant $L$, and $\D=f(\G)$, then:
$$
|a_n|\di(Z,\G)^n\leq \di(O,\D)\leq L\di(Z,\G)
$$
\end{propositionn}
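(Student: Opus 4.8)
The plan is to obtain the two inequalities from the two descriptions of $f$: its factored form $f(z)=a_n\prod_{i=1}^{n}(z-z_i)$ gives the lower bound, while its Lipschitz behaviour near a root gives the upper bound. First I would record two elementary facts. Since $\D=f(\G)$, we have $\di(O,\D)=\min_{t\in[a,b]}|f(\G(t))|$, the minimum being attained because $\G$ is compact (a continuous image of $[a,b]$) and $f$ is continuous; similarly $\di(Z,\G)=\min_{1\leq i\leq n}\min_{t\in[a,b]}|\G(t)-z_i|$ is attained, say at the root $z_j$ and the point $x^{*}=\G(t^{*})\in\G$, so that $|x^{*}-z_j|=\di(Z,\G)$.

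For the left inequality I would simply substitute the factorization. For every $t\in[a,b]$ and every root $z_i$ one has $|\G(t)-z_i|\geq \di(z_i,\G)\geq \di(Z,\G)$, hence
$$|f(\G(t))|=|a_n|\prod_{i=1}^{n}|\G(t)-z_i|\geq |a_n|\,\di(Z,\G)^{\,n}.$$
Taking the minimum over $t\in[a,b]$ yields $|a_n|\,\di(Z,\G)^{n}\leq \di(O,\D)$. This direction is purely algebraic and should cause no trouble.

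For the right inequality I would use that $x^{*}\in\G$, so $\di(O,\D)\leq|f(x^{*})|$, together with $f(z_j)=0$:
$$\di(O,\D)\leq|f(x^{*})|=|f(x^{*})-f(z_j)|\leq L\,|x^{*}-z_j|=L\,\di(Z,\G),$$
the middle step being the Lipschitz estimate for $f$.

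The one point that needs care — the main obstacle — is this last Lipschitz step: the constant $L$ was introduced as $\sup_{x\in\G}|f'(\G(x))|$, a Lipschitz bound along $\G$, whereas here it is applied between the curve point $x^{*}$ and the root $z_j$, which need not lie on $\G$. I would handle this via the mean value inequality along the segment $[x^{*},z_j]$, namely $|f(x^{*})-f(z_j)|\leq |x^{*}-z_j|\,\sup_{0\leq s\leq 1}|f'(z_j+s(x^{*}-z_j))|$, and observe that in the present setting $L$ may be taken as a bound for $|f'|$ on any bounded convex set containing $\G$ together with the roots nearest to it (a polynomial is Lipschitz on every bounded set); equivalently, one reads the proposition with $L$ understood as such a Lipschitz constant for $f$ on the pertinent region. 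Granting this, chaining the two displayed inequalities gives exactly $|a_n|\,\di(Z,\G)^{n}\leq\di(O,\D)\leq L\,\di(Z,\G)$.
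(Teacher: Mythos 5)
The paper states this proposition without proof, so there is no in-text argument to compare yours against; judged on its own, your proof is correct and is surely the intended one. The lower bound via the factorization $|f(\G(t))|=|a_n|\prod_i|\G(t)-z_i|\geq|a_n|\di(Z,\G)^n$ and the upper bound via $\di(O,\D)\leq|f(x^*)|=|f(x^*)-f(z_j)|\leq L|x^*-z_j|$ are both sound, and you are right to flag the only delicate point: the paper introduces $L$ as $\sup_{x\in\G}|f'(\G(x))|$, a bound on $|f'|$ along $\G$ only, whereas the mean value inequality needs $|f'|$ controlled on the segment $[z_j,x^*]$, which leaves $\G$. Your fix (read $L$ as a Lipschitz constant of $f$ on a bounded convex set containing $\G$ and the nearby roots) is legitimate. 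Worth adding: in the situation the paper actually cares about — $\G$ the boundary of a convex region $P$ and the nearest root $z_j$ inside $P$ — the paper's own $L$ already suffices without enlargement, because $f'$ is analytic, so by the maximum modulus principle $\sup_{\overline{P}}|f'|=\sup_{\G}|f'|=L$, and the segment $[z_j,x^*]$ lies in $\overline{P}$ by convexity. Only when the nearest root lies outside $\G$ does one need your enlarged constant (or accept a correction term of lower order, since $|f'(w)|\leq L+L'|w-x^*|$ on the segment with $L'$ a Lipschitz constant for $f'$). Stating which reading of $L$ you adopt would make the proposition airtight.
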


Remember that if we apply IP to compute the number of roots inside $\G$, its cost will be $O\left(\frac{1}{\varepsilon^2}\log\left(\frac{1}{\varepsilon}\right)\right)$ by the bound of Theorem \ref{teorIP}, where $\varepsilon=\di(O,\D)$. In terms involving $\G$, by the inequality   
$\frac{1}{|a_n|\di(Z,\G)^n}\geq\frac{1}{\di(O,\D)}$ deduced from the above proposition, the computation has a cost of order lesser or equal than  $$O\left(\frac{1}{(|a_n|\di(Z,\G)^n)^2}\log\left(\frac{1}{|a_n|\di(Z,\G)^n}\right)\right)$$ that is $O\left(\frac{1}{\di(Z,\G)^{2n}}\log\left(\frac{1}{\di(Z,\G)}\right)\right)$.

As the distance $\di(Z,\G)$ to the roots is unknown, we confront an undetermined cost using IP. This was the motivation to introduce IPS (figure \ref{IPS}), which with we have an assured bound of cost of $\left\lfloor\frac{(b-a)}{Q}\right\rfloor$ iterations, by theorem \ref{teorIPS}, since $\D=f(\G)$ and $\G$ is defined over $[a,b]$. In addition to this assured cost, a second advantage of IPS is that if the bound of cost is reached, it returns on error with a point of the curve near to origin, $|\Delta(t)|\leq\frac{LQ}{\sin\left(\frac{\pi}{8}\right)}$, as theorem \ref{teorIPS} says. This gives us a bound to $\di(O,\D)$, that is $\frac{LQ}{\sin\left(\frac{\pi}{8}\right)}$. In the particular case of curves of the form $\D=f(\G)$, we can deduce a bound to $\di(Z,\G)$:  by the proposition \ref{GtoD},   $|a_n|\di(Z,\G)^n\leq \di(O,\D) \leq |\Delta(t)| \leq \frac{LQ}{\sin\left(\frac{\pi}{8}\right)}$, and hence $\di(Z,\G)\leq\sqrt[n]{\frac{LQ}{|a_n|\sin(\pi/8)}}$, in case of error exit of IPS.

%\begin{figure}
%  \centering \includegraphics[trim = 0mm 80mm 0mm 80mm, clip, width=\textwidth]{}
%  \parbox{.9\textwidth}{\caption{Distances in case that IPS returns with error.}\label{distZerror}}
%\end{figure}

%A bound of $\di(Z,\G)$ in case of error is necessary for the intended application of the number of roots as inclusion test 
%in a recursive algorithm for root finding. This is because a subregion arising from the partition of a region containing roots can have a border passing over a root (figure \ref{subreg}).
%
%If we use the IPS as inclusion test, in case of error it gives us a point in the border of the subregion near a root. Such point can be subsequently used as an approximation to the root if it is the only one in region, or as a guide to later partitions in there are multiple roots. 

In case of return with error we need a  bound finer than this on $\di(Z,\G)$, to locate a root close to the point that produce the error. Unfortunately, the bound given by IPS is very loose, because it  decreases much more slowly than the parameter $Q$: its value can be near to 1 for moderate values of $n$ even with a very small $Q$. Taking the $n$-th root is a severe handicap in the formula of a bound. The procedure IPS, as least to the extent that is covered by theorem \ref{teorIPS}, does not provide this bound at a reasonable cost. 

We must define another procedure for which a bound exists that depends on $Q$ without $n$-th root extraction. In the Insertion Procedure with control of Singularity for number of Roots (IPSR, figure \ref{IPSR}), we use the predicates $p$, $q_2$ and $r$ meaning $p(s_i)$  that ``the values $s_i$  and $s_{i+1}$  in array $S$   have their images  $f(\G(s_i))$  and  $f(\G(s_{i+1}))$  not connected'', $q_2(s_i)$ is ``the values $s_i$  and $s_{i+1}$  in array $S$ verify: 
\begin{align*}
\left|f(\G(s_i))\right| +\left| f(\G(s_{i+1})) \right|\leq2\left|f'(\Gamma(s_i)) \right|(s_{i+1}-s_i)& + 
\left|f(\G(s_{i+1}))- f(\G(s_{i}))\right|
%\left|f(\G(s_i))\right| +\left| f(\G(s_{i+1})) \right| &\leq \\ \leq \big(\left|f'(\G(s_i))\right| &+\left| f'(\G(s_{i+1})) \right|\big)(s_{i+1}-s_i)+\left|f(\G(s_{i+1})) -f(\G(s_i)) \right|
\text{''},
\end{align*}
and $r(s_i,Q)$ means ``the values $s_{i}$  and $s_{i+1} $  in array $S$  verify
$(s_{i+1} -s_{i})\leq Q$''. As before, $s_{i+1}$ in the extreme case of $i=n$ should be intended as $s_0$. The assertions $p$ and $r$ are equal than in IPS, but $q_2$ is different from $q$. The assertion $q_2$ is required in proposition \ref{lt}   to prove that there are not lost turns and in proposition  \ref{qins} to show that certain bound in error return is verified.   

%For comparison purposes only, note that $q_2$ is as a stronger assertion (that is, $q_2$ implies $q$) because $L\geq|f'(z)|$ for each $z$ and then $\frac{\left|f(z)\right|}{L}\leq \frac{\left|f(z)\right|}{\left|f'(z)\right|}$. Hence 
%$$
%\frac{\left|f(\G(s_i))\right|}{L}+ \frac{\left| f(\G(s_{i+1})) \right|}{L} \leq \frac{\left|f(\G(s_i))\right|}{\left|f'(\G(s_i))\right|}+ \frac{\left| f(\G(s_{i+1})) \right|}{\left| f' (\G(s_{i+1})) \right|} \leq (s_{i+1}-s_i),
%$$
%%noindent and this is equivalent to $q$.

The return of the IPSR verifies a claim that involves $\kappa_f(\G)$, the {\em condition number of the curve $\G$  with respect to $f$},  defined as the sum of inverses of the distances from $\G$ to each root of $f$:
$$\kappa_f(\G)=\sum_{i=1}^{n}\frac{1}{\di(z_i,\G)}.$$ 

\begin{figure}
  \centering 
  \fbox{
  \begin{minipage}{.9\textwidth}
  \vspace{.25cm}
  \hspace{0cm} \textbf{\textsl{Insertion procedure with control of singularity for the number of roots:}} To find the number of roots of a polynomial $f$ inside a curve $\G :[a,b] \rightarrow \C$

	\hspace{0.5cm} {\bf Input parameters:} The curve  $\G$ uniformly parameterized, the polynomial $f$ of degree $n$ and Lipschitz constant $L$, an array  $S=(s_0, \dots, s_n)$, sampling of $[a,b]$, and a real parameter $Q>0$.
	
	\hspace{0.5cm} {\bf Output:} An array $(t_0, \dots, t_m)$ that is valid to compute $\ind(f(\G))$ on normal exit. A value $t \in [a,b]$ such that   $\kappa_f(\G)$ is greater  than  $\frac{\sin(\pi/8)}{Q}$, if exit on error. 

  \hspace{0.5cm} {\bf Method:}
   
 	\hspace{1cm} While there is a  $s_i$   in  $S$   with   $p(s_i)$  or with  $q_2(s_i)$  do:
	     
  \hspace{1.5cm} \{	Insert $\frac{s_i+s_{i+1}}{2}$   between $s_i$  and  $s_{i+1}$; 
  
  \hspace{1.85cm}    If  $r(s_{i},Q)$, return. [Exit on error]
  %$t=s_i$ or $t=s_{i+1}$ depending on $\min\left(\left|\frac{f(\G(s_i))}{f'(\G(s_i))}\right|,\left|\frac{f(\G(s_{i+1}))}{f'(\G(s_{i+1}))}\right|\right)$. 

  \hspace{1.5cm}  \}
  
  \hspace{1cm} Return the resulting array. [Normal exit]
  \vspace{.25cm}
  \end{minipage}
  }% de fbox
  \caption{Insertion procedure with control of singularity for number of roots (IPSR).}
  \label{IPSR}
\end{figure}

The role of condition number  $\kappa_f(\G)$ in IPSR is similar to that of the inverse of $\di(O,\D)$, the singularity of $\D$, in IPS. By theorem \ref{teorIPS}, an error return of IPS implies a low value of $\di(O,\D)$, that is, a high value of $\frac{1}{\di(O,\D)}$. We will show that an error return of IPSR implies a high value of $\kappa_f(\G)=\sum_{i=1}^{n}\frac{1}{\di(z_i,\G)}$.

With the following proposition we will prove (in theorem \ref{teorIPSR}) that if IPSR exit normally, the returned array gives the number of roots inside $\G$.

\begin{propositionn} \label{lt} For a curve $f(\G)$, if $q_2(s_i)$ in array $S=(\dots,s_i,s_{i+1},\dots)$ is not verified, then there is not a lost turn between $s_i$ and $s_{i+1}$.
\end{propositionn}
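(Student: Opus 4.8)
The plan is to prove the contrapositive: assuming there is a lost turn between $s_i$ and $s_{i+1}$, deduce $q_2(s_i)$. Throughout put $w_0=f(\G(s_i))$, $w_1=f(\G(s_{i+1}))$, and let $A$ be the arc $f(\G)|_{[s_i,s_{i+1}]}$; as in the discussion around Figure~\ref{lostturn}, a lost turn between $s_i$ and $s_{i+1}$ means that the closed curve formed by $A$ followed by the straight chord from $w_1$ to $w_0$ has nonzero winding number about the origin. If that chord passes through the origin, then $w_0$ and $w_1$ lie on opposite rays, so $|w_0|+|w_1|=|w_1-w_0|$ and $q_2(s_i)$ holds automatically, since its right-hand side also carries the nonnegative term $2|f'(\G(s_i))|(s_{i+1}-s_i)$; hence I may assume the chord avoids the origin.

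The first step is a purely topological observation: a curve contained in a closed disc $\overline{B(c,r)}$ with $r<|c|$ has winding number $0$ about the origin, because $1/w$ is holomorphic on a neighbourhood of that disc, so Cauchy's theorem kills the index integral. Apply this with $c=w_0$ and $\rho:=\sup_{\tau\in[s_i,s_{i+1}]}|f(\G(\tau))-w_0|$. If $\rho<|w_0|$, then the whole closed curve — both $A\subseteq\overline{B(w_0,\rho)}$ and the chord, which lies within distance $|w_1-w_0|\le\rho$ of $w_0$ because $w_1$ is an endpoint of $A$ — would sit inside $\overline{B(w_0,\rho)}$, forcing winding number zero, contrary to hypothesis. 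Hence the lost turn forces $\rho\ge|w_0|$. Combining with $|f(\G(\tau))-w_1|\le|f(\G(\tau))-w_0|+|w_1-w_0|$, which gives $|w_1|\le\rho+|w_1-w_0|$, I obtain
$$|w_0|+|w_1|\ \le\ 2\rho+|w_1-w_0|,$$
so it only remains to show $\rho\le|f'(\G(s_i))|(s_{i+1}-s_i)$, after which $q_2(s_i)$ follows.

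The second step — the analytic heart of the matter — is exactly this bound on $\rho$, and it is where the uniform parameterization is used. On $[s_i,s_{i+1}]$ the curve $\G$ is a straight segment $\G(\tau)=\G(s_i)+(\tau-s_i)u$ with $|u|$ constant (normalized so $|u|\le 1$), so for $\tau\in[s_i,s_{i+1}]$ I would write $f(\G(\tau))-f(\G(s_i))=f'(\G(s_i))\,(\G(\tau)-\G(s_i))+R(\tau)$, where the leading term has modulus at most $|f'(\G(s_i))|(s_{i+1}-s_i)$, and the remainder $R$ is a polynomial in $\tau$ vanishing at $s_i$ whose size along the segment is to be controlled through its endpoint value $R(s_{i+1})=(w_1-w_0)-f'(\G(s_i))(\G(s_{i+1})-\G(s_i))$, hence through $|w_1-w_0|$ and $|f'(\G(s_i))|(s_{i+1}-s_i)$. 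Feeding this back into the estimate for $\rho$ and chasing constants should yield $\rho\le|f'(\G(s_i))|(s_{i+1}-s_i)$; heuristically, the factor $2$ and the secant term $|w_1-w_0|$ in the definition of $q_2$ are precisely the slack needed to absorb the nonlinear part of $f$ along the segment — the reason $q_2$ can replace the cruder predicate $q$ of IP, which relied on the global Lipschitz constant.

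I expect this last step to be the main obstacle: the delicate point is getting the bound on $\rho$ with $f'$ evaluated only at the single point $\G(s_i)$ rather than with $\sup_{\G}|f'|$, so that one cannot simply bound $\rho$ by the arc length of $A$. This forces a genuinely segment-by-segment analysis exploiting that $f$ is a polynomial and $\G$ is straight there, and, implicitly, that for the pieces $[s_i,s_{i+1}]$ that survive the while-loop of IPSR the nonlinear contribution is indeed dominated by the two terms appearing in $q_2$. Everything else — the contrapositive framing, the degenerate chord-through-origin case, and the topological disc argument — is routine.
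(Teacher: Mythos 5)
Your reduction to the contrapositive, the treatment of the degenerate chord-through-origin case, and the disc argument giving $|w_0|\le\rho$ and hence $|w_0|+|w_1|\le 2\rho+|w_1-w_0|$ are all sound. But the proof then hinges entirely on the inequality $\rho\le|f'(\G(s_i))|(s_{i+1}-s_i)$, which you do not prove and which is in fact false in general: if $\G(s_i)$ is a critical point of $f$ (say $f(z)=z^2+c$ with $\G(s_i)=0$), then the right-hand side is $0$ while $\rho=\sup_\tau|f(\G(\tau))-f(\G(s_i))|>0$ for any nondegenerate segment. More generally $\rho$ is controlled by $\int_{s_i}^{s_{i+1}}|f'(\G(\tau))|\,d\tau$, i.e.\ by the values of $f'$ along the whole segment, and no bound of the form $\rho\le|f'(\G(s_i))|\delta$ can hold with $f'$ evaluated only at the left endpoint. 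Your Taylor-remainder sketch cannot repair this: you have already spent the entire slack term $|w_1-w_0|$ of $q_2$ in deriving $|w_0|+|w_1|\le 2\rho+|w_1-w_0|$, so nothing remains to absorb the remainder $R(\tau)$, and controlling $R$ only through its endpoint value $R(s_{i+1})$ says nothing about its size at interior points of the segment.

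The deeper problem is that your step 1 extracts too weak a consequence from the lost turn. A nonzero winding of arc-plus-chord forces the arc not merely to leave the disc $B(w_0,|w_0|)$ but to cross the ray opposite to $w_0$ (and the one opposite to $w_1$), because the chord can account for an argument variation of less than $\pi$; equivalently, writing $f(z)=a_n\prod_j(z-z_j)$, the angles subtended by the segment $\G([s_i,s_{i+1}])$ at the roots must sum to more than $\pi$, which places a root close to the segment (this is what lemma \ref{xyz} is set up to quantify). A workable proof of the proposition has to go through this kind of argument-variation and root-proximity analysis rather than through a sup-norm bound on the image arc; as written, your argument establishes only the weaker implication ``lost turn $\Rightarrow$ $|w_0|+|w_1|\le 2\rho+|w_1-w_0|$'' and leaves the essential content of $q_2$ unproved.
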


We now classify the iterations performed by the ``while'' loop. An iteration is {\sl of type $p$} if it is performed because the property $p(s_i)$  is verified, and  it is {\sl of type $q$} if it is performed because the property ``not $p(s_i)$, and $q_N(s_i)$'' is verified. So any iteration is of type $p$ or of type $q$, but not both. Likewise the error exits can be classified as of type $p$, or $q$,  according to the type of the iteration in which the return occurs. 
 
We will show in theorem \ref{teorIPSR} that IPSR exits with an array valid to compute the number of roots inside $\G$, or with a lower bound of the condition number (that is, the analogous of theorem \ref{teorIPS} for IPS). Previously we give bounds on this condition  on errors of type $p$ and $q$ in two propositions after several auxiliary lemmas. We consider the angles measured in the interval $[0,2\pi)$.

\begin{lemman} \label{xyz} If $\alpha$ is the oriented angle between three complex numbers $(x,z,y)$ with vertex at $z$ (that is, $\alpha=\arg(z-y)-\arg(z-x)$) then 
$$
\min(\di(z,x),\di(z,y))\leq\frac{\di(x,y)}{2|\sin({\alpha}/{2})|}
$$
\end{lemman}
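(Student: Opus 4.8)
The plan is to reduce the statement to elementary planar trigonometry, essentially the law of sines in the triangle with vertices $x$, $y$, $z$. First I would introduce the side lengths $a = \di(z,x)$, $b = \di(z,y)$, $c = \di(x,y)$, and note that $\alpha$ is, up to sign, the interior angle of this triangle at the vertex $z$ (the oriented angle $\arg(z-y)-\arg(z-x)$ has absolute value equal to that angle when the angle is taken in $(0,\pi)$; if $\alpha$ lands in $(\pi,2\pi)$ we replace it by $2\pi-\alpha$, which leaves $|\sin(\alpha/2)|$ unchanged since $\sin((2\pi-\alpha)/2)=\sin(\pi-\alpha/2)=\sin(\alpha/2)$, so no loss of generality). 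The degenerate cases $\alpha=0$ or $\alpha=\pi$ (collinear points) should be checked separately: if $\alpha=\pi$ the right-hand side is $\di(x,y)/2$, and since $z$ lies strictly between $x$ and $y$ on a line one of $a,b$ is at most $c/2$, so the inequality holds; if $\alpha=0$ the bound is infinite and there is nothing to prove.

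Next, in the nondegenerate case, I would drop the perpendicular (altitude) from $z$ to the line through $x$ and $y$, and call its foot $w$ and its length $h$. Then $\di(x,y)=\di(x,w)+\di(w,y)$ when $w$ is between $x$ and $y$ (the relevant case, covered by $\alpha\le\pi$), so $\min(\di(x,w),\di(w,y))\le c/2$. Writing $\beta=\angle xzw$ and $\gamma=\angle wzy$ we have $\beta+\gamma=\alpha$ with $\beta,\gamma\ge 0$, and $\di(x,w)=a\sin\beta$, $\di(w,y)=b\sin\gamma$, while $a=h/\cos\beta\ge h$ and $b=h/\cos\gamma\ge h$ — hmm, this is getting complicated. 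A cleaner route: use the law of sines directly. In the triangle, $\dfrac{c}{\sin\alpha}=\dfrac{a}{\sin A}=\dfrac{b}{\sin B}$ where $A,B$ are the angles at $x$ and $y$, with $A+B=\pi-\alpha$. Hence $a = c\,\dfrac{\sin A}{\sin\alpha}$ and $b=c\,\dfrac{\sin B}{\sin\alpha}$, so
$$\min(a,b)=\frac{c\,\min(\sin A,\sin B)}{\sin\alpha}=\frac{c\,\min(\sin A,\sin B)}{2\sin(\alpha/2)\cos(\alpha/2)}.$$
It therefore suffices to show $\min(\sin A,\sin B)\le\cos(\alpha/2)$. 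Since $A+B=\pi-\alpha$, the minimizing angle among $A,B$ is at most $(\pi-\alpha)/2$, and on $[0,(\pi-\alpha)/2]$... wait, one of $A,B$ is $\le(\pi-\alpha)/2$ but I need a bound on $\min(\sin A,\sin B)$, and $\sin$ is not monotone on $[0,\pi]$. The correct observation is: WLOG $A\le B$; then $A\le(\pi-\alpha)/2<\pi/2$, so $\sin A\le\sin((\pi-\alpha)/2)=\cos(\alpha/2)$. Thus $\min(\sin A,\sin B)\le\sin A\le\cos(\alpha/2)$, which finishes the argument and even gives the inequality with $|\sin(\alpha/2)|$ in the denominator after restoring absolute values.

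The main obstacle I anticipate is purely bookkeeping: matching the \emph{oriented} angle $\alpha\in[0,2\pi)$ used in the paper's convention with the \emph{unoriented} interior angle of the triangle, and handling the degenerate and reflex cases so that $|\sin(\alpha/2)|$ is the right normalization in every case. The geometric core — law of sines plus the elementary bound $\min(\sin A,\sin B)\le\cos(\alpha/2)$ — is short once the setup is pinned down. I would present the case reduction first (reflex $\to$ non-reflex, collinear handled directly), then the two-line law-of-sines computation, and close with the $\sin A\le\cos(\alpha/2)$ estimate.
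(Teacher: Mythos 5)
Your argument is correct: the reduction of the oriented angle $\alpha\in[0,2\pi)$ to the interior angle (using $\sin((2\pi-\alpha)/2)=\sin(\alpha/2)$), the separate treatment of the collinear cases, and the law-of-sines computation with the key estimate $\min(\sin A,\sin B)\le\cos(\alpha/2)$ (valid because the smaller of $A,B$ is at most $(\pi-\alpha)/2<\pi/2$, where $\sin$ is increasing) together give the bound. Two small remarks. First, with $a=\di(z,x)$ and $b=\di(z,y)$, the side $a$ is opposite the angle at $y$, not at $x$, so your identification $a/\sin A$ with $A$ the angle at $x$ is mislabeled; since you immediately take the minimum over both sides and both angles, this does not affect the conclusion, but it should be stated correctly. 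Second, you can shortcut the whole case analysis (including the worry about $\cos(\alpha/2)=0$ and the abandoned altitude construction): writing $c^2=a^2+b^2-2ab\cos\alpha$ and using $a^2+b^2\ge 2ab$ gives $c^2\ge 2ab(1-\cos\alpha)=4ab\sin^2(\alpha/2)\ge 4\min(a,b)^2\sin^2(\alpha/2)$, and taking square roots yields the lemma directly for every $\alpha$, degenerate configurations included. That law-of-cosines route is the one most naturally suggested by the complex-number formulation ($\di(x,y)=|(x-z)-(y-z)|$), and it buys a uniform, case-free argument; your law-of-sines route is equally valid but pays for the factorization $\sin\alpha=2\sin(\alpha/2)\cos(\alpha/2)$ with the extra bookkeeping at $\alpha\in\{0,\pi\}$.
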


The accuracy and cost of the IPSR is described by the following theorem:

\begin{Theorem} \label{teorIPSR} If  $\G:[a,b] \rightarrow \C$  
is uniformly parameterized, $S$  an array sampling of $[a,b]$, $Q$ a positive real, and $f$ a polynomial of degree $n$ with Lipschitz constant $L$ in $\G$, the insertion procedure with control of singularity, IPSR, verifies:
\begin{itemize}
\item[a)]	Returns in less than $\left\lfloor\frac{b-a}{Q}+1\right\rfloor$  iterations.
\item[b)]	If it exits normally then the returning array gives the number of roots of $f$ inside $\G$.
\item[c)]	If it exits on error, then $\kappa_f(\G)\geq\frac{\sqrt{2}}{4Q}$.
\end{itemize}
\end{Theorem}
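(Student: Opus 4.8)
The plan is to treat the three items separately. Item (a) is a length-counting argument. When the loop splits $(s_i,s_{i+1})$ by inserting its midpoint, the two resulting subintervals have length $(s_{i+1}-s_i)/2$, and the test $r$ triggers an error exit precisely when the offending interval is shorter than $Q$; hence every split that is \emph{not} immediately followed by an error exit leaves behind subintervals of length larger than $Q$ (up to the factor $2$ coming from the midpoint insertion, which I would track carefully). Such subintervals are pairwise disjoint inside $[a,b]$, so at most $(b-a)/Q$ of them can ever be present, and each iteration adds exactly one subinterval to the current partition. Counting the original subintervals of $S$ separately (they cancel once one counts insertions rather than subintervals) and allowing for the single possibly-short pair produced by the terminal split of an error exit, the number of iterations is at most $\lfloor (b-a)/Q\rfloor+1=\lfloor (b-a)/Q+1\rfloor$.

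For item (b): a normal exit means the while-condition has become false, i.e.\ for every index $i$ of the returned array $(t_0,\dots,t_m)$ neither $p(t_i)$ nor $q_2(t_i)$ holds. ``No $p$'' says that consecutive images $f(\G(t_i))$, $f(\G(t_{i+1}))$ lie in connected sectors, so the polygon with those vertices has the property of connection; ``no $q_2$'' says, by Proposition~\ref{lt}, that no segment $f(\G)|_{[t_i,t_{i+1}]}$ hides a lost turn. Since the validity result of \cite{garciaart1} has exactly the hypothesis ``property of connection $+$ no lost turns'', the net number of $C_7$-to-$C_0$ crossings of this polygon equals $\ind(f(\G))$, which by the principle of the argument is the number of roots of $f$ enclosed by $\G$.

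For item (c): an error exit occurs inside an iteration of type $p$ or of type $q$, and in either case the test $r$ makes the offending pair $(s_i,s_{i+1})$ satisfy $s_{i+1}-s_i\le 2Q$, so (using that $\G$ is uniformly parameterized) the chord $|\G(s_i)-\G(s_{i+1})|$ is at most $2Q$. In a type-$p$ iteration $f(\G(s_i))$ and $f(\G(s_{i+1}))$ lie in non-connected sectors, so the continuous change of $\arg f(\G(t))$ over $[s_i,s_{i+1}]$ lies (modulo $2\pi$) in an interval bounded away from $0$ and $2\pi$ by at least the sector half-angle $\pi/8$; writing $f=a_n\prod_j(z-z_j)$ this change is the sum over $j$ of the changes $\alpha_j$ of $\arg(\G(t)-z_j)$, and applying Lemma~\ref{xyz} with vertex $z_j$ and $x=\G(s_i)$, $y=\G(s_{i+1})$ gives $1/\di(z_j,\G)\ge 2|\sin(\alpha_j/2)|/|\G(s_i)-\G(s_{i+1})|$; summing over $j$, using $\sum_j|\sin(\alpha_j/2)|\ge|\sin(\tfrac12\sum_j\alpha_j)|\ge\sin(\pi/8)$ and the chord bound, yields $\kappa_f(\G)\ge\sin(\pi/8)/Q\ge\sqrt2/(4Q)$. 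In a type-$q$ iteration $p(s_i)$ fails but $q_2(s_i)$ holds; here I would use the logarithmic-derivative identity $f'/f=\sum_j 1/(z-z_j)$, which gives the pointwise inequality $|f'(\G(t))/f(\G(t))|\le\kappa_f(\G)$, and extract from the $q_2$ inequality — exploiting the connectedness of $f(\G(s_i)),f(\G(s_{i+1}))$ to control $|f(\G(s_{i+1}))-f(\G(s_i))|$ against $|f'(\G(s_i))|(s_{i+1}-s_i)$ — that one of $|f(\G(s_i))|,|f(\G(s_{i+1}))|$ is small relative to $|f'(\G(s_i))|(s_{i+1}-s_i)$, so that evaluating the pointwise inequality at the appropriate endpoint delivers $\kappa_f(\G)\ge\sqrt2/(4Q)$.

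The hard part is item (c), and within it the type-$q$ estimate: the entire point of replacing the crude Lipschitz predicate $q$ of IPS by the derivative predicate $q_2$ is to obtain a bound on $\kappa_f(\G)$ that is linear in $1/Q$ with \emph{no} $n$-th root — passing instead through the lower inequality $|a_n|\di(Z,\G)^n\le\di(O,f(\G))$ of Proposition~\ref{DtoG} would reintroduce exactly the $n$-th root the construction is built to avoid. I expect the genuinely delicate points to be (i) producing the precise constant $\sqrt2/4$ (as opposed to merely some unspecified $O(1/Q)$), which forces careful bookkeeping of the sector half-angle $\pi/8$, the midpoint factor $2$, and — in type $q$ — the interplay of $f$ and $f'$ at the two endpoints, and (ii) degenerate type-$p$ configurations in which a root is almost encircled by the short arc $\G|_{[s_i,s_{i+1}]}$, where Lemma~\ref{xyz}'s bound is no longer tight but the corresponding term of $\kappa_f(\G)$ is already enormous; these should be isolated as a preliminary sub-case where $\di(Z,\G)\le 2\sqrt2\,Q$, which makes $\kappa_f(\G)\ge 1/\di(Z,\G)\ge\sqrt2/(4Q)$ immediate.
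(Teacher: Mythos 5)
Your architecture coincides with the paper's visible plan: part (b) is obtained from Proposition \ref{lt} (failure of $q_2$ excludes lost turns) together with the connection property and Henrici's $C_7$--$C_0$ crossing count, and part (c) is split into error exits of type $p$ and of type $q$, with Lemma \ref{xyz} driving the type-$p$ bound --- the paper announces exactly this decomposition (``bounds on this condition on errors of type $p$ and $q$ in two propositions''). Your counting argument for (a) and your type-$p$ estimate (summing $1/\di(z_j,\G)\ge 2|\sin(\alpha_j/2)|/|\G(s_i)-\G(s_{i+1})|$ over the roots, using subadditivity of $|\sin|$ against the $\pi/4$ gap between non-connected sectors, and the chord bound $\le 2Q$) are sound and give $\kappa_f(\G)\ge\sin(\pi/8)/Q>\sqrt2/(4Q)$; your preliminary ``almost encircled'' sub-case is actually unnecessary, since Lemma \ref{xyz} and the identity $|\sin((\alpha_j+2k\pi)/2)|=|\sin(\alpha_j/2)|$ already cover it.

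The genuine gap is the type-$q$ half of (c), which you sketch but do not close, and the sketch as written does not deliver the stated constant. Carrying out your plan: since $p(s_i)$ fails, the images $u=f(\G(s_i))$ and $w=f(\G(s_{i+1}))$ subtend an angle at most $\pi/2$ at the origin, whence $|u|+|w|-|u-w|\ge\min(|u|,|w|)/2$, and the $q_2$ inequality then gives $\min(|u|,|w|)\le 4\,|f'(\G(s_i))|\,(s_{i+1}-s_i)$. If the minimum is attained at $s_i$, the pointwise bound $|f'/f|\le\kappa_f(\G)$ yields only $\kappa_f(\G)\ge 1/\bigl(4(s_{i+1}-s_i)\bigr)$, and since the pair on which $q_2$ was tested is the pre-split pair of length up to $2Q$ (the $r$-test fires on the post-split half), this lands at $1/(8Q)$, a factor $2\sqrt2$ short of $\sqrt2/(4Q)$. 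If the minimum is attained at $s_{i+1}$, the derivative in $q_2$ is evaluated at the wrong endpoint and your pointwise inequality cannot be applied there without an extra argument relating $|f'(\G(s_i))|$ to the root distances from $\G(s_{i+1})$. This endpoint mismatch and the constant bookkeeping are exactly the content the paper delegates to its separate proposition on type-$q$ error returns, and they are missing from your proposal; as it stands, item (c) is only established for type-$p$ exits.
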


The condition number $\kappa_f(\G)$ and the distance of $\G$ from the nearest root of $f$ are related in the following way: 

\begin{propositionn} \label{distkappa} If $\kappa_f(\G)\geq \frac{\sqrt{2}}{4Q}$ then there is a root $z_i$ with $\di(z_i,\G) \leq \frac{4nQ}{\sqrt{2}}$.
\end{propositionn}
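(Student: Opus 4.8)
The plan is to observe that this is essentially a pigeonhole (averaging) argument on the defining sum of the condition number. Recall that $\kappa_f(\G)=\sum_{i=1}^{n}\frac{1}{\di(z_i,\G)}$ is a sum of exactly $n$ nonnegative terms. If this sum is at least $\frac{\sqrt{2}}{4Q}$, then not all of the $n$ summands can be below the average value $\frac{1}{n}\cdot\frac{\sqrt{2}}{4Q}$; hence there must exist an index $j$ with $\frac{1}{\di(z_j,\G)}\geq\frac{\sqrt{2}}{4nQ}$.

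From $\frac{1}{\di(z_j,\G)}\geq\frac{\sqrt{2}}{4nQ}$, taking reciprocals (which is legitimate since $\di(z_j,\G)>0$, as otherwise $\G$ would be singular with respect to $f$ and $\kappa_f(\G)$ would be undefined) gives directly $\di(z_j,\G)\leq\frac{4nQ}{\sqrt{2}}$, which is the claimed root. There is no real obstacle here: the only point to state carefully is that the distance is strictly positive so the inversion is valid, and that the number of summands is exactly $n$ (each root counted with multiplicity contributes one term), so the averaging constant is $n$ and not something larger. This bound is, of course, rather crude — it worsens linearly in the degree $n$ — but unlike the $n$-th-root bound obtainable from IPS via Proposition \ref{GtoD}, it degrades only polynomially, which is what makes it usable in the cost analysis of the partition procedure in the next section.
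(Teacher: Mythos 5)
Your proof is correct: the maximum of the $n$ summands $\frac{1}{\di(z_i,\G)}$ is at least their average $\frac{\kappa_f(\G)}{n}\geq\frac{\sqrt{2}}{4nQ}$, and inverting gives the claim. The paper states this proposition without an explicit proof, and this elementary averaging argument is exactly the intended one, so there is nothing to add.
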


The insertion procedure with control of root proximity prevents an excessive number of iterations, controlled by input parameter $Q$. The IPSR effectively computes the number of roots inside $\G$ if there are not close roots. But if there are roots on $\G$ or near it, the procedure can return with error, signaling this fact, or can return normally with an array valid to compute the number of roots, always in less than $\left\lfloor\frac{b-a}{Q}+1\right\rfloor$  iterations. 

%vvvvvvvvvvvvvvvvvvvvvvvvvvvvv
Perhaps it is also possible to proof that it returns normally in less than $O(\kappa_f(\G)^2\log(\kappa_f(\G)))$ iterations, a result similar to theorem \ref{teorIP}. It should be noted that this dependence on the distance to the  roots is consistent with other algorithms that compute the number of roots in a region  \cite{renegar,pan}. In any case, this use  of the condition number $\kappa_f(\G)$ for  theoretical analysis  is not necessary for our purposes of root finding.
% ^^^^^^^^^^^^^^^^^^
% PONER DESPUES CON EL TEOREMA IPRN:
%

\section{Recursive use of the procedure}
\label{recursec}

In this section we detail a geometric algorithm for root finding, following the common pattern depicted in the introduction, that is, an inclusion test to decide if there is a root in a region, and a recursive procedure (RDP, figure \ref{recur}), to subdivide the region and locate the roots with the  needed accuracy. 
With the result of theorem \ref{teorIPSR}, the IPSR computes the number of roots inside $\G$ by the winding number of $\D=f(\G)$. This procedure will be used as inclusion test. We now describe the recursive procedure.

The size of a region $P$ is measured by its rectangular diameter $\dr(P)$, that is precisely defined in subsection \ref{splitting}. An {\em approximation of a root up to an accuracy of $A>0$} is a region of rectangular diameter lesser than $A$ that contains the root.

\begin{figure}%[ht!]
  \centering 
  \fbox{
  \begin{minipage}{.9\textwidth}
  \vspace{.25cm}
  \hspace{0cm}\textbf{\textsl{Recursive Division Procedure:}} To find the roots of a polynomial $f$ inside a region $P$.

	\hspace{0.5cm} {\bf Input parameters:} A region $P$ of the complex plane, a polynomial $f$, and a parameter of accuracy $A>0$.
   	
	\hspace{0.5cm} {\bf Output:} The array $R=(P_1,P_2,\dots,P_k)$ (containing $k$ approximations   up to an accuracy of $A>0$  of all the roots of $f$  in $P$) and the array $N=(n_1,n_2,\dots,n_k)$ (of numbers  $n_i\geq 1$ such that each $P_i$ contains $n_i$ roots of $f$ counting multiplicity).
 
   \hspace{0.5cm} {\bf Global structures used:} Two arrays $R$ and $N$, initially empty.

   \hspace{0.5cm} {\bf Method:} $\RDP(P,A)$  \{

   \hspace{1cm} If windN$(P)=0$, return. [exit 1] 
   	     
   \hspace{1cm} If $\dr(P)<A$ then

   \hspace{1.5cm} Add $P$ to $R$ and windN$(P)$ to $N,$ and  return. [exit 2] 
   
	\hspace{1cm} Else

   \hspace{1.5cm} Divide $P$ in four subregions $P_1,P_2,P_3$ and $P_4$. %, $i=1,2,3,4$, of rectangular 

%   \hspace{1.7cm} diameter lesser than that of $P,$ such that the  computation 
   
%   \hspace{1.7cm} of windN$(P_i)$ with $\IPSR(P_i,Q)$ for some value $Q$ does not 
   
%   \hspace{1.7cm} return with error.

   \hspace{1.5cm} For each $i=1,2,3,4$, do 

   \hspace{2cm} $\RDP(P_i,A)$.     

   \hspace{1cm}  [exit 3]  

  \hspace{0.5cm}  \}

  \vspace{.25cm}
  \end{minipage}
  }% de fbox
  \caption{Recursive Division Procedure (RDP)  for polynomial root finding. Two items remain to be specified: the parameter $Q$ to compute windN$(P)$ using IPSR$(P,Q)$, and the decomposition method of ``Divide P''.}
  \label{recur}
\end{figure}

The Recursive Division Procedure (RDP, figure~\ref{recur}) divides the initial region into subregions progressively smaller, until the diameter goes below the accuracy. These regions are inserted into the array $R$, which at the end of RDP contains approximations to the roots. Each  region $P_i$ included in $R$ is labeled with the number of roots that it contains, $n_i$. The case of $n_i> 1$ corresponds to either a multiple root or a cluster (a set of roots with distance lesser than $A$ between them).

%In addition to the subregion approximating a root, we must take into account the number of roots that it contains, for the case of multiple or very close roots. RDP works over the arrays $R$ of regions and $N$ of numbers, initially empty. It gradually adds to $R$ subregions approximating roots and the number of roots inside them to $N$.

Two items of the RDP in figure~\ref{recur} must be precised. The first item is the computation of windN($P$), the winding number of the image by $f$ of the border of $P$. This computation is made by IPSR($P, Q$) for some value of $Q$, as previously mentioned. This value $Q$ must be chosen as a compromise between a large value (that gives us lower computational cost by theorem 3a), and a value small enough to not cause an error (Theorem 3b) in the initial region or in the subregions that arise in the recursive decomposition. The computation must be done without error because the winding number is needed for processing a region (that is, either discarding or decomposing it). 

The second item is the decomposition ``Divide $P$'', which makes four subregions $P_1, P_2, P_3 $ and $ P_4$ from $P$. This should be done with the following three requirements: a) the subregions obtained $P_1, P_2, P_3 $ and $ P_4$ must be a set partition of $P$ (that is, $P =\bigcup  P_i$ and $P_i \bigcap P_j = \emptyset$), so at the end each root will be in a component of $R$ and only in one, b) the diameter must be decreasing, 
$\dr(P_i)<\dr(P)$, and c) each region obtained $P_i$  must be such that IPSR($P_i, Q$) returns without error for the chosen value $Q$, as discussed above. We will  describe a decomposition method and a value of  $Q$ such that these requirements are fulfilled.

In the next subsection \ref{splitting} we make a first try of a decomposition method ``Divide P''. We cut successively in half the region, first horizontally and then vertically, and we will show that the resulting subregions are decreasing in rectangular diameter. However the arising subregions can cause an error in IPSR. In the following subsection \ref{managingerror}, we make a second and definitive try of method ``Divide P'', and show that a value of $Q$ lesser than $\frac{A\sin(\pi/8)}{n_0n4\sqrt{2}}$ does not produce error with IPSR in the subregions, being $n_0$ the number of roots in the original region. In the final subsection \ref{termination} we show that the RPD ends in a finite number of calls, that the claim on the output of the figure \ref{recur} is verified at the end, and we calculate its computational cost.

%Note that the division of $P$ must be performed in such a way that the procedure IPSR does not give an error computing the winding number of (the image of the border of) the subregions, windN$(P_i)$. This winding number, that is the number of roots inside $P_i$, is accessed later in the subsequent call $\RDP(P_i,A)$.  

Before this, a comment about the type of regions that we consider. 
For simplicity, we require that the initial region, as well as those that arise in its subdivisions, are plane regions whose border is a simple closed curve $\G$ (Jordan curve \cite{kolmogorov}). These regions are connected. A non connected region is, for instance, the interior of two disjoint circles, and its border (two circumferences) is not a a simple curve. Non connected regions can be treated in a way  similar to the connected ones, because the number of roots inside a non connected region is the sum of the winding numbers of its connected components. However, the assumption of connectedness simplify the reasonings.

We will produce the subregions by cuts along straight lines. To prevent such cuts to cause a subregion non connected, we enforce that the initial region $P$ is 
convex (i. e., that if a straight segment has its endpoints in $P$, all the segment is inside $P$). So, the subregions will be convex (hence connected). To find the roots in a non convex region, the RDP should be applied to its convex hull, or to its decomposition into convex parts.

%We define a method to divide $P$ in four subregions, assuring first that its diameter decreases, and  second that the image by $f$ of the borders of the subregions does not produce error with IPSR for some value of $Q$.

\subsection{Divide into Smaller Pieces: a first try}
\label{splitting}

We consider each plane region as a closed set, that is, including its border. For a plane region $P$, its {\em top horizontal supporting line} $l_T$ is the upper horizontal straight line that has some point in common with $P$. Likewise,  the {\em bottom horizontal supporting line} $l_B$ of $P$ is the lower horizontal straight line that has some point in common with $P$. The {\em vertical diameter} $\dm_V(P)$ is the distance between these lines $\di(l_{T},l_B)$. That is $\dm_V(P)=\min_{x\in l_T,\,y\in l_B}\di(x,y)$.

 In a similar manner, the {\em left vertical supporting line} $l_L$ is the leftmost vertical straight line that has some point in common with $P$, and the {\em right vertical supporting line} $l_R$ is the rightmost straight line that has some point in common with  $P$. The {\em horizontal diameter} $\dm_H(P)$ is the distance between these lines $\di(l_L,l_R)$. That is $\dm_H(P)=\min_{x\in l_L,\,y\in l_R}\di(x,y)$. The figure \ref{diameters} depicts these concepts, and also the classical {\em diameter} of $P$ defined by $\dm(P)= \max_{x,y\in P}\di(x,y)$. 

\begin{figure}[h!]
  \centering \includegraphics[width=.6\textwidth]{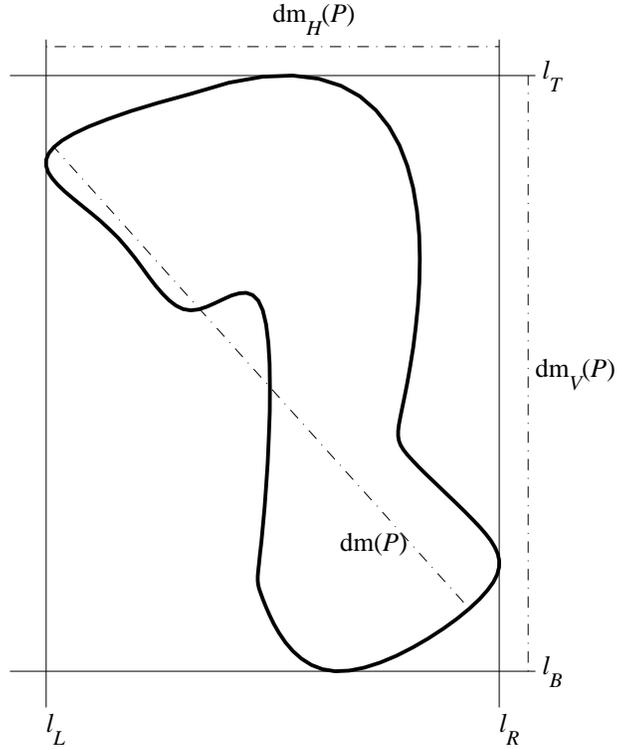}
  \parbox{.9\textwidth}{\caption{Supporting lines and diameters for a plane region $P$.}\label{diameters}}
\end{figure}

For regions defined by straight segments, as the polygonals arising in the IPSR, the horizontal and vertical diameters are easier to compute than the classical diameter. They are related by the following bounds.

\begin{lemman} \label{contain} If two plane regions $P_1,P_2$ verify $P_1 \subset P_2$, then $\dm(P_1)\leq\dm(P_2)$, $\dm_H(P_1)\leq\dm_H(P_2)$, and $\dm_V(P_1)\leq\dm_V(P_2)$.
\end{lemman}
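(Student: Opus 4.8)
The plan is to reduce all three inequalities to one elementary fact — that an inclusion of point sets preserves the extreme values of any real‑valued function on those sets — together with the identification of the horizontal and vertical diameters with coordinate spans. Throughout I use that the regions considered are compact (their boundaries are Jordan curves), so all the relevant suprema and infima are attained.

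First I would dispatch the classical diameter. Since $P_1\subset P_2$, every pair $x,y$ with $x,y\in P_1$ also has $x,y\in P_2$, so $\{\di(x,y):x,y\in P_1\}\subset\{\di(x,y):x,y\in P_2\}$; taking maxima (which exist by compactness) gives $\dm(P_1)\le\dm(P_2)$ at once.

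Next, for the horizontal diameter I would first record the auxiliary identity: for any region $P$, writing $x_{\min}(P)=\min_{z\in P}\text{Re}(z)$ and $x_{\max}(P)=\max_{z\in P}\text{Re}(z)$, the left vertical supporting line $l_L$ is exactly $\{z:\text{Re}(z)=x_{\min}(P)\}$ and the right vertical supporting line $l_R$ is $\{z:\text{Re}(z)=x_{\max}(P)\}$ — this is immediate from the definition of $l_L$, $l_R$ as the extreme vertical lines meeting $P$, these extremes being attained by compactness — whence $\dm_H(P)=\di(l_L,l_R)=x_{\max}(P)-x_{\min}(P)$. Then $P_1\subset P_2$ forces $x_{\min}(P_1)\ge x_{\min}(P_2)$ and $x_{\max}(P_1)\le x_{\max}(P_2)$, since the min (resp. max) of $\text{Re}$ over a subset is no smaller (resp. no larger) than over the superset; subtracting gives $\dm_H(P_1)=x_{\max}(P_1)-x_{\min}(P_1)\le x_{\max}(P_2)-x_{\min}(P_2)=\dm_H(P_2)$. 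The vertical‑diameter inequality is the same argument verbatim with $\text{Re}$ replaced by $\text{Im}$ and $l_L,l_R$ replaced by the horizontal supporting lines $l_B,l_T$.

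There is no genuine obstacle here; the only step worth spelling out is the identification $\dm_H(P)=x_{\max}(P)-x_{\min}(P)$ (and its vertical analogue), i.e. checking that the supporting lines defined abstractly as ``the extreme lines with a point in common with $P$'' really are the level lines of the coordinate functions at their extreme values — which is precisely where compactness of $P$ is used so that those extreme lines actually touch $P$. Everything else is monotonicity of $\sup$ and $\inf$ under set inclusion.
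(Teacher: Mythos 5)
Your proof is correct and follows essentially the same route as the paper: the classical-diameter case is handled by monotonicity of the maximum under set inclusion, and the horizontal/vertical cases by observing that the supporting lines of $P_1$ lie between (your $x_{\min}$/$x_{\max}$ formulation is just a coordinate version of) the supporting lines of $P_2$. You merely spell out the identification $\dm_H(P)=x_{\max}(P)-x_{\min}(P)$ more explicitly than the paper does.
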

\begin{proof}
For the classical diameter is clear because the maximum, in the definition of $\dm(P_2)$, is taken over a greater set than in $\dm(P_1)$. For the horizontal diameter, note that the two vertical supporting lines of $P_1$ are between the two vertical supporting lines of $P_2$, perhaps  coinciding with some of them. Hence its distance is lesser. Likewise for the vertical diameter.

\qedhere
\end{proof}

We also define the {\em rectangular diameter} $\dr(P)=\sqrt{\dm_H(P)^2 +\dm_V(P)^2}$. 

\begin{propositionn} \label{chain} For a plane region $P$, it is verified:
$$
\max(\dm_H(P),\dm_V(P))\leq\dm(P)\leq \dr(P)
$$
\end{propositionn}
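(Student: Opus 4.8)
The plan is to prove the two inequalities of Proposition~\ref{chain} separately, both by elementary geometric reasoning about supporting lines and coordinates. Throughout I will use the standard coordinates $z=x+iy$ on $\C$, writing a point of $P$ as a pair $(x,y)\in\R^2$.

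For the left inequality $\max(\dm_H(P),\dm_V(P))\leq\dm(P)$: by symmetry it suffices to show $\dm_H(P)\leq\dm(P)$. First I would pick a point $u\in P$ lying on the left vertical supporting line $l_L$ and a point $v\in P$ lying on the right vertical supporting line $l_R$ (such points exist by the definition of the supporting lines, using that $P$ is closed and bounded). Then the horizontal distance between $u$ and $v$ equals $\di(l_L,l_R)=\dm_H(P)$, and since the Euclidean distance between two points is at least the absolute difference of their $x$-coordinates, we get $\dm_H(P)=|x_u-x_v|\leq\di(u,v)\leq\max_{p,q\in P}\di(p,q)=\dm(P)$. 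The same argument with horizontal supporting lines and $y$-coordinates gives $\dm_V(P)\leq\dm(P)$, hence the maximum of the two is at most $\dm(P)$.

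For the right inequality $\dm(P)\leq\dr(P)=\sqrt{\dm_H(P)^2+\dm_V(P)^2}$: the key observation is that $P$ is contained in the closed axis-aligned rectangle $\mathcal R$ bounded by its four supporting lines $l_L,l_R,l_B,l_T$; indeed any point of $P$ lies (weakly) to the right of $l_L$, to the left of $l_R$, above $l_B$ and below $l_T$, by the very definition of those lines as the extreme lines meeting $P$. This rectangle has width $\dm_H(P)$ and height $\dm_V(P)$, so by Lemma~\ref{contain} (applied to $P\subset\mathcal R$, but here I only need the classical-diameter part) $\dm(P)\leq\dm(\mathcal R)$. Finally the diameter of an axis-aligned rectangle is its diagonal, $\dm(\mathcal R)=\sqrt{\dm_H(\mathcal R)^2+\dm_V(\mathcal R)^2}=\sqrt{\dm_H(P)^2+\dm_V(P)^2}=\dr(P)$, because for any two points $p,q$ of the rectangle $\di(p,q)^2=(x_p-x_q)^2+(y_p-y_q)^2\leq\dm_H(\mathcal R)^2+\dm_V(\mathcal R)^2$, with equality at opposite corners. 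Chaining the two bounds gives $\dm(P)\leq\dr(P)$.

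There is no real obstacle here; the only point requiring a little care is the assertion that the extreme points on the supporting lines actually belong to $P$ — this uses that $P$ is a compact (closed and bounded) set, which the paper has already assumed for the regions under consideration — and the observation that $P$ sits inside the bounding rectangle, which is immediate from the definitions of $l_L,l_R,l_B,l_T$. Everything else is the elementary fact that $|x_1-x_2|\le \sqrt{(x_1-x_2)^2+(y_1-y_2)^2}\le \sqrt{w^2+h^2}$ for points in a $w\times h$ rectangle, so I would keep the write-up short and not belabor the coordinate computations.
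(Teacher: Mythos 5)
Your proof is correct and follows essentially the same route as the paper: for the left inequality you pick points of $P$ on the two supporting lines and compare distances, and for the right inequality you enclose $P$ in the axis-aligned rectangle bounded by its supporting lines (the paper's HV-envelope), apply Lemma~\ref{contain}, and identify the rectangle's diameter with its diagonal. The only differences are cosmetic (explicit coordinates and the compactness remark), so no further comparison is needed.
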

\begin{proof}
For the first inequality, note that the supporting lines of $P$ have as least one point in common with $P$. Being $x_0$ one of these points for the upper horizontal supporting line, that is $x_0\in P\cap l_T$, and $y_0$ for the lower one, $y_0\in P\cap l_B$, using the definitions of $\dm_V(P)$ and $\dm(P)$ we have $\dm_V(P)=\min_{x\in l_T,\,y\in l_B}\di(x,y)\leq \di(x_0,y_0)\leq  \max_{x,\,y\in P}\di(x,y)=\dm(P)$. Likewise $\dm_H(P)\leq \dm(P)$, hence $\max(\dm_H(P),\dm_V(P))\leq\dm(P)$. 

For the second inequality, we define the {\em HV-envelope} of $P$, $\text{Env}_{HV}(P)$, as the bounded rectangle delimited by the horizontal and vertical supporting lines. As $P\subset\text{Env}_{HV}(P)$, by the lemma \ref{contain} we have that $\dm(P)\leq \dm(\text{Env}_{HV}(P))$. Furthermore, the diameter of the rectangle $\text{Env}_{HV}(P)$, of base $\dm_H(P)$ and height $\dm_V(P)$, is the distance between opposed vertexes, then $\dm(\text{Env}_{HV}(P))=\sqrt{\dm_H(P)^2 +\dm_V(P)^2}=\dm_{Rect}(P)$. Chaining with the previous inequality, we conclude.
\qedhere
\end{proof}

To divide a figure in lesser parts, we define the operators $T$, $B$, $L$ and $R$ acting on a plane region $P$. If $m_H(P)$ is the straight line in the middle between the horizontal supporting lines, $T(P)$ is the intersection of $P$ with the upper half-plane defined by $m_H(P)$, and $B(P)$ is the intersection of $P$ with the lower half-plane. Likewise, If $m_V(P)$ is the straight line in the middle between the vertical supporting lines, $L(P)$ is the intersection of $P$ with the left half-plane defined by $m_V(P)$, and $R(P)$ is the intersection of $P$ with the right half-plane. The operators $T$ and $B$ are said of type {\em horizontal}, while $L$ and $R$ are said of type {\em vertical}. The picture \ref{operators} shows several compositions of these operators applied to a non convex region.

\begin{figure}[h!]
  \centering \includegraphics[width=.4\textwidth]{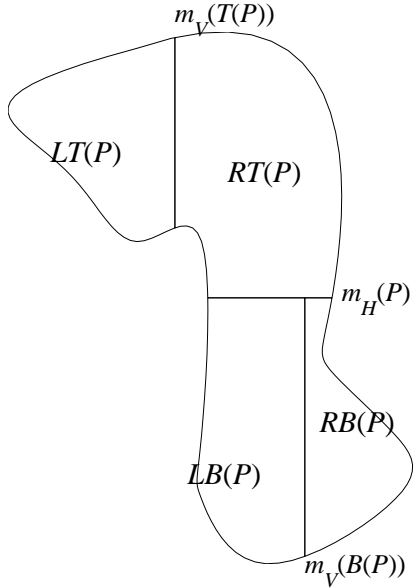}
  \parbox{.9\textwidth}{\caption{Applying the operators $T$, $B$, $L$, $R$ to  $P$.}\label{operators}}
\end{figure}

\begin{lemman} \label{half} The operators decrease the diameters, verifying:
$$\begin{array}{cc}
\dm_H(T(P))\leq \dm_H(P), & \dm_H(L(P))\leq \dfrac{\dm_H(P)}{2}, \\ 
 \dm_V(T(P))\leq \dfrac{\dm_V(P)}{2} & \text{and }\dm_V(L(P))\leq \dm_V(P) 
\end{array} 
$$
The same formulas are valid changing $T$ to $B$ and $L$ to $R$. 
\end{lemman}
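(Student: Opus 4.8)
The plan is to separate the four inequalities into two groups according to whether the operator cuts $P$ in the direction whose diameter we are estimating, or in the perpendicular direction.

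First I would dispose of the two ``monotone'' inequalities $\dm_H(T(P))\leq\dm_H(P)$ and $\dm_V(L(P))\leq\dm_V(P)$. By definition $T(P)$ is the intersection of $P$ with the upper half-plane bounded by $m_H(P)$, so $T(P)\subseteq P$, and likewise $L(P)\subseteq P$. Both inequalities then follow at once from the monotonicity of the horizontal and vertical diameters under inclusion, which is exactly Lemma \ref{contain}.

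Next I would treat the two ``halving'' inequalities $\dm_V(T(P))\leq\dm_V(P)/2$ and $\dm_H(L(P))\leq\dm_H(P)/2$. For the first, the key observation is that $T(P)$ lies inside the closed horizontal strip whose upper edge is the top supporting line $l_T$ of $P$ and whose lower edge is the midline $m_H(P)$: indeed $T(P)\subseteq P$ lies at or below $l_T$, and by construction $T(P)$ lies at or above $m_H(P)$. The width of this strip is $\di(l_T,m_H(P))=\dm_V(P)/2$ by the definition of $m_H(P)$ as the line midway between $l_T$ and $l_B$. Since the top and bottom horizontal supporting lines of $T(P)$ each touch $T(P)$, and $T(P)$ is contained in the strip, those two supporting lines must themselves lie within the strip; hence the distance between them, which is $\dm_V(T(P))$, is at most the strip width $\dm_V(P)/2$. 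The argument for $\dm_H(L(P))\leq\dm_H(P)/2$ is word-for-word the same with ``horizontal'' and ``vertical'' exchanged, using the vertical strip between $l_L$ and $m_V(P)$, of width $\dm_H(P)/2$. Finally, the statements with $B$ replacing $T$, and with $R$ replacing $L$, follow from the identical arguments after reflecting the plane in a horizontal line (which swaps $T\leftrightarrow B$, $l_T\leftrightarrow l_B$) or in a vertical line (which swaps $L\leftrightarrow R$, $l_L\leftrightarrow l_R$); such reflections preserve both the horizontal and the vertical diameters, so nothing changes.

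I do not expect a genuine obstacle here; the only point that needs a moment of care is the remark that a supporting line of a set lying inside a strip must itself lie inside that strip, so that in the halving cases the diameter in the cut direction is controlled by the strip width rather than merely by the (weaker) fact $T(P)\subseteq P$. Everything else is bookkeeping with the definitions of the supporting lines and the midlines.
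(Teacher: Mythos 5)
Your proof is correct and follows essentially the same route as the paper: the two non-halved inequalities come from the inclusion $T(P),L(P)\subseteq P$ via Lemma \ref{contain}, and the halved ones come from observing that the supporting lines of the cut piece in the cut direction are trapped between the original supporting line and the midline, a strip of width half the original diameter. Your version just spells out the strip argument and the reflection symmetry for $B$ and $R$ in more detail than the paper does.
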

\begin{proof}
For the horizontal diameter, the first inequality is a consequence  of lemma \ref{contain}. The second comes from that the  vertical supporting lines of $L(P)$ are between $l_L$ and $m_V(P)$, hence the horizontal diameter are lesser or equal than the distance $\di(l_L,m_V(P))$, that is the half of $\dm_H(P)$. 

For the vertical diameter, the reasoning is similar, and also for operators $B$ and $R$.

\qedhere
\end{proof}

We will find the rate of diameter decreasing under the  alternating applications of horizontal and vertical divisions to the initial region. An operator is {\em of type ${\cal O}_{m}$}, for $m\geq 1$, if it is the composition of $m$ operators of type horizontal alternating with $m$ of type vertical, starting with an horizontal operator. That is, $O$ is of type ${\cal O}_{m}$ if $O=V_mH_mV_{m-1}H_{m-1}\cdots V_1H_1$, being $H_i\in\{T,B\}$ and $V_i\in\{L,R\}$. There are $2^{2m}$ operators of type ${\cal O}_{m}$. (See figure \ref{ops23}).

%**cambiar rleft y right en las figuras **
%** y dejer solo la de nivel dos, sin deformar **

\begin{figure}[h!]
  \centering \includegraphics[width=.5\textwidth]{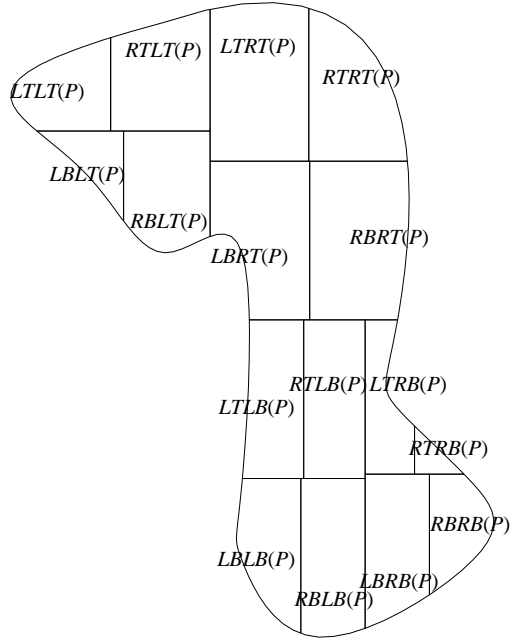}
  \parbox{.9\textwidth}{\caption{The regions arising from applying all the operators in ${\cal O}_{2}$ to a region.}\label{ops23}}
\end{figure}

\begin{lemman} \label{decrease} For $m\geq 1$, if $O_m$ is an operator of type ${\cal O}_{m}$
$$
\dr(O_m(P))\leq \dfrac{\dr(P)}{2^m}
$$
\end{lemman}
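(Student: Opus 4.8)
The plan is to prove a slightly stronger statement by induction on $m$, namely that an operator $O_m$ of type $\mathcal{O}_m$ contracts \emph{both} coordinate diameters by the factor $2^m$:
$$
\dm_H(O_m(P))\leq\frac{\dm_H(P)}{2^m}\qquad\text{and}\qquad\dm_V(O_m(P))\leq\frac{\dm_V(P)}{2^m}.
$$
Once this is established, the lemma follows at once from the definition $\dr(P)=\sqrt{\dm_H(P)^2+\dm_V(P)^2}$: substituting the two bounds and factoring $1/2^m$ out of the square root gives $\dr(O_m(P))\leq\dr(P)/2^m$. The reason for strengthening the statement is that the claim about $\dr$ alone does not propagate through the recursion, because a single horizontal or vertical operator treats the two diameters asymmetrically (Lemma \ref{half}); one has to carry $\dm_H$ and $\dm_V$ along separately.

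For the base case $m=1$ I would write $O_1=V_1H_1$ with $H_1\in\{T,B\}$ and $V_1\in\{L,R\}$, and recall that a composition is read right to left, so $H_1$ is applied first. By Lemma \ref{half}, $H_1$ leaves $\dm_H$ unchanged (up to $\leq$) and halves $\dm_V$, and then $V_1$ halves $\dm_H$ and leaves $\dm_V$ unchanged; chaining the two inequalities yields $\dm_H(O_1(P))\leq\dm_H(P)/2$ and $\dm_V(O_1(P))\leq\dm_V(P)/2$. For the inductive step I would write $O_{m+1}=V_{m+1}H_{m+1}O_m$, where the inner factor $O_m=V_mH_m\cdots V_1H_1$ is of type $\mathcal{O}_m$; applying the induction hypothesis to the region $O_m(P)$ bounds its coordinate diameters by $\dm_H(P)/2^m$ and $\dm_V(P)/2^m$, and then the same two-step argument applied to the extra pair $V_{m+1}H_{m+1}$ contributes one further factor of $2$ to each, producing the bounds with $2^{m+1}$. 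Feeding these into the definition of $\dr$ as above closes the induction.

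The computation is entirely routine; the only point requiring a little care — and the one I expect to be the main (mild) obstacle — is the choice of inductive hypothesis. One must propagate $\dm_H$ and $\dm_V$ through the recursion rather than $\dr$ by itself, and keep straight which operator of each alternating pair acts on which diameter (the horizontal operator controls $\dm_V$, the vertical one controls $\dm_H$), so that each pass through a horizontal--vertical pair halves both diameters simultaneously.
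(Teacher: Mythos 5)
Your proof is correct and is essentially the paper's argument: induction on $m$, peeling off one horizontal--vertical pair per step and invoking Lemma \ref{half}, with your version merely carrying the two bounds $\dm_H(O_m(P))\leq \dm_H(P)/2^m$ and $\dm_V(O_m(P))\leq \dm_V(P)/2^m$ separately before combining them into $\dr$. One small remark: the strengthening you describe as necessary is not actually needed, since a full pair $VH$ already satisfies $\dr(VH(Q))\leq \dr(Q)/2$ for any region $Q$ (both coordinate diameters are halved across the pair), so the induction on $\dr$ alone closes when carried out pair by pair, which is exactly how the paper proceeds.
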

\begin{proof}
By induction on $m$. We denote as above with $H$ an operator that can be $T$ or $B$, and with $V$ another that can be $L$ or $R$. For $m=1$, we have that $O=VH$. Chaining some inequalities of lemma \ref{half} several times:
\begin{multline*}
\dr(VH(P))= \sqrt{\dm_H(VH(P))^2 +\dm_V(VH(P))^2} \leq \\ 
\leq \sqrt{\dfrac{\dm_H(H(P))^2}{2^2} +\dm_V(H(P))^2}  \leq 
\dfrac{\sqrt{\dm_H(P)^2 +\dm_V(P)^2}}{2}=\dfrac{\dr(P)}{2}
\end{multline*}

For $m>1$, note that if $O_m$ is of type ${\cal O}_{m}$, then  $O=VHO_{m-1}$ with  $O_{m-1}$ of type ${\cal O}_{m-1}$. Applying again proposition \ref{chain} and lemma \ref{half} several times,  we have:
\begin{multline*}
\dr(O_m(P))=\\
=\dr(VHO_{m-1}(P)) \leq \sqrt{\dm_H(VHO_{m-1}(P))^2 +\dm_V(VHO_{m-1}(P))^2} \leq \\
\leq \sqrt{\dfrac{\dm_H(HO_{m-1}(P))^2}{2^2}+\dm_V(HO_{m-1}(P))^2 } \leq \\
\leq \sqrt{\dfrac{\dm_H(O_{m-1}(P))^2}{2^2} +\dfrac{\dm_V(O_{m-1}(P))^2}{2^2}} 
\leq  \\ \leq
\dfrac{\sqrt{\dm_H(O_{m-1}(P))^2 +\dm_V(O_{m-1}(P))^2}}{2}
\end{multline*}
This, applying  the hypotesis of induction, is lesser or equal than: 
\begin{multline*}
\dfrac{
  \sqrt{\left(\dfrac{\dm_H(P)}{2^{m-1}}\right)^2 
       +\left(\dfrac{\dm_V(P)^2}{2^{m-1}}\right)^2
  }
}{2} 
= \\ =\frac{\sqrt{\dm_H(P)^2 +\dm_V(P)^2}}{2^m}=\frac{\dr(P)}{2^m} $$
\end{multline*}

\qedhere
\end{proof}

In particular, $\dr(O_{m+1}(P))\leq\frac{\dr(O_{m}(P))}{2} $. With respect to the  classical diameter, in general it is not true that $\dm(O_m(P))\leq \dfrac{\dm(P)}{2^m}$. For example if $P$ is the circle of radius 1, $\dm(P)=2$ but $TR(P)$ is a circular sector with $\dm(TR(P))=\sqrt{2}>\dfrac{\dm(P)}{2}$. However we have the following:

\begin{Corollaryn} If $O_m$ is an operator of type ${\cal O}_{m}$
$$
\dm(O_m(P))\leq \dfrac{\dr(P)}{2^m}
$$
\end{Corollaryn}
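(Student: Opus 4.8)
The plan is to obtain this as an immediate consequence of the two results just established: Proposition \ref{chain}, which gives $\dm(Q) \leq \dr(Q)$ for any plane region $Q$, and Lemma \ref{decrease}, which controls the rectangular diameter under operators of type $\mathcal{O}_m$.

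First I would apply Proposition \ref{chain} with the region $Q = O_m(P)$, yielding
$$
\dm(O_m(P)) \leq \dr(O_m(P)).
$$
Then I would invoke Lemma \ref{decrease}, which states precisely that $\dr(O_m(P)) \leq \dfrac{\dr(P)}{2^m}$. Chaining the two inequalities gives $\dm(O_m(P)) \leq \dfrac{\dr(P)}{2^m}$, which is the claim.

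There is essentially no obstacle here; the only subtlety worth a remark is that the bound is stated in terms of $\dr(P)$ rather than $\dm(P)$ on the right-hand side, and this is necessary: as the example of the unit circle already noted before the statement shows, $\dm(O_m(P)) \leq \dfrac{\dm(P)}{2^m}$ is false in general, since a single operator can cut the diameter along a direction that is not aligned with the diameter of $P$. Passing through the rectangular diameter, which dominates $\dm$ by Proposition \ref{chain} and halves cleanly under alternating horizontal/vertical cuts by Lemma \ref{half}, is exactly what repairs this.

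\begin{proof}
By Proposition \ref{chain} applied to the region $O_m(P)$ we have $\dm(O_m(P)) \leq \dr(O_m(P))$, and by Lemma \ref{decrease} we have $\dr(O_m(P)) \leq \dfrac{\dr(P)}{2^m}$. Combining these two inequalities gives $\dm(O_m(P)) \leq \dfrac{\dr(P)}{2^m}$.
\end{proof}
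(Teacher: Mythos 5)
Your proof is correct and is exactly the paper's own argument: apply Proposition \ref{chain} to $O_m(P)$ and chain with Lemma \ref{decrease}. The remark about why the right-hand side must involve $\dr(P)$ rather than $\dm(P)$ matches the paper's own observation with the unit circle.
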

\begin{proof}
Applying  proposition \ref{chain}, $\dm(O_m(P))\leq \dr(O_m(P))$, and chaining with the above lemma. 

\qedhere
\end{proof}

By the above corollary, we have that the division of the region by horizontal and vertical cuts effectively reduces the diameter of the obtained parts. 

In the context of the recursive procedure RDP of figure \ref{recur}, we  define the decomposition in subregions ``Divide $P$'' as $P_1=RT(P)$, $P_2=LT(P)$, $P_3=RB(P)$ and $P_4=LB(P)$. That is, we first cut horizontally, and then $T(P)$ and $B(P)$ are cut vertically. By lemma \ref{decrease} for $m=1$, the obtained regions will satisfy the requirement that $\dr(P_i)<\dr(P)$. Besides, the lemma \ref{decrease} for $m>1$ also describes the size of regions of later recursive calls.

\subsection{Divide into Smaller Pieces: second try}
\label{managingerror}

The decomposition method described above produces subregions decreasing in rectangular diameter. However, if we apply IPSR to these subregions, it can
return on error without finishing the computation, as theorem \ref{teorIPSR} established. The following is an example of this situation. 

Let us consider a circumference $\G$ centered slightly at the right of the origin, that contains the roots of the polynomial $f(z)=z^3-1$. Its image $\D=f(\G)$ encircles three times the origin (see figure \ref{subreg}). The origin is marked with $\circ$, and it is the image of the three roots. We can suppose for the sake of the example that the IPSR, with a reasonable parameter $Q$, computes the winding number of $\D$, that is 3. 
Following RDP in figure \ref{recur}, we decompose the interior of $\G$ in four circular sectors, by an horizontal and a vertical cut. One of these subregions has a border crossing over a root, like the depicted in figure \ref{subreg} {\em c}). The image of the border of this subregion, say $f(\G_s)$, crosses over the origin (figure \ref{subreg} {\em d}), and hence the IPSR can not compute its winding number.

% triquiñuela con la figura para sacar 4
\begin{figure}[h!]
  \centering
  \begin{tabular}{cc}
\includegraphics[width=.50\textwidth,trim = 0mm .54\textwidth mm .65\textwidth mm 0mm,clip=true]{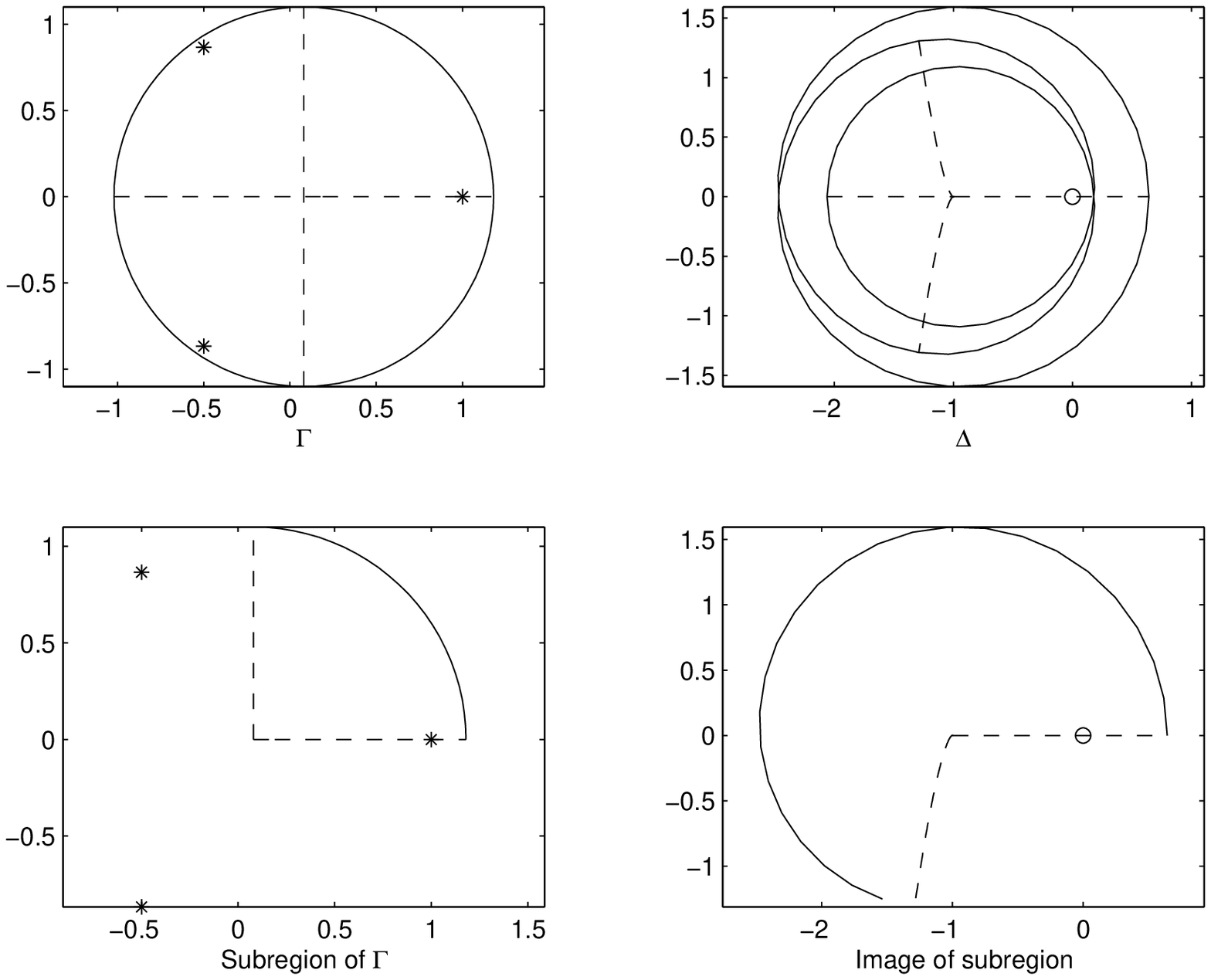} & 
\includegraphics[width=.50\textwidth,trim = .65\textwidth mm .54\textwidth mm 0mm 0mm,clip=true]{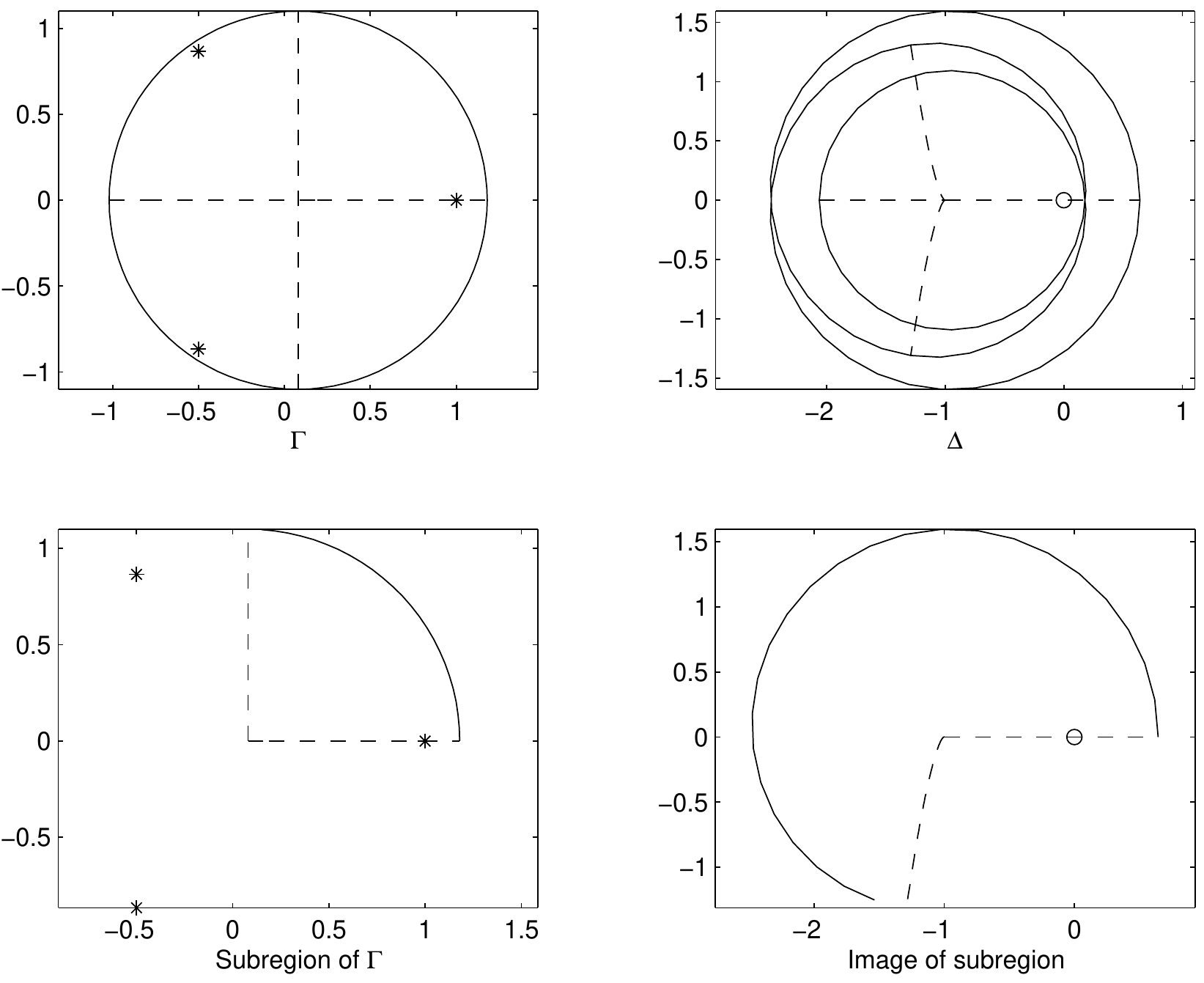}\\
{\em a}) & {\em b}) \\
\includegraphics[width=.50\textwidth,trim = 0mm .01\textwidth mm .65\textwidth mm .47\textwidth mm,clip=true]{images/subreg} & 
\includegraphics[width=.50\textwidth,trim = .65\textwidth mm .01\textwidth mm 0mm .47\textwidth mm,clip=true]{images/subreg}\\
{\em c}) & {\em d})
  \end{tabular}
  \parbox{.9\textwidth}{\caption{Region with the roots of $z^3-1$. The border of the subregion, $\G_s$ crosses over a root.}\label{subreg}}
\end{figure}

The setting of figure \ref{subreg} {\em c}), with a root exactly in the border $\G_s$, can be unlikely, but a an exit with error can also arise can arise if the subregion border has a condition number $\kappa_f(\G_s)$ high enough, by theorem \ref{teorIPSR} {\em c}). This implies, by proposition \ref{distkappa}, that there is a root near $\G_s$.  In short, if the IPSR applied to $f(\G_s)$ with parameter $Q$ returns with error, there is a root at less that $\frac{nQ}{\sin(\pi/8)}$ from $\G_s$, and it does not perform the winding number computation. 

Therefore, the division into four subregions proposed previously is incorrect. To divide in a way that no error is returned by IPSR, we proceed as follows.  Suppose that we start with $\RDP(P,A)$, from an initial region $P$, that does not produce error with IPSR. 
% and reach, in the recursive calls, $\RDP(O(P),A)$ for some operator $O$. We also suppose that $O(P)$, and the subregions that appear before, did not cause an error exit of $\IPSR$. 
To divide $P$ in subregions also without error for $\IPSR$, let us consider first the subregions that arise from $P$ by an horizontal cut following $m_H(P)$, that is $T(P)$ and $B(P)$. If one of these regions produces an error in IPSR, then the condition number of its border is greater than $\frac{\sin(\pi/8)}{Q}$ by theorem \ref{teorIPSR} {\em c} and there is a point in the border at less than $\frac{nQ}{\sin(\pi/8)}$ from a root by proposition \ref{distkappa}. This point belongs to $m_H(P)$, because the rest of the border of $T(P)$ or $B(P)$ is a border of $P$ too, and this region did not produce an error.

 In such case, with $m_H(P)$ near a root, say $z_1$, we do not divide $P$ using $m_H(P)$, but another line instead that does not give rise to an error exit of IPSR. It is sure that the horizontal line that is at $\frac{2nQ}{\sin(\pi/8)}$ above $m_H(P)$ is at distance greater than $\frac{nQ}{\sin(\pi/8)}$ from $z_1$. Perhaps this second line causes again an error in the IPSR, because there are another root $z_2$ near it. In such case the division is performed following a third horizontal line below $m_H(P)$, at distance $\frac{2nQ}{\sin(\pi/8)}$ (figure \ref{trial}). 

\begin{figure}[h!]
  \centering
  \includegraphics[width=.50\textwidth,trim = 0mm 0mm 0mm 0mm,clip=true]{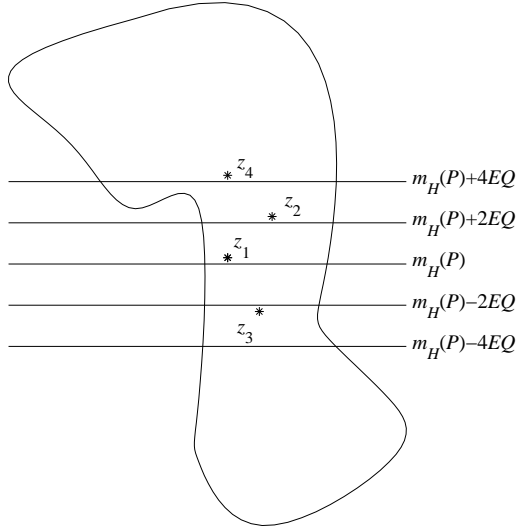} 
  \parbox{.9\textwidth}{\caption{Being $E=\frac{n}{\sin(\pi/8)}$, the lines $m_H(P)$, $m_H(P)+2EQ$, $m_H(P)-2EQ$, $m_H(P)+4EQ$ successively cause error by proximity of the roots $z_1$, $z_2$, $z_3$ and $z_4$ respectively. Finally $m_H(P)-4EQ$ does not have a root at distance lesser than $\frac{nQ}{\sin(\pi/8)}$.}\label{trial}}
\end{figure}

This process, moving the horizontal line a distance of $\frac{2nQ}{\sin(\pi/8)}$ to avoid the roots, will be performed as much $n_0$ times, being $n_0$ the number of roots inside $P$. The half of the $n_0$ or less trial lines are above $m_H(P)$ and the other half below it. As they are $\frac{2nQ}{\sin(\pi/8)}$ apart, the final horizontal cutting line is at less than $n_0\frac{nQ}{\sin(\pi/8)}$ from $m_H(P)$, and does not give place to an IPSR error. A similar reasoning is valid for vertical cutting lines. The division with shifted lines of $P$ is done first with an horizontal cut without error, an then each of the two obtained parts is cut vertically without error. 

We have described the division of the initial region $P$, supposing that IPSR(P,Q) does not return with error. This means that there are no roots near the border of the region of interest, which is a reasonable assumption for $P$. In any case, IPSR must be applied to $P$  before RPD to ensure that it does not fail.

This method of division with shifted lines is recursively applied to the subregions. In the initial region, the assertion that it does not produce IPSR error must be ensured previously, as commented, but for the subregions the equivalent assertion is obtained as a consequence of the cuts shifting.

Remember that the lemma \ref{decrease} gives us a  reduction in diameter for divisions along $m_H(P)$ and $m_V(P)$. The division by the other shifted lines  also produces a decreasing in the diameter, excluding an additive term. We develop a general result about diameters of iterated shifted cuts (in lemma \ref{decreasemu}). Applying this lemma to our way of avoiding the roots (figure \ref{trial}) we can give a value of $Q$ (in proposition \ref{Qbound}) such that the subdivisions are decreasing and besides the calls to $\IPSR$ in this subregions do not return with error.

We define the operator $T_\la$ in the following way. The horizontal line at distance $\la$ above $m_H(P)$  is denoted $m_{H+\la}(P)$. $T_\la(P)$ is the intersection of $P$ with the upper half-plane defined by $m_{H+\la}(P)$. If $\la$ is a negative value, the line $m_{H+\la}(P)$ should be understood below $m_H(P)$. Analogously the operator $B_\la(P)$ is the intersection of $P$ with the lower half-plane defined by the same horizontal line, $m_{H+\la}(P)$. In this way $T_\la(P)\cup B_\la(P)=P$ and $T_\la(P)\cap B_\la(P)$ is its common border, a segment of $m_{H+\la}(P)$. Likewise $R_\la(P)$ is the intersection of $P$ with the right half-plane defined by $m_{V+\la}(P)$ (the vertical line at distance $\la$ at right of $m_V(P)$), and $L_\la(P)$ the intersection with the left half-plane defined by the same line $m_{V+\la}(P)$.  See figure \ref{lambdaops}.

\begin{figure}[h!]
  \centering
  \includegraphics[width=.50\textwidth,trim = 0mm -9.8mm 0mm 0mm,clip=true]{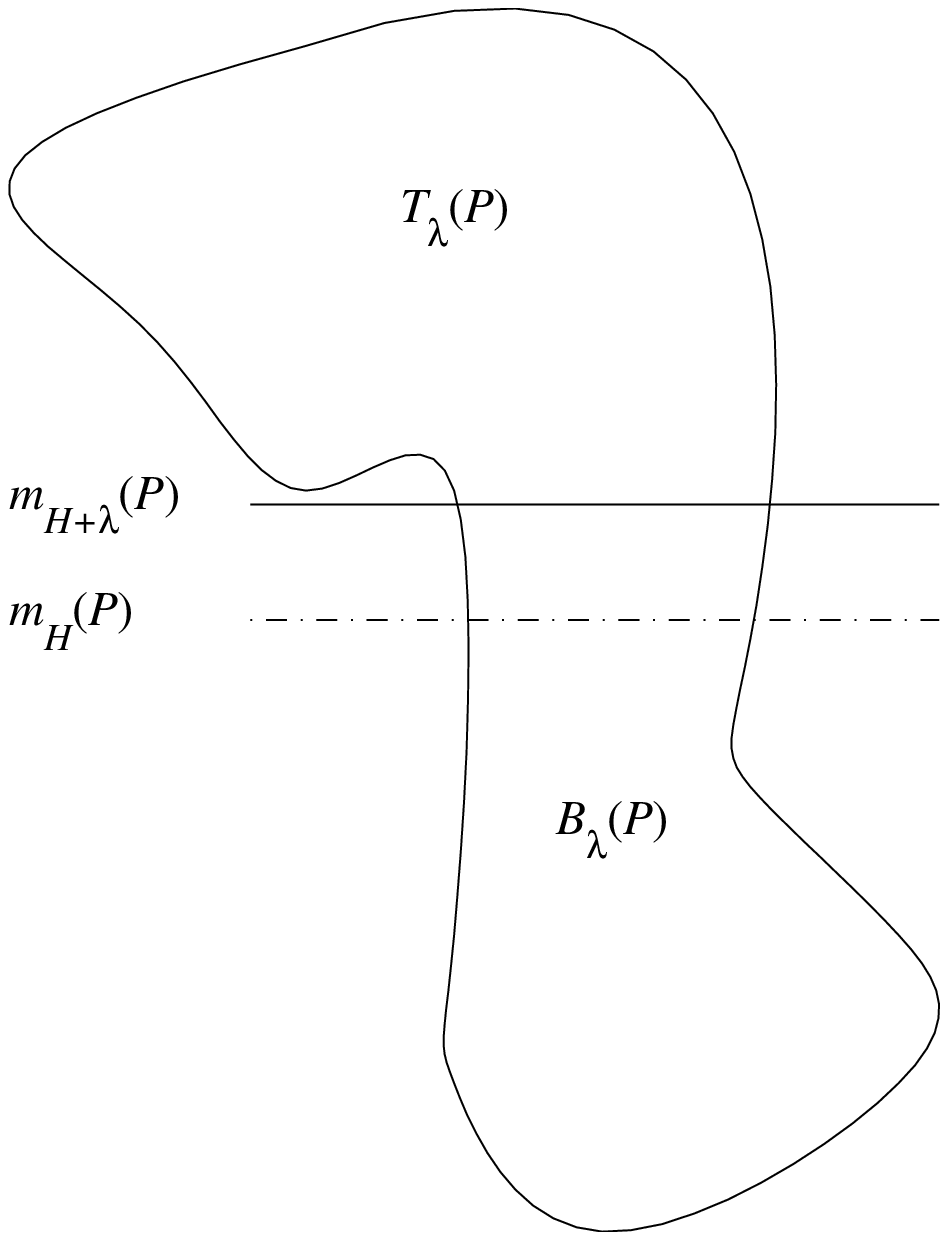}\includegraphics[width=.50\textwidth,trim = 0mm 0mm 0mm 0mm,clip=true]{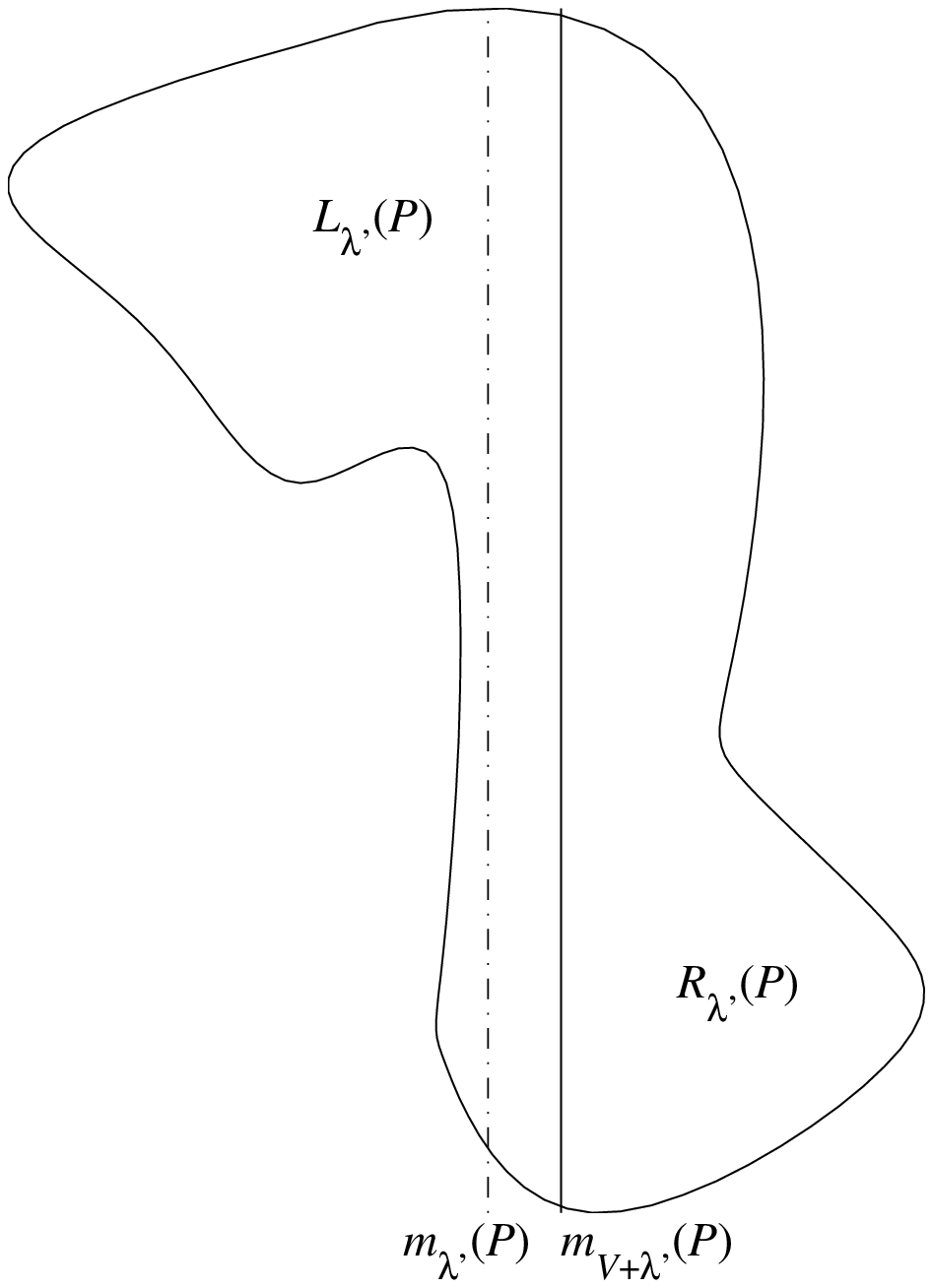} 
  \parbox{.9\textwidth}{\caption{The action of the operators $T_{\lambda}$, $B_\lambda$, $L_{\lambda'}$ and $R_{\la'}$.}\label{lambdaops}}
\end{figure}

\begin{lemman} \label{conten}  If $\la \geq 0$, then $B(P) \subset B_\la(P)$ and  $R(P) \subset R_\la(P)$. If $\la \leq 0$, then  $B(P) \supset B_\la(P)$ and  $R(P) \supset R_\la(P)$. For the operators $T_\la$ and $L_\la$, in the same hypothesis the inclusions are reversed.

\end{lemman}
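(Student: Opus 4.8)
The plan is to reduce all four inclusions to a single elementary fact about nested half-planes, then intersect with $P$. Write $H^{-}(\ell)$ and $H^{+}(\ell)$ for the closed lower and upper half-planes bounded by a horizontal line $\ell$, and $H^{L}(\ell)$, $H^{R}(\ell)$ for the closed left and right half-planes bounded by a vertical line $\ell$. The only geometric input I need is the evident monotonicity: if a horizontal line $\ell'$ lies weakly above $\ell$ then $H^{-}(\ell)\subseteq H^{-}(\ell')$ and $H^{+}(\ell)\supseteq H^{+}(\ell')$, and the mirror statement for vertical lines and $H^{L}$, $H^{R}$. This is immediate, since each half-plane is a sublevel or superlevel set of one coordinate function. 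Combined with the trivial observation that $A\subseteq B$ implies $P\cap A\subseteq P\cap B$, this gives every claim in one line each.

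First I would unwind the definitions: $B(P)=P\cap H^{-}(m_H(P))$ and $B_\la(P)=P\cap H^{-}(m_{H+\la}(P))$, where $m_{H+\la}(P)$ is $m_H(P)$ translated a signed distance $\la$ upward. For $\la\ge 0$ the line $m_{H+\la}(P)$ sits at or above $m_H(P)$, so $H^{-}(m_H(P))\subseteq H^{-}(m_{H+\la}(P))$, and intersecting with $P$ gives $B(P)\subseteq B_\la(P)$; for $\la\le 0$ the two half-planes are nested the other way, so $B(P)\supseteq B_\la(P)$. Since $T_\la$ uses the complementary (upper) half-plane of the same line --- $T(P)=P\cap H^{+}(m_H(P))$, $T_\la(P)=P\cap H^{+}(m_{H+\la}(P))$ --- and $H^{+}$ is monotone in the opposite sense, the inclusions for $T$ versus $T_\la$ are exactly the reverses of those for $B$ versus $B_\la$, which is the \emph{reversed} clause of the statement.

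The vertical operators are handled verbatim after swapping ``horizontal / above / below'' for ``vertical / right / left'' and $H^{\pm}$ for $H^{R}$, $H^{L}$: from $R(P)=P\cap H^{R}(m_V(P))$ and $R_\la(P)=P\cap H^{R}(m_{V+\la}(P))$ one reads off $R(P)\subseteq R_\la(P)$ for $\la\ge 0$ and the reverse for $\la\le 0$, and $L_\la$, using the complementary half-plane of the same vertical line, flips these. No convexity or connectedness of $P$ enters. The only point that needs care --- and the sole place an error could creep in --- is the orientation bookkeeping: pinning down, from the definitions of $m_{H+\la}(P)$ and $m_{V+\la}(P)$, which of $H^{+}$, $H^{-}$ (respectively $H^{R}$, $H^{L}$) grows as $\la$ increases, so that the four inclusions are attached to the correct sign of $\la$; once that convention is fixed the argument is purely mechanical.
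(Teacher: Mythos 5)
Your approach is exactly the paper's: the published proof is the one line ``it is immediate considering the position of $m_{H+\la}(P)$ and $m_H(P)$,'' and your half-plane monotonicity plus intersection with $P$ is just that argument written out. For the horizontal operators your derivation is complete and correct: for $\la\ge 0$ the line $m_{H+\la}(P)$ sits above $m_H(P)$, the lower half-plane grows, hence $B(P)\subseteq B_\la(P)$, and $T_\la$, which cuts with the complementary half-plane of the same line, gets the reversed inclusion.

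For the vertical operators, however, the inclusion you ``read off'' does not follow from your own monotonicity principle. The paper defines $m_{V+\la}(P)$ as the vertical line at distance $\la$ to the \emph{right} of $m_V(P)$; by your mirror statement, moving the bounding line to the right makes the right half-plane \emph{shrink}, so the definitions yield $R_\la(P)\subseteq R(P)$ for $\la\ge 0$ --- the opposite of the lemma's claim $R(P)\subseteq R_\la(P)$ --- and likewise $L(P)\subseteq L_\la(P)$ rather than its reverse. You explicitly identify the orientation bookkeeping as ``the sole place an error could creep in'' and then decline to carry it out; doing so shows that the operator analogous to $B$ (the one that grows as the line moves in the positive $\la$ direction) is $L$, not $R$, so the lemma as printed pairs the operators inconsistently with the definition of $m_{V+\la}(P)$. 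This is in all likelihood a sign slip in the paper rather than in your geometry, and it is harmless downstream because lemma \ref{halfmu} and lemma \ref{decreasemu} depend only on $|\la|$; but asserting the printed vertical inclusions when your own premises give their reverses is a gap --- you should either flip the sign convention for the vertical shift (take $m_{V+\la}$ to the left for $\la\ge 0$), swap $L$ and $R$ in the conclusion, or note the discrepancy explicitly.
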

\begin{proof} It is immediate considering the position of $m_{H+\la}(P)$ and $m_H(P)$ (or of $m_{V+\la}(P)$ and $m_V$(P)).

\qedhere
\end{proof}

We say that the operators $T_\la$ and $B_\la$ are of type {\em horizontal}  and $L_\la$ and $R_\la$ of type {\em vertical}. Let us denote with $H_\la$ a generic horizontal operator, that is, either $T_\la$ or $B_\la$. Similarly $V_\la$ denotes 
either $L_\la$ or $R_\la$. Note that for $\la$ large enough, the result of an operator can be the empty set, whose diameter is conventionally zero.

\begin{lemman} \label{halfmu} For an horizontal operator $H_\la$, $\dm_H(H_\la(P))\leq \dm_H(P)$. If besides $|\la|\leq \dfrac{\dm_V(P)}{2}$ then
$\dm_V(H_\la(P))\leq \dfrac{\dm_V(P)}{2} + |\la|$

For an vertical operator $V_\la$, $\dm_V(V_\la(P))\leq \dm_V(P)$. If besides $|\la|\leq \dfrac{\dm_H(P)}{2}$ then 
$\dm_H(V_\la(P))\leq \dfrac{\dm_H(P)}{2} + |\la|$. 
\end{lemman}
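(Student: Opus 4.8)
The plan is to reduce the lemma to two facts already at hand: the monotonicity of the diameters under inclusion (Lemma~\ref{contain}), and the elementary observation that a set contained in the closed strip between two parallel horizontal lines has vertical diameter at most the width of that strip. This is exactly the mechanism behind Lemma~\ref{half}, with the central line $m_H(P)$ replaced by the shifted line $m_{H+\la}(P)$ (and symmetrically for the vertical case).

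For the unconditional inequalities, I would note that each of $T_\la$, $B_\la$, $L_\la$, $R_\la$ is by construction the intersection of $P$ with a closed half-plane, so $H_\la(P)\subseteq P$ and $V_\la(P)\subseteq P$; Lemma~\ref{contain} then yields $\dm_H(H_\la(P))\le\dm_H(P)$ and $\dm_V(V_\la(P))\le\dm_V(P)$ at once. For the conditional bound I would put $d=\dm_V(P)$ and recall that $m_H(P)$ lies at distance $d/2$ below the top supporting line $l_T$ of $P$ and at distance $d/2$ above the bottom supporting line $l_B$; the hypothesis $|\la|\le d/2$ then places $m_{H+\la}(P)$ in the closed strip bounded by $l_B$ and $l_T$. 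If $H_\la=T_\la$, then $T_\la(P)$ lies in the strip bounded below by $m_{H+\la}(P)$ and above by $l_T$, whose width is $\di(m_{H+\la}(P),l_T)=d/2-\la\le d/2+|\la|$; since the top and bottom supporting lines of $T_\la(P)$ must themselves lie inside this strip, $\dm_V(T_\la(P))\le d/2+|\la|$. If $H_\la=B_\la$, the symmetric argument places $B_\la(P)$ in the strip between $l_B$ and $m_{H+\la}(P)$, of width $d/2+\la\le d/2+|\la|$, giving $\dm_V(B_\la(P))\le d/2+|\la|$.

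The two statements for a vertical operator $V_\la$ follow by the same argument after interchanging the horizontal and vertical directions, that is, swapping $\dm_H$ with $\dm_V$, the lines $l_T,l_B$ with $l_L,l_R$, and $m_{H+\la}$ with $m_{V+\la}$. I do not expect any genuine difficulty here: the only points needing care are to verify that the relevant supporting lines of $H_\la(P)$ really do fall inside the strip (just as in Lemma~\ref{half}) and to keep straight which of $l_T$, $l_B$ and $m_{H+\la}$ bounds $H_\la(P)$ from which side in the $T_\la$ and $B_\la$ cases. The sign of $\la$ is harmless, since $-\la\le|\la|$ and $\la\le|\la|$ always; and should $m_{H+\la}(P)$ leave $P$ the region degenerates to $\emptyset$ or to $P$ and the bound holds trivially, so invoking $|\la|\le d/2$ costs nothing.
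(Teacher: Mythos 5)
Your proof is correct and follows essentially the same route as the paper's: the unconditional inequalities come from $H_\la(P)\subseteq P$ together with Lemma~\ref{contain}, and the conditional bound comes from locating the shifted cutting line $m_{H+\la}(P)$ at distance $|\la|$ from $m_H(P)$ and bounding the width of the resulting strip containing $H_\la(P)$. Your version merely spells out the strip widths ($d/2-\la$ for $T_\la$, $d/2+\la$ for $B_\la$) and the degenerate cases more explicitly than the paper does.
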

\begin{proof} The first inequality is a consequence of lemma \ref{contain}. 
The condition $|\la|\leq \dfrac{\dm_V(P)}{2}$ makes that the defining line of $H_\la(P)$ will be placed at less than $|\la|$ from $m_H(P)$, above if either $\la\geq 0$ and $H_\la=T_\la$, or $\la\leq 0$ and $H_\la=B_\la$, and below in other cases. Anyway the second inequality is verified. Similarly the claim about vertical operators.

\qedhere
\end{proof}

 An operator $O$ is {\em of type ${\cal O}_{m,\mu}$} for $m\geq 1$ and $\mu\geq 0$ if it is the composition of $m$ operators $H_{\la_i}$ and $m$ $V_{\la'_i}$, $ i= 1,\dots ,m$, alternating, with $|\la_i|\leq\mu,|\la'_i|\leq\mu$, starting with an horizontal operator. That is   $O=V_{\la'_m}H_{\la_m}V_{\la'_{m-1}}H_{\la_{m-1}}  \cdots V_{\la'_1}H_{\la_1}$. The next lemma shows that the diameters of the regions $O(P)$ are lesser than that of $P$, except for an additive term.

\begin{lemman} \label{decreasemu} For $m\geq 1$, $\mu\geq 0$, if $O_m$ is an operator of type ${\cal O}_{m,\mu}$ then
$$
\dr(O_m(P))\leq \dfrac{\dr(P)}{2^m}+\dfrac{2^m-1}{2^{m-1}}\mu\sqrt{2}
$$
\end{lemman}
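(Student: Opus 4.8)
The plan is to prove this by induction on $m$, closely following the proof of Lemma~\ref{decrease}; the only new ingredient is the bookkeeping of the additive error coming from Lemma~\ref{halfmu}. The engine of the argument is a single one-level estimate: for any plane region $Q$ and any composition $O=V_{\la'}H_{\la}$ of a horizontal shifted cut followed by a vertical one with $|\la|,|\la'|\le\mu$, one has $\dr(O(Q))\le\dr(Q)/2+\mu\sqrt2$. To get this I first apply Lemma~\ref{halfmu} to the horizontal cut: $\dm_H(H_\la(Q))\le\dm_H(Q)$ and $\dm_V(H_\la(Q))\le\dm_V(Q)/2+|\la|$. Then I apply the vertical part of Lemma~\ref{halfmu} to the region $H_\la(Q)$, obtaining $\dm_H(O(Q))\le\dm_H(H_\la(Q))/2+|\la'|\le\dm_H(Q)/2+|\la'|$ and $\dm_V(O(Q))\le\dm_V(H_\la(Q))\le\dm_V(Q)/2+|\la|$.

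Now I bound $\dr$ by splitting it with the triangle inequality for the Euclidean norm on $\R^2$, applied to the sum of the vectors $(\dm_H(Q)/2,\dm_V(Q)/2)$ and $(|\la'|,|\la|)$:
$$\dr(O(Q))\le\sqrt{\Bigl(\tfrac{\dm_H(Q)}{2}+|\la'|\Bigr)^2+\Bigl(\tfrac{\dm_V(Q)}{2}+|\la|\Bigr)^2}\le\frac{\dr(Q)}{2}+\sqrt{|\la'|^2+|\la|^2}\le\frac{\dr(Q)}{2}+\mu\sqrt2.$$
The base case $m=1$ is exactly this estimate with $Q=P$, and the claimed constant $\frac{2^1-1}{2^0}\mu\sqrt2=\mu\sqrt2$ agrees. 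For the inductive step I write $O_m=V_{\la'_m}H_{\la_m}O_{m-1}$ with $O_{m-1}$ of type ${\cal O}_{m-1,\mu}$, set $Q=O_{m-1}(P)$, apply the one-level estimate to get $\dr(O_m(P))\le\dr(Q)/2+\mu\sqrt2$, and substitute the induction hypothesis $\dr(Q)\le\dr(P)/2^{m-1}+\frac{2^{m-1}-1}{2^{m-2}}\mu\sqrt2$. Collecting the $\mu\sqrt2$ terms via $\frac12\cdot\frac{2^{m-1}-1}{2^{m-2}}+1=\frac{2^{m-1}-1}{2^{m-1}}+1=\frac{2^m-1}{2^{m-1}}$ yields the stated bound.

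The one point that needs a little care — rather than a genuine obstacle — is that Lemma~\ref{halfmu} carries the side hypothesis $|\la|\le\dm_V(\cdot)/2$ (resp.\ $|\la'|\le\dm_H(\cdot)/2$), which is not guaranteed by $|\la|\le\mu$ alone. When it fails, the cutting line lies outside the region, so $H_\la$ (or $V_\la$) returns either the empty set or the whole region; in either case the relevant operand is a subset of the region being cut, so by Lemma~\ref{contain} the ``halved'' bound holds trivially, e.g.\ $\dm_V(H_\la(Q))\le\dm_V(Q)\le\dm_V(Q)/2+|\la|$, and the same chain of inequalities goes through unchanged. Handling this edge case with one sentence, the rest is routine arithmetic with the geometric factors $2^{-m}$.
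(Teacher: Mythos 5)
Your proof is correct and follows essentially the same route as the paper: induction on $m$, with the one-level estimate $\dr(V_{\la'}H_{\la}(Q))\leq \dr(Q)/2+\mu\sqrt{2}$ obtained from Lemma~\ref{halfmu} and the triangle inequality for the Euclidean norm, then the same collection of the $\mu\sqrt{2}$ terms. Your explicit treatment of the side hypothesis $|\la|\leq \dm_V(\cdot)/2$ of Lemma~\ref{halfmu} (via Lemma~\ref{contain} when the cutting line misses the region) is a small point the paper's own proof passes over silently, and it is handled correctly.
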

\begin{proof}
First, note that $\dr(P)=\|(\di_H(P),\di_V(P))\|$, being $\|\,\|$ the Euclidean plane norm. Also, by the triangle inequality $\|(a+\mu,b+\mu)\|\leq \|(a,b)\| + \|(\mu,\mu)\|= \|(a,b)\| + \mu\sqrt{2}$.

We prove the lemma by induction on $m$. For $m=1$, we have that  $O=V_{\la'_1}H_{\la_1}$. Hence, applying some inequalities of lemmas \ref{conten} and \ref{halfmu}: 
\begin{multline*}
\dr(V_{\la'_1}H_{\la_1}(P))= \sqrt{\dm_H(V_{\la'_1}H_{\la_1}(P))^2 +\dm_V(V_{\la'_1}H_{\la_1}(P))^2} \leq \\ 
\leq \sqrt{\left(\dfrac{\dm_H(H_{\la_1}(P))}{2}+|\la'_1|\right)^2
 + \dm_V(H_{\la_1}(P))^2}
  \leq \\
 \leq \sqrt{\left(\dfrac{\dm_H(P)}{2}+|\la'_1|\right)^2 +\left(\dfrac{\dm_V(P)}{2}+|\la_1|\right)^2} \leq \\
\leq  \sqrt{\left(\dfrac{\dm_H(P)}{2}+\mu\right)^2 +\left(\dfrac{\dm_V(P)}{2}+\mu\right)^2} =
\\
=\left\|\left(\dfrac{\dm_H(P)}{2}+\mu, \dfrac{\dm_V(P)}{2}+ \mu\right)\right\| \leq \dfrac{\dr(P)}{2}+ \mu\sqrt{2}
\end{multline*}

For $m>1$, if $O_m$ is of type ${\cal O}_{m,\mu}$, then   $O=V_{\la'_m}H_{\la_m}O_{m-1}$ with  $O_{m-1}$ of type ${\cal O}_{m-1, \mu}$. In the first case (the second are similar), applying again the lemmas and the hypothesis of induction, we have:
\begin{equation*}
\dr(O_m(P))=\dr(V_{\la'_m}H_{\la_m}O_{m-1}(P)) \leq \dfrac{\dr(O_{m-1}(P))}{2}+ \mu\sqrt{2}\\
\end{equation*}

By the hypothesis of induction, $\dr(O_{m-1}(P))\leq \dfrac{\dr(P)}{2^{m-1}}+\dfrac{2^{m-1}-1}{2^{m-2}}\mu\sqrt{2}$, hence
\begin{multline*}
\dfrac{\dr(O_{m-1}(P))}{2}+ \mu\sqrt{2}\leq \dfrac{\dfrac{\dr(P)}{2^{m-1}}+\dfrac{2^{m-1}-1}{2^{m-2}}\mu\sqrt{2}}{2}+ \mu\sqrt{2} =\\
=\dfrac{\dr(P)}{2^{m}}+\dfrac{2^{m-1}-1}{2^{m-1}}\mu\sqrt{2} + \mu\sqrt{2}
=\dfrac{\dr(P)}{2^{m}}+\dfrac{2^{m}-1}{2^{m-1}}\mu\sqrt{2}
\end{multline*}

\qedhere
\end{proof}

Note that $\dfrac{2^{m}-1}{2^{m-1}}=\left(2-\frac{1}{2^{m-1}}\right)<2$, hence $\dr(O_m(P))< \dfrac{\dr(P)}{2^m}+2\sqrt{2}\mu$. 

For any values $\la,\la'_1$ and $\la'_2$ lesser or equal than $\mu$, we say that the   subregions $P_1=R_{\la'_1}T_{\la}(P)$, $P_2=L_{\la'_1}T_{\la}(P)$, $P_3=R_{\la'_2}B_{\la}(P)$ and $P_4=L_{\la'_2}B_{\la}(P)$ are a $\mu$-decomposition of $P$. That is, we first cut horizontally with a $\la$ shift, and then $T_{\la}(P)$ and $B_{\la}(P)$ are cut vertically with respective shifts $\la'_1$ and $\la'_2$. The separation of the cut lines $m_{H+\la}(P),m_{V+\la}(P)$ from the center lines $m_H(P),m_V(P)$ has a maximum value of $\mu$.

The last term of the inequality of lemma \ref{decreasemu}, $2\sqrt{2}\mu$,  obstructs the decreasing of the rectangular diameters of the regions that arise after successive $\mu$-decompositions. After any number of $\mu$-decompositions, the rectangular diameter can be greater than this last term.

We apply this theory of $\mu$-decompositions to our approach to avoiding the roots (figure \ref{trial}),  and show how to get a decreasing in rectangular diameter up the desired precision even considering the obstruction of lemma \ref{decreasemu}. Remember that we want to reach a subregion of diameter lesser than $A$, with cuts shifted a maximum separation of $\mu\leq \frac{n_0nQ}{\sin(\pi/8)}$, being $n_0$ the number of roots inside the original region $P$. We have that:

\begin{propositionn} \label{Qbound} If $Q\leq\frac{A\sin(\pi/8)}{4\sqrt{2}n_0n}$, there is a number $m$ such that any operator $O_m$  of type ${\cal O}_{m,\mu}$ verifies $\dr(O_m(P))\leq A$.
\end{propositionn}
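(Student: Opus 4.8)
The plan is to reduce the statement to a direct consequence of Lemma~\ref{decreasemu}. First I would recall the remark made immediately after that lemma: for any $m\geq 1$, any $\mu\geq 0$ and any operator $O_m$ of type ${\cal O}_{m,\mu}$,
$$\dr(O_m(P))<\dfrac{\dr(P)}{2^m}+2\sqrt{2}\,\mu,$$
because $\frac{2^m-1}{2^{m-1}}=2-\frac{1}{2^{m-1}}<2$. Then I would identify the relevant value of $\mu$: in the root–avoiding partition of figure~\ref{trial}, each cutting line is obtained from the central line $m_H(P)$ (resp.\ $m_V(P)$) by displacing it in steps of at most $\frac{2nQ}{\sin(\pi/8)}$, performed at most $n_0$ times (split between above and below, where $n_0$ is the number of roots in the original region, which also bounds the root count of every subregion encountered in the recursion). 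Hence every cut lies within $\mu=\frac{n_0 n Q}{\sin(\pi/8)}$ of the corresponding central line, the decompositions produced by RDP are $\mu$-decompositions, and the operators obtained by iterating them are exactly of type ${\cal O}_{m,\mu}$ with this $\mu$.

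Second, I would feed the hypothesis on $Q$ into the additive term. From $Q\leq\frac{A\sin(\pi/8)}{4\sqrt{2}\,n_0 n}$ one gets
$$\mu=\frac{n_0 n Q}{\sin(\pi/8)}\leq\frac{n_0 n}{\sin(\pi/8)}\cdot\frac{A\sin(\pi/8)}{4\sqrt{2}\,n_0 n}=\frac{A}{4\sqrt{2}},$$
so that $2\sqrt{2}\,\mu\leq\frac{A}{2}$, and the bound above improves to $\dr(O_m(P))<\frac{\dr(P)}{2^m}+\frac{A}{2}$, uniformly over all operators $O_m$ of type ${\cal O}_{m,\mu}$ (the right-hand side no longer depends on the particular choice of the $H_{\la_i}$, $V_{\la'_i}$).

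Third, since $\dr(P)$ and $A$ are fixed positive reals and $\dr(P)/2^m\to 0$ as $m\to\infty$, I would simply take an integer $m\geq 1$ large enough that $\dr(P)/2^m\leq A/2$, for instance $m=\max\{1,\lceil\log_2(2\dr(P)/A)\rceil\}$. For this $m$, every operator $O_m$ of type ${\cal O}_{m,\mu}$ satisfies $\dr(O_m(P))<\frac{A}{2}+\frac{A}{2}=A$, which is the assertion of the proposition.

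So the mathematical core is a one-line substitution into Lemma~\ref{decreasemu}. I expect the only point requiring genuine care to be the bookkeeping that links the ad hoc line-shifting of figure~\ref{trial} to the formal notion of $\mu$-decomposition: one must check that the shift bound $\mu=\frac{n_0 n Q}{\sin(\pi/8)}$ is respected at \emph{every} level of the recursion (not merely at the first cut), and that the ``cut horizontally, then cut each half vertically with possibly different shifts'' rule used by RDP matches the alternating horizontal/vertical structure of the operators in ${\cal O}_{m,\mu}$. Once this correspondence is spelled out, Lemma~\ref{decreasemu} together with the hypothesis on $Q$ gives the conclusion at once.
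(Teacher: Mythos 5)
Your proof is correct and follows essentially the same route as the paper: apply Lemma~\ref{decreasemu}, use the hypothesis on $Q$ together with $\mu\leq\frac{n_0nQ}{\sin(\pi/8)}$ to bound the additive term by $\frac{A}{2}$, and choose $m$ large enough that $\frac{\dr(P)}{2^m}\leq\frac{A}{2}$. Your additional remarks on the correspondence between the line-shifting of figure~\ref{trial} and the formal $\mu$-decomposition are a welcome clarification but do not change the argument.
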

\begin{proof} If we apply to $P$ an operator $O_m$  of type ${\cal O}_{m,\mu}$, by lemma \ref{decreasemu} $$\dr(O_m(P))\leq \dfrac{\dr(P)}{2^m}+2\mu\sqrt{2}
$$

First, note that for some $m$, $\frac{\dr(P)}{2^m}\leq\frac{A}{2}$. Second,  with $Q\leq \frac{A\sin(\pi/8)}{4\sqrt{2}n_0n}$, as $\mu\leq \frac{n_0nQ}{\sin(\pi/8)}$, we have $2\sqrt{2}\mu\leq 2\sqrt{2}\frac{n_0n\frac{A\sin(\pi/8)}{4\sqrt{2}n_0n}}{\sin(\pi/8)}=\frac{A}{2}$. Adding these bounds:
$$\dfrac{\dr(P)}{2^m}+2\mu\sqrt{2} \leq \frac{A}{2}+\frac{A}{2}=A$$
as we wanted to show

%To obtain $\dr(O_m(P))\leq A$, as desired, it is sufficient that  $\frac{\dr(P)}{2^m}\leq\frac{A}{2}$ and $2\sqrt{2}\mu\leq\frac{A}{2}$. The first inequality is true for $m$ high enough. For the second, with $Q\leq \frac{A\sin(\pi/8)}{4\sqrt{2}n_0n}$, as $\mu\leq \frac{n_0nQ}{\sin(\pi/8)}$, we have $2\sqrt{2}\mu\leq 2\sqrt{2}\frac{n_0n\frac{A\sin(\pi/8)}{4\sqrt{2}n_0n}}{\sin(\pi/8)}=\frac{A}{2}$.

\qedhere
\end{proof}
 
\subsection{Termination and cost of the recursive procedure}
\label{termination}

The following theorem assures that the recursive procedure RPD of figure \ref{recur} ends and that its output verifies the claimed property. To make reference to the nested calls of RPD, we say that the first call has {\em recursion level $0$}, and the calls to the same function made inside a call of recursion level $v$ has {\em recursion level $v+1$}.

\begin{Theorem}\label{refinetheor} If $P$ is a plane region containing $n_0$ roots of the polynomial $f,$ with $\dr(P)>A$ and such that IPSN$(P,Q)$ does not produce error, the procedure RPD applied to it with an accuracy of $A$ verifies that:
\begin{itemize}
\item[a)] It ends, reaching a level of recursion lesser or equal than $\ld\left(\frac{\dr(P)}{A}\right)+2$. 
\item[b)] At the end, the plane regions of array $R=(P_1,P_2,\dots,P_k)$ are approximations of all the roots of $f$ in $P,$ each containing the number of roots  given by $N=(n_1,n_2,\dots,n_k)$.
\end{itemize} 
\end{Theorem}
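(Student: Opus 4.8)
\textbf{Proof plan for Theorem \ref{refinetheor}.}

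The plan is to prove both parts by analyzing the recursion tree of RDP. For part (a), I would argue by bounding the recursion depth. At recursion level $0$ we start with $P$; each recursive call descends one level and, by the construction of subsection \ref{managingerror}, replaces the current region $P'$ by one of four subregions obtained as a $\mu$-decomposition with $\mu\leq\frac{n_0 n Q}{\sin(\pi/8)}$. Crucially, two consecutive recursion levels correspond (in the terminology of subsection \ref{splitting}) to the application of one horizontal and one vertical shifted operator, so descending $2m$ levels applies an operator of type ${\cal O}_{m,\mu}$ to $P$. By Proposition \ref{Qbound} (whose hypothesis $Q\leq\frac{A\sin(\pi/8)}{4\sqrt2\, n_0 n}$ is exactly the value of $Q$ fixed for RDP), there is an $m$ with $\dr(O_m(P))\leq A$; chasing the explicit estimate $\dr(O_m(P))\leq \frac{\dr(P)}{2^m}+2\sqrt2\,\mu$ from Lemma \ref{decreasemu}, it suffices that $\frac{\dr(P)}{2^m}\leq\frac{A}{2}$, i.e. $m\geq\ld(\dr(P)/A)-1$, so $m=\lceil\ld(\dr(P)/A)\rceil$ works, giving recursion level at most $2m$. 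A slightly sharper bookkeeping — using that only one of the two operators at each pair of levels needs to shrink the relevant diameter — should yield the stated bound $\ld(\dr(P)/A)+2$; I would present the estimate carefully so that the ceiling and the ``$+2$'' come out, and note that once $\dr(P')<A$ the call terminates at exit 2 (or earlier at exit 1 if windN is zero), so no branch descends below that level. Termination of the whole procedure then follows because the recursion tree has bounded depth and branching factor $4$, hence finitely many nodes.

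For part (b), I would argue by structural induction on the recursion tree, proving the invariant: \emph{for every region $P'$ on which RDP is called, the multiset of roots of $f$ lying in $P'$ equals the disjoint union, over the leaves $P_i$ in the subtree rooted at $P'$ that get added to $R$, of ``$n_i$ copies of roots in $P_i$''}, and moreover each such $P_i$ has $\dr(P_i)<A$ and windN$(P_i)=n_i$ equals its true root count. The base cases are exit 1 (windN$(P')=0$, so by Theorem \ref{teorIPSR}(b) — applicable since the construction guarantees IPSR does not error on $P'$ — $P'$ contains no roots, consistent with contributing nothing) and exit 2 ($\dr(P')<A$, $P'$ added with label windN$(P')$, which by Theorem \ref{teorIPSR}(b) is the number of roots in $P'$). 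For the inductive step (exit 3), the key facts are: the $\mu$-decomposition is a genuine set partition $P'=\bigcup_{j=1}^4 P'_j$ with pairwise disjoint interiors (by convexity, preserved under the operators, and by Lemma \ref{conten}-type containments), so every root of $f$ in $P'$ lies in exactly one $P'_j$ — here one must observe that the cut lines are shifted precisely so that no root lies on a boundary, since IPSR does not error on the $P'_j$, hence by Proposition \ref{distkappa} no root is within the borders; then apply the induction hypothesis to each $P'_j$ and sum. Applying the invariant at the root $P$ gives exactly the claimed output property.

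The main obstacle I anticipate is the careful handling of roots that might sit on the shared boundaries of the four subregions: a partition-of-roots argument needs each root to be assigned to a unique $P'_j$, and this is only legitimate because the shifting mechanism of subsection \ref{managingerror} keeps every cutting line at distance $>\frac{nQ}{\sin(\pi/8)}>0$ from every root (otherwise IPSR would have returned on error on that subregion, contradicting the construction). Making this precise — and confirming that the construction's promise ``IPSR does not error on the subregions'' is actually achieved for the chosen $Q$, invoking Proposition \ref{Qbound} and the bound $\mu\leq n_0 n Q/\sin(\pi/8)$ — is the delicate point; the diameter-decay estimate in part (a) is then essentially a direct application of Lemma \ref{decreasemu}, modulo getting the logarithmic constant and the ``$+2$'' exactly right.
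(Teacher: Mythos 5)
Your overall strategy is the same as the paper's: part (a) via Proposition \ref{Qbound} and the decay estimate of Lemma \ref{decreasemu}, part (b) via structural induction on the recursion tree with base cases at exits 1 and 2 and a partition argument at exit 3. Part (b) as you outline it is essentially the paper's proof, and your explicit treatment of why no root can sit on a shared cutting line (the shifted cuts keep every cut line at distance greater than $\frac{nQ}{\sin(\pi/8)}$ from every root, else IPSR would have errored) is in fact more careful than the paper, which only asserts the set-partition requirement.

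There is, however, a concrete error in your accounting for part (a). You assert that two consecutive recursion levels correspond to one horizontal and one vertical shifted operator, so that descending $2m$ levels applies an operator of type ${\cal O}_{m,\mu}$. That misreads the recursion structure: in RDP a \emph{single} call's ``Divide $P$'' already composes one horizontal and one vertical cut --- the four subregions of a $\mu$-decomposition are $R_{\la'_1}T_{\la}(P)$, $L_{\la'_1}T_{\la}(P)$, $R_{\la'_2}B_{\la}(P)$, $L_{\la'_2}B_{\la}(P)$, each of type ${\cal O}_{1,\mu}$. Hence a region at recursion level $v$ is $O_v(P)$ with $O_v\in{\cal O}_{v,\mu}$, and the depth bound follows directly: once $\frac{\dr(P)}{2^v}\leq\frac{A}{2}$, i.e.\ $v\geq \ld\bigl(\frac{\dr(P)}{A}\bigr)+1$, the next call exits at exit 2, giving maximal level $\ld\bigl(\frac{\dr(P)}{A}\bigr)+2$. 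Your ``$2m$ levels'' version yields a bound roughly twice the stated one, and the repair you gesture at (``only one of the two operators at each pair of levels needs to shrink the relevant diameter'') is not the right fix --- the factor of two disappears because both operators are applied within one level, not because one of them can be discounted. As written, your argument does not establish the stated bound for part (a), though the fix is a single observation about how ``Divide $P$'' is defined.
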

\begin{proof} For {\em a}), by proposition \ref{Qbound}, choosing $Q\leq\frac{A\sin(\pi/8)}{4\sqrt{2}n_0n}$ we have that $2\sqrt{2}\mu\leq\frac{A}{2}$ and, by lemma \ref{decreasemu} $\dr(O_m(P))\leq \dfrac{\dr(P)}{2^m}+\frac{A}{2}$ for any operator  $O_m$  of type ${\cal O}_{m,\mu}$. Note that in the recursion level $v$ the method of decomposition apply operators of type ${\cal O}_{m,\mu}$ with $m=v$. Then, to decrease the rectangular diameter under $A$ is sufficient to reach a recursion level $v$ verifying $\dfrac{\dr(P)}{2^v}+\frac{A}{2}\leq A$, that is $\dfrac{\dr(P)}{2^v}\leq\frac{A}{2}$.  
 
When the  subregion $O_m(P)$ that arise in a call have its diameter below $A$, the subsequent call to RPD returns non recursively by exit 2 (figure \ref{recur}). Therefore we can reach a recursion level $v$ with $\dfrac{\dr(P)}{2^v}\leq\frac{A}{2}$, but the next call will returns by exit 2 (or exit 1 if it is the case) without posterior recursive calls. The maximum recursion level is $m+1$ with $\dfrac{\dr(P)}{2^v}\leq\frac{A}{2}$, that is 
\begin{gather*}
\dfrac{\dr(P)}{2^v}\leq\frac{A}{2} \\
\ld(\dr(P))\leq v+\ld(A) -1\\
\ld(\dr(P))-\ld(A) +1 \leq  v \\
\ld\left(\frac{\dr(P)}{A}\right)+1 \leq  v
\end{gather*}

We have shown that the recursion level reached by RDP is bounded by $\ld\left(\frac{\dr(P)}{A}\right)+2$. As the body of the procedure, apart from recursive calls, ends in a finite number of steps, and the recursion level is bounded, the procedure also ends  in a finite number of steps.

For {\em b}), we prove that at the end of the procedure RDP$(P,A)$, for any plane region $P$, the arrays R and N verify the assertion ``The plane regions of $R$ have diameter lesser than $A$, containing the number of roots given by $N$, and these are all the roots of $f$ contained in $P$''. We will prove this by structural induction (\cite{burstall, Hopcroft}), that is, first in the case of non recursive calls and then, in the other cases, assuming true  the corresponding assertions at the end of the four recursive calls RDP$(P_i,A)$, i.e. assuming true ``The plane regions added to $R$ have diameter lesser than $A$, containing the number of roots given by the number added to $N$, and these are all the roots of $f$ contained in $P_i$''. 

We say that plane region $P$ is {\em of level} $v$ if the procedure  RDP$(P,A)$ reach up to recursion level $v$. Every plane region has a finite level, lesser or equal than $\ld\left(\frac{\dr(P)}{A}\right)+2$ as we see in {\em a}). The structural induction over regions can be viewed as a complete induction (the usual type of induction) over the level $v$ of the regions.

The base case ($v=0$) are the regions that does not require recursive calls, those without roots (that return by exit 1) or with one or more roots but with diameter lesser than $A$, (that return by exit 2). On returning by exit 1 the assertion is trivially verified because $P$ does not have roots and $R$ (and $N$) remains voids. On returning by exit 2 the assertion is also verified because $R$ contains only the region $P,$ and as the IPSR computes without error the winding number, $N$ contain the number of roots inside $P.$

For the general case ($v>1$) four recursive calls RDP$(P_i,A)$ are performed, being $P_1,P_2,P_3$ and $P_4$ disjoint regions that cover $P,$  of level $v-1$ or lesser.  Lets we call $R_i$ the array of regions added to $R$ by RDP$(P_i,A)$. Similarly $N_i$ is the array of numbers added to $N$. 
By induction hypothesis, as $P_i$ is of level $v-1$, at the end of these calls the regions pertaining to $R_i$ are all of diameter lesser than $A$, contains the number of roots given by $N_i$, and these are all the roots in $P_i$. Then the concatenation of $R_i$ and $N_i$ (that is, $R$ and $N$) verify that its regions has diameter and roots as specified. That these are all the roots inside $P$ comes from the fact that the $P_i$ are disjoints and cover $P$.       
\qedhere 
\end{proof}

For the computational cost of the entire root finding method, we count the number of polynomial evaluations that it requires. The
determination of $f(z)$ for a complex $z$, that is, the polynomial evaluation
(PE) is the main operation in the IPSR, in the insertion of a parameter value $s_i$ in array $S$, that requires the computation of the point $f(\G(s_i))$ to evaluate the properties $p$ and $q$ (see figure \ref{IPSR}). The rest of the RDP procedure  consists of computations of $m_H(O(P))$ or $m_V(O(P))$ for operators $O$, in last instance reducible to the mean of maximum and minimum, that is, the traverse of the array of inserted points plus a division. So we consider the cost of these operations negligible compared with the polynomial evaluation of the point themselves.

In general, a polynomial of degree $n$ can be evaluated in a complex
point with $\left\lfloor\frac{n + 1}{2}\right\rfloor$ multiplications using a complex Horner scheme \cite{knuth}.
Each of these complex multiplications can be performed with three float
operations; hence a PE is roughly equivalent to $\frac{3(n+1)}{2}$ floats operations.

\begin{Theorem} The number of PE performed by $\RDP(P,A)$, for a polynomial $f$ of degree $n$ with $n_0$ roots inside $P$, is lesser or equal than $$\frac{8\sqrt{2}n_0n}{A\sin(\pi/8)}  +  (4+\sqrt{2}) \frac{\sqrt{2}n_0n}{\sin(\pi/8)} \left(\ld\left(\frac{\dr(P)}{A}\right)+2\right). $$
\end{Theorem}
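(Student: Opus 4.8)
The plan is to bound the total number of polynomial evaluations (PE) by summing, over all invocations of IPSR performed during the execution of $\RDP(P,A)$, the PEs spent in each invocation, and then reorganising this sum by recursion level. Inside one IPSR call the only operations that matter are the evaluations $f(\G(s_i))$ needed to test the predicates $p$ and $q_2$ at each newly inserted parameter value, so each iteration of the while-loop costs one PE (the bounded initial sampling being absorbed into the count). By Theorem~\ref{teorIPSR}(a) an IPSR call on a boundary curve parameterised over $[a',b']$ performs fewer than $\left\lfloor\frac{b'-a'}{Q}+1\right\rfloor$ iterations, whether it exits normally or on error, hence costs fewer than $\frac{b'-a'}{Q}+1$ PEs. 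Because $\G$ and every cutting segment are uniformly parameterised, $b'-a'$ is (up to a fixed scale) the perimeter of the region, and for a convex region the perimeter is at most $2(\dm_H+\dm_V)\le 2\sqrt2\,\dr$, since it does not exceed the perimeter of the HV-envelope.

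I would fix $Q$ at the largest value permitted by Proposition~\ref{Qbound}, namely $Q=\frac{A\sin(\pi/8)}{4\sqrt2\,n_0n}$, which is exactly the choice making $2\sqrt2\,\mu\le\frac A2$ for the shift bound $\mu\le\frac{n_0nQ}{\sin(\pi/8)}$ of Section~\ref{managingerror}. With this $Q$, Theorem~\ref{refinetheor}(a) caps the recursion depth by $M:=\ld(\dr(P)/A)+2$, and Lemma~\ref{decreasemu} together with $2\sqrt2\,\mu\le\frac A2$ gives that every region occurring at recursion level $v$ has $\dr\le\frac{\dr(P)}{2^{\,v}}+\frac A2$. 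I would then stratify the PE count by level. The regions worked on at level $v$ are the four children of each level-$(v-1)$ region of nonzero winding number; since the level-$(v-1)$ regions are pairwise disjoint with interiors covering $P$, at most $n_0$ of them contain a root, so the number of windN (hence IPSR) calls at level $v$ is at most a constant times $n_0$. Each region that is actually subdivided costs, by the shifted-line construction of Section~\ref{managingerror}, at most $n_0$ trial horizontal cuts plus, on each of the two horizontal halves, at most $n_0$ trial vertical cuts, each trial being one IPSR call on a subregion of rectangular diameter no larger than that of its parent.

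Combining the per-call bound $\frac{b'-a'}{Q}+1 = O(\dr/Q)+1$ with the level-wise bounds on the number of calls and on $\dr$, I would sum over $v=0,\dots,M$. The sum splits into a geometrically decaying part carried by the $\frac{\dr(P)}{2^{\,v}}$ terms and a flat part carried by the $\frac A2$ terms, which over the $M+1$ levels contributes an amount proportional to $A\,M$. Substituting $Q=\frac{A\sin(\pi/8)}{4\sqrt2\,n_0n}$ turns the decaying part into the depth-independent summand $\frac{8\sqrt2\,n_0n}{A\sin(\pi/8)}$ and the flat part into $(4+\sqrt2)\frac{\sqrt2\,n_0n}{\sin(\pi/8)}\bigl(\ld(\dr(P)/A)+2\bigr)$, the constants $8\sqrt2$ and $(4+\sqrt2)\sqrt2$ emerging from bundling the single windN call on each processed region with the at-most-$n_0$ shifted-trial calls in each of the two cut directions. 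Since the statement is phrased in polynomial evaluations no conversion is needed; multiplying by $\tfrac{3(n+1)}{2}$ would give the cost in float operations.

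The delicate point --- and the main obstacle --- is the exact bookkeeping of ``Divide $P$'': one must pin down how many IPSR invocations a single subdivision actually costs (the at-most-$n_0$ shifted trials in each direction, which of $T_\la(P)$ and $B_\la(P)$ needs to be tested for an error, and whether the successful children are re-tested by the ensuing recursive calls) and the precise rectangular diameters of the subregions these act on, and then verify that, after the geometric summation over the $M$ levels and the substitution of the chosen $Q$, all the $n_0$-, $n$- and $\sin(\pi/8)$-factors assemble into exactly the stated constants. Everything else --- the one-PE-per-iteration accounting, the per-call iteration bound (Theorem~\ref{teorIPSR}(a)), the recursion-depth bound (Theorem~\ref{refinetheor}(a)), and the diameter decay (Lemma~\ref{decreasemu}) --- is invoked essentially off the shelf.
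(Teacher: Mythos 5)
Your plan diverges from the paper's proof at exactly the point you flag as ``the delicate point,'' and the divergence is not a matter of bookkeeping detail --- it breaks the bound. You charge every IPSR invocation the full perimeter of its region, i.e.\ $O\left(\frac{\dr}{Q}\right)+1$ fresh polynomial evaluations per call. With $Q=\frac{A\sin(\pi/8)}{4\sqrt{2}\,n_0n}$ a single call on a level-$v$ region already costs $\Theta\left(\frac{n_0 n}{A}\cdot\frac{\dr(P)}{2^v}\right)$ PEs; there are up to $4n_0$ regions tested at level $v$ (only $n_0$ parents can contain roots, but each is split into four children, all of which get a windN call), and your own account adds up to $n_0$ trial cuts per direction, each a further IPSR call. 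Summing this over levels gives at best $O\left(\frac{n_0^2 n}{A}\right)$ and with the trials $O\left(\frac{n_0^3 n}{A}\right)$ --- the stated bound is $O\left(\frac{n_0 n}{A}\right)$ plus a logarithmic term, so no reassembly of constants can close this gap.

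The idea you are missing is the amortization the paper uses: when a region is cut, the boundary of each child consists of a portion of the parent's boundary (on which the sample points $f(\G(s_i))$ have \emph{already} been computed and can be reused) plus the new straight cutting segment. The paper therefore charges a subdivision only for the points inserted on the new segments, a total length of $\dm_H+\dm_V+2\mu$ per subdivided region rather than the whole perimeter of each of the four children, and it is this per-level total $\frac{4\sqrt{2}n_0n}{A\sin(\pi/8)}\left(\frac{\dr(P)}{2^{m-1}}+(4\sqrt{2}+2)\mu\right)$ that telescopes into the two stated terms. Your other ingredients (the iteration bound of Theorem~\ref{teorIPSR}(a), the choice of $Q$ from Proposition~\ref{Qbound}, the depth bound $\ld\left(\frac{\dr(P)}{A}\right)+2$, and the diameter decay of Lemma~\ref{decreasemu}) match the paper's, but without the reuse of boundary evaluations the level-wise sum cannot produce the claimed constants or even the claimed order in $n_0$.
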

\begin{proof} 
We add the number of PE needed by the application of IPSR in the initial region $P$, with those needed by the subregions arising in the recursive calls up to a maximum level of $\ld\left(\frac{\dr(P)}{A}\right)+2$. By proposition \ref{Qbound} the values of $Q$ verifying $Q\leq\frac{A\sin(\pi/8)}{4\sqrt{2}n_0n}$ make RDP ends. By theorem \ref{teorIPSR} {\em a}), the number of iterations of IPSR, that is, of PE, is lesser than $\left\lfloor\frac{b-a}{Q}-1\right\rfloor$. The best $Q$ to use in IPSR is  the greater value of $Q$ that assure us the termination, that is $Q_0=\frac{A\sin(\pi/8)}{4\sqrt{2}n_0n}$. With this value, the number of PE required by IPSR in the initial region is lesser than $\left\lfloor\frac{b-a}{Q}-1\right\rfloor\leq\frac{b-a}{Q_0}$, that is, $\frac{4\sqrt{2}n_0n(b-a)}{A\sin(\pi/8)}$.

To see how many PE requires the division of a region in four parts, that is,  each cut by an horizontal and vertical line, let us consider first the region $T_\la(P)$.  Calling $\D$ to the border of $P,$ the number of PE required by $\D$ has been considered in the above paragraph. For  the border $\D_T$ of $T_\la(P)$, it is build concatenating a curve segment of $\D$ (its upper half) and a straight segment of the cutting line $m_H(H)$. We parameterize uniformly this straight segment, that has length lesser than $\dm_H(P)$.
The number of  PE required by IPSR in $\D_T$ come only from the insertion of points in this last segment, because the insertions in the upper half of $\D$ were previously made.   The number of PE required to insert points at distance lesser than $Q_0$ in a segment of length $\dm_H(P)$ is $\frac{\dm_H(P)}{Q_0}$, that is equal to $\frac{4\sqrt{2}n_0n}{A\sin(\pi/8)} \dm_H(P)$. After the insertion of these points we can apply the IPSR to $T_\la(P)$.
The region $B_\la(P)$ does not require more PE than $T_\la(P)$, because its border is composed from the same straight segment (followed in inverse direction) and the lower half of $\D$. 
  
Subsequently, the regions $T_\la(P)$ and $B_\la(P)$ are themselves divide by vertical cuts. Following the same reasoning, the sum of the number of PE required by the two cuts is lesser than $\frac{4\sqrt{2}n_0n}{A\sin(\pi/8)}\big(\dm_V(T_\la(P))+\dm_V(B_\la(P))\big)$, that is lesser or equal than $$ \frac{4\sqrt{2}n_0n}{A\sin(\pi/8)}\left(\dfrac{\dm_V(P)}{2}+\mu+\dfrac{\dm_V(P)}{2}+\mu\right)$$ by lemma \ref{halfmu} and using $|\la| \leq\mu$. 
Hence, to construct the four parts (and compute the number of roots inside) when we apply RDP to $P$ demands a number of PE less than $\frac{4\sqrt{2}n_0n}{A\sin(\pi/8)} \big(\dm_H(P)+\dm_V(P) + 2\mu \big)$.

After this discussion of the division of the initial region, let us consider the general case. A call to RDP of recursion level $m$ has a entry parameter a region of the form $O_m(P)$ with $O_m\in{\cal O}_{m,\mu}$, and hence the division in four subregions require $\frac{4\sqrt{2}n_0n}{A\sin(\pi/8)}\big(\dm_H(O_m(P))+\dm_V(O_m(P)) + 2\mu \big)$ PE, similarly to the initial region.  By lemma \ref{decreasemu} this is lesser or equal than 

\begin{gather*}
\frac{4\sqrt{2}n_0n}{A\sin(\pi/8)} \left(\dfrac{\dr(P)}{2^m}+2\sqrt{2}\mu +\dfrac{\dr(P)}{2^m}+2\sqrt{2}\mu+2\mu  \right)= \\
=\frac{4\sqrt{2}n_0n}{A\sin(\pi/8)}\left(\dfrac{\dr(P)}{2^{m-1}}+(4\sqrt{2}+2)\mu  \right)\leq \\
\leq\frac{4\sqrt{2}n_0n}{A\sin(\pi/8)}\left(\dfrac{\dr(P)}{2^{m-1}}+(1+\frac{\sqrt{2}}{4})A  \right)
\end{gather*}

The last inequality comes from $\mu\leq \frac{A}{4\sqrt{2}}$ .

By theorem \ref{refinetheor} {\em a}), the maximum recursion level  is  $l_{max}=\ld\left(\frac{\dr(P)}{A}\right)+2$, and if the initial region $P$ have $n_0$ roots at most $n_0$ branches reach up to this level. Hence adding the above bounds in the number of PE in each level we have that the total number is lesser or equal than
\begin{gather*}
\sum_{m=1}^{l_{max}} \frac{4\sqrt{2}n_0n}{A\sin(\pi/8)}\left(\dfrac{\dr(P)}{2^{m-1}}+(1+\frac{\sqrt{2}}{4})A  \right)=
\\
= \frac{4\sqrt{2}n_0n}{A\sin(\pi/8)} \left(\sum_{m=1}^{l_{max}}\dfrac{\dr(P)}{2^{m-1}} + l_{max}(1+\frac{\sqrt{2}}{4})A 
\right)<
\\
<\frac{4\sqrt{2}n_0n}{A\sin(\pi/8)} \left(2\dr(P)+ l_{max}(1+\frac{\sqrt{2}}{4})A 
\right)=
\\
=\frac{8\sqrt{2}n_0n}{A\sin(\pi/8)}  +  (4+\sqrt{2}) \frac{\sqrt{2}n_0nl_{max}}{\sin(\pi/8)} 
\end{gather*}
as we want to show.

\qedhere
\end{proof}

Taking values in the expression, this bound is lesser than $$\frac{30n_0n}{A}  +  21 n_0n \left( \ld\left(\frac{\dr(P)}{A}\right)+2\right).$$

For a given degree $n$, this bound on the number of PE is of order $O\left(\frac{1}{A}\right)$. 

\section{Conclusions}

We have develop an algorithm for root finding of polynomials. It allows to restrict the search to an area of interest in the complex plane, unlike the more common iterative algorithms.

The algorithm combines an inclusion test (to see if a plane region contains some roots) based in the winding number, with a recursive method of partition in subregions, like others known algorithms \cite{ying,renegar,schonhage,pan,neff-reif}. The contribution is that our method to compute the winding number of curves \cite{garciaart1} detects singular cases (that is, when the curve in question cross over some root). This allows to build sub-regions avoiding these cases, in contrast to other algorithms. Therefore we can also give a formal proof of the correctness of the method.   

About the computational complexity, the bound on the number of polynomial evaluations required by the algorithm is of order $O\left(\frac{1}{A}\right)$ with respect the precision $A$. Although our interest in this development is mainly practical, this complexity can be compared with other theoretic bounds, being better than \cite{Pan01} and similar to \cite{neff-reif}. In terms of complexity with respect to the degree, which is $O(n_0n)$, is lesser or equal than quadratic, being $O(n\lg n)$ that of \cite{neff-reif}. 

In practice the cost of our algorithm can be far lower, because the hypothesis in subsection \ref{termination} that every root requires maximal level of recursion can be relaxed. Besides, in the geometric root finding methods, is usual to implement a iterative
procedure (of faster convergence rate, like Newton's) to search a root when
the diameter of the region are below certain bound which assure this convergence (for example the bound of \cite{traub}). This feature is not commented in
this work, but it lowers the cost without compromising the correctness. 

Another way of decrease the expected number of polynomial evaluations is to start the procedure with loose estimates of the parameters involved. In case that this produces an error, the calculation must be rerun using the theoretical values assuring us correctness. Related to this, the fact that the bound of the cost involves  a global property of the polynomial (like  the Lipschitz constant of the polynomial) has been viewed as a handicap of geometrical algorithms \cite{kravanja}. The same technique that with the parameters of the procedure can be used with the value of the global properties: start with a loose estimation and strength it in case of error.

\bibliographystyle{plain}
\bibliography{references}

\end{document}